\newtheorem{theorem}{Theorem}[section]
\newtheorem{teo}[theorem]{Theorem}
\newtheorem{lem}[theorem]{Lemma}
\newtheorem{prop}[theorem]{Proposition}
\newtheorem{coro}[theorem]{Corollary}
\theoremstyle{definition}
\newtheorem{defi}[theorem]{Definition}
\newtheorem{ex}[theorem]{Example}
\newtheorem{rem}[theorem]{Remark}
\DeclareMathOperator{\Hom}{Hom}
\DeclareMathOperator{\HOM}{HOM}
\DeclareMathOperator{\Ker}{Ker}
\def\Im{\mathrm{Im\ }}
\DeclareMathOperator{\id}{id}
\newcommand{\ot}{\otimes}
\newcommand{\wt}{\widetilde}
\def\ra{\overline}
\def\ul{\underline}
\def\A{\mathcal A}
\def\YD{\mathcal {YD}}
\def\I{\mathcal{I}}
\def\BB{\mathcal B}
\def\S{\mathcal S}
\def\D{\mathcal D}
\def\M{\mathcal M}
\def\uM{\underline{\mathcal M}}
\def\K{\mathcal K}
\def\Hom{\mathrm{Hom}}
\def\g{\mathfrak g}
\def\b{\mathfrak b}
\def\B{\mathfrak B}
\def\m{\mathfrak m}
\def\um{\underline{\mathfrak m}}
\def\H{\mathcal H}
\def\Z{\mathbb Z}
\def\N{\mathbb N}
\def\O{\mathcal O}
\def\om{\omega}
\def\To{\Rightarrow}
\def\pf{\begin{proof}}
\def\epf{\end{proof}}
\def\Int{\Lambda}
\def\soc{\mathrm{soc}}
\def\gr{\mathrm{gr}}
\title{Hopfological algebra for infinite dimensional Hopf	algebras}
\author{Marco A. Farinati 
\thanks{Partially supported by  UBACyT 2018-2021, 256BA.
Research member of CONICET - 
I.M.A.S. - Depto. de Matem\'atica,
 F.C.E.y N. 
Universidad de Buenos Aires, 
Ciudad Universitaria Pabell\'on I 
(1428) C.A.B.A.  Argentina, 
 \textit{E-mail address:} \texttt{mfarinat@dm.uba.ar}
}
}
\begin{document}

\maketitle
\noindent 2010 {\em Mathematics Subject Classification:}\,  16T05
16E35, 18G99, 18D99, 19A49, 81R50 
\\
  {\em Keywords:}  Co-Frobenius Hopf Algebras,
Tensor Triangulated Categories,  Homology Theories, $K_0$, 
 Categorification.

\begin{abstract}
We consider "Hopfological" techniques as in 
\cite{Ko} but for infinite dimensional Hopf algebras,
under the assumption of being co-Frobenius. In particular,
$H=k[\Z]\#k[x]/x^2$ is the first example,
 whose corepresentations category is d.g. vector
 spaces. Motivated by this example we define the 
"Homology functor" (we prove  it is homological) 
for any co-Frobenius algebra, with coefficients in $H$-comodules,
that recover usual homology of a complex when $H=k[\Z]\#k[x]/x^2$.
 Another easy example of co-Frobenius Hopf algebra 
 gives the category of  "mixed complexes" and we see (by computing an 
example) that this homology
theory differs from cyclic homology, although there exists a long exact sequence
analogous to the SBI-sequence. Finally, because we have a tensor triangulated category,
its $K_0$ is a ring, and we prove a "last part of a localization exact sequence" for $K_0$
 that allows us to compute -or describe- $K_0$ of some family of examples,
giving light of what kind of rings can be categorified using this techniques.
\end{abstract}


\section*{Introduction}
This paper has mainly 3 contributions:

(1) The "Hopfological algebra" can be developed not only for finite dimensional 
Hopf algebras but also for infinite dimensional ones, provided they are co-Frobenius.
The language of comodules is better addapted than the language of modules.

(2) The formula "$\Im d / \Ker d$" can written in Hopf-co-Frobenius language.

(3) Some  $K$-theoretical results allow us to compute $K_0$ of the 
stable categories associated to  co-Frobenius Hopf algebras of the form $H_0\#\B$, with
$H_0$ cosemisimple and $\B$ finite dimensional.

\

The paper is organized as follows: 
In Section 1 we exhibit point (1) above, in Section 2 we make point (2).
In Section 3 we develop some tools to understand the triangulated structure.
 In Section 4 we exhibit the first examples.
 Section 5 deals with $K_0$. Section 6
 illustrate the first step on how to develop -in the setting 
of co-Frobenius Hopf algebras- the
direction taken in \cite{Qi} for finite dimensional Hopf algebras.

\subsection*{Acknowledgments} I wish to thank Juan Cuadra
for answering  questions and pointing useful references  
on co-Frobenius coalgebras. I also thanks Gast\'on A. Garc\'ia
for helping me with coradical problems. This work was partially supported by
UBACyT 2018-2021, 256BA.

\section{Integrals, Co-Frobenius  and triangulated structure}

$k$ will be a field, $H$ a Hopf algebra over $k$, all comodules are right comodules.
The category of comodules is denoted $\M^H$ and the subcategory of
finite dimensional comodules is denoted $\m^H$.

\subsection{Integrals}
\begin{defi}
(Hochschild, 1965; G. I. Kac, 1961; Larson-Sweedler,
1969).  A  (left) integral is a linear map
$\Int:H\to k$ such that
\[
(\id\ot \Int)\Delta h=\Int(h) 1 \ \forall h\in H
\]
that is, $h_1\Int(h_2)=\Int(h)1$.
\end{defi}
It is well-known that the dimension of the space of (left) integral is
 $\leq 1$. In case $H$ admits a non-zero (left) integral $\Int\in H^*$, $H$ will be called
co-Frobenius. 
The following is well-known, we refer
to \cite{ACE} and \cite{AC} and references therein for the proofs:
\begin{teo}If $H$ is co-Frobenius then, in the category of (say right) $H$-comodules
\begin{enumerate}
\item there exists enough projectives;
\item every finite dimensional comodule is a quotient of a finite dimensional
projective, and embeds into a finite dimensional injective;
\item being projective is the same as being injective.
\end{enumerate}
\end{teo}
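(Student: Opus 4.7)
The plan is to exploit the integral $\Int: H \to k$ to establish a Frobenius-type self-duality on $\M^H$. For any coalgebra the cofree comodules $V \otimes H$ are automatically injective, so $\M^H$ has enough injectives with no hypothesis; the substance of the co-Frobenius assumption is that these same objects turn out to be projective too, and moreover that the indecomposable injectives and projectives coincide.

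The central step, from which all three items follow, is to construct a right-$H$-colinear isomorphism $\phi: H \xrightarrow{\sim} H^{*\mathrm{rat}}$, where $H^{*\mathrm{rat}}$ denotes the rational dual, namely the sum of the finite-dimensional left $H^*$-submodules of $H^*$. A natural candidate is $\phi(h)(x) = \Int(x S(h))$. I would verify colinearity using the defining relation $h_1\Int(h_2) = \Int(h)\,1$ together with the antipode axioms, and bijectivity using the Radford--Sullivan theorem that a non-zero integral on a co-Frobenius Hopf algebra is faithful on each block of the coradical decomposition. Since $H^{*\mathrm{rat}}$ is visibly a sum of finite-dimensional projective right $H$-comodules (the finite-dimensional left ideals cut out by idempotents in finite-dimensional quotients of $H^*$), the isomorphism transports this structure to $H$ itself, yielding a decomposition $H = \bigoplus_i P(S_i)$ where each $P(S_i)$ is finite-dimensional and serves simultaneously as the injective envelope and the projective cover of the simple comodule $S_i$.

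Items (1) and (2) then follow by standard arguments: every comodule is the filtered union of its finite-dimensional subcomodules, and every finite-dimensional $M$ surjects from a finite sum of $P(S_{i_k})$'s covering the top of $M$ and embeds into a finite sum of $P(S_{i_k})$'s containing $\soc(M)$. For (3), every injective embeds as a summand of a cofree comodule $V \otimes H$, which by the decomposition above is a direct sum of the $P(S_i)$'s; Azumaya's version of Krull--Schmidt for direct sums of objects with local endomorphism rings forces any such summand to itself be a direct sum of $P(S_i)$'s, hence projective, and the reverse direction is symmetric. The main obstacle I anticipate is establishing the isomorphism $\phi$ in the infinite-dimensional setting: one has to juggle the interaction between $\Int$ and $S$ carefully and show that $\phi$ truly lands in, and surjects onto, the rational part of $H^*$ despite $H$ being infinite-dimensional. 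This is precisely what the references \cite{ACE,AC} carry out in detail.
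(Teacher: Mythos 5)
The paper does not actually prove this theorem: it is stated as well known and deferred entirely to \cite{ACE} and \cite{AC}, so there is no in-paper argument to compare against. Your sketch reconstructs the standard proof from that literature (Lin's semiperfect coalgebra theory, \cite{DNR} Ch.~5), and its overall architecture --- reduce everything to a decomposition of $H$ into finite-dimensional indecomposable summands that are simultaneously projective and injective, then use local endomorphism rings and Azumaya--Krull--Schmidt for item (3) --- is the right one. Two points, however, need repair. First, the crux of the whole argument is hidden in the phrase ``$H^{*\mathrm{rat}}$ is \emph{visibly} a sum of finite-dimensional projective right $H$-comodules'': projectivity here means projectivity in the category of \emph{rational} $H^*$-modules, not of all $H^*$-modules, and a finite-dimensional left ideal of $H^*$ generated by an idempotent is not obviously projective in that smaller category. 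In the standard treatment this is exactly where the work lies: one first shows (equivalently to co-Frobenius) that injective hulls of simple comodules are finite dimensional, and then uses that the linear dual $(-)^*$ carries finite-dimensional injective left comodules to finite-dimensional projective right comodules; the projective blocks arise as $E(S)^*$, not by inspection of $H^{*\mathrm{rat}}$.

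Second, the claim that each indecomposable summand $P(S_i)$ ``serves simultaneously as the injective envelope \emph{and} the projective cover of the simple comodule $S_i$'' is false as stated: an indecomposable projective-injective has simple socle and simple top, but these are related by a Nakayama permutation which need not be the identity. Already for $H=k[\Z]\#k[x]/x^2$ the injective hull of the trivial comodule $k$ is $\B=k\oplus kx$, whose top is $kg\not\cong k$; so $E(k)=P(kg)$. This does not damage items (1)--(3) --- all that is needed is that every simple occurs as the socle of some finite-dimensional projective-injective block and as the top of some (possibly different) one --- but the statement should be corrected, and the surjection in item (2) should be built from the blocks covering the top of $M$ under the correct indexing. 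With those two repairs (and accepting that the bijectivity of $\phi\colon H\to H^{*\mathrm{rat}}$, in particular its surjectivity, is itself a nontrivial theorem rather than a verification), your outline is the standard proof the paper is citing.
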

If $M$ and $N$ are two objects in a category $\BB$, denote
$\Hom_\BB(M,N)$ set of morphisms and
$I_\BB(M,N)$ the subset of
$\Hom_\BB(M,N)$ consisting on morphisms
 that factors through an injective object of $\BB$. Denote
\[
\ul\Hom_\BB(M,N):=\frac{\Hom_\BB(M,N)}{I_\BB(M,N)}
\]
The category whose objects are $H$-comodules and morphism
$\ul\Hom^H$ is called the {\em stable} category and it is denoted
$\ul\M^H$. Similarly $\ul\m^H$ is the stable category associated to $\m^H$.
 By the above theorem, $\ul\m^H$ is embedded fully faithfully in 
$\ul\M^H$.
With these preliminaries, one can prove the following main construction:
\begin{teo} 
If $H$ is a co-Frobenius Hopf algebra then 
$\uM^H$  has a natural structure of triangulated category,
$\um^H$ is a triangulated subcategory.
\end{teo}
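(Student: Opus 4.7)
The plan is to observe that the previous theorem equips $\M^H$ with the structure of a Frobenius exact category---there are enough projectives, enough injectives, and the two classes coincide---and then invoke Happel's classical result that the stable category of any Frobenius category carries a canonical triangulated structure.

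First I would construct the suspension $T\colon\uM^H\to\uM^H$. For each $M$ fix an embedding $i_M\colon M\hookrightarrow I_M$ into an injective comodule and set $TM:=I_M/i_M(M)$. Using that two embeddings into injectives can be compared up to a map factoring through an injective, one verifies that $TM$ is independent of $i_M$ up to canonical isomorphism in $\uM^H$ and that $T$ extends to a well-defined functor. Dually, the kernel of a surjection from a projective onto $M$ defines $T^{-1}M$; the coincidence projective $=$ injective from clause (3) guarantees that $T$ and $T^{-1}$ are mutually inverse autoequivalences of $\uM^H$.

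Next I would specify the distinguished triangles. Given a short exact sequence $0\to L\xrightarrow{f}M\xrightarrow{g}N\to 0$ in $\M^H$, pushing out $f$ along $i_L\colon L\hookrightarrow I_L$ yields a commutative diagram with exact rows
\[
\xymatrix{
0 \ar[r] & L \ar[r]^{f} \ar@{^{(}->}[d]_{i_L} & M \ar[r]^{g} \ar[d] & N \ar[r] \ar@{=}[d] & 0 \\
0 \ar[r] & I_L \ar[r] & C \ar[r] & N \ar[r] & 0
}
\]
whose bottom row splits because $I_L$ is injective. Composing a section $N\to C$ with the quotient $C\twoheadrightarrow I_L/L=TL$ produces a connecting morphism $\partial\colon N\to TL$ which is well-defined in $\uM^H$, since any two sections differ by a map factoring through $I_L$. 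The triangle $L\to M\to N\xrightarrow{\partial}TL$, together with every triangle isomorphic to it in $\uM^H$, is then declared distinguished.

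The verification of the Verdier axioms (TR1)--(TR4) is a formal consequence of the Frobenius exact structure and follows Happel's argument without modification; the only step that is not mere diagram chasing is the octahedral axiom, where three short exact sequences must be woven together through a simultaneous pushout, and this is where I expect the main technical burden to lie. Finally, the restriction to $\um^H$ uses clause (2) to ensure that $T$ can be constructed using \emph{finite-dimensional} injective envelopes; the pushouts above then preserve finite-dimensionality, so $\um^H$ is stable under $T$, $T^{-1}$, and the formation of distinguished triangles, whence $\um^H$ is a full triangulated subcategory of $\uM^H$.
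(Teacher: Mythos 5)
Your proposal is correct and follows essentially the same route as the paper: verify that $\M^H$ and $\m^H$ are Frobenius exact categories using the preceding theorem, then invoke Happel's Theorem 2.6, with the explicit constructions of $T$, $T^{-1}$ and the distinguished triangles matching what the paper spells out in its subsequent subsections. The only small point the paper makes that you gloss over is that an object of $\m^H$ is injective (resp.\ projective) in $\m^H$ if and only if it is so in $\M^H$, which is what lets $\um^H$ sit inside $\uM^H$ as a triangulated subcategory rather than merely being triangulated on its own.
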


\begin{proof}
We apply directly Happel's Theorem 2.6  of \cite{Ha}. The only thing to do is to notice that 
$\M^H$ (and $\m^H$) are Frobenius exact categories. Using Happel's notation,
let  $\BB$ be
 an additive category embedded as a full and
extension-closed subcategory in some abelian category $ \A$, 
 and $\S$ the set of short exact sequences in
 $\A$ with terms in $\BB$.
 For as, since both 
$\M^H$ and $\m^H$ are already abelian,   we have $\A=\BB$ and the notion
of $\S$-projective and $\S$-injective is the same as usual projectives and injectives.
Maybe we just remark that an object in $\m^H$ is injective in $\m^H$ if and only if it is
 injective in $\M^H$ and similarly for projectives (see Lemma \ref{lemaproy} as an 
illustration). 

An exact category $(\BB,\S)$ is called a Frobenius category if
$(\BB,\S)$ has enough $\S$-projectives and enough $\S$-injectives and if moreover
the $\S$-projectives coincide with the $\S$-injectives. In our case,
$\BB=\M^H$ or $\BB=\m^H$ are clearly Frobenius categories if
$H$ is a co-Frobenius Hopf algebra. 
Theorem 2.6 in \cite{Ha} just state that the stable category 
$\ul\BB$ is triangulated. 
\end{proof}

\begin{lem}
\label{lemaproy}
If $P\in\m^H$ then $P$ is projective in $\m^H$ if and only if 
it is projective in $\M^H$.
\end{lem}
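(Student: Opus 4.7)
The plan is that both implications follow quickly from the theorem on co-Frobenius Hopf algebras stated just before the lemma.

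The direction ``$P$ projective in $\M^H$ $\Rightarrow$ $P$ projective in $\m^H$'' is immediate. Since $\m^H$ is a full subcategory of $\M^H$ and every epimorphism in $\m^H$ is in particular an epimorphism in $\M^H$, any lifting problem posed in $\m^H$ against $P$ is already a lifting problem in $\M^H$, which is solved by the assumed projectivity of $P$ there.

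For the converse, assume $P\in\m^H$ is projective in $\m^H$. By item (2) of the previous theorem (reading ``projective'' in $\M^H$, as the surrounding discussion demands), there is a finite dimensional $Q$ that is projective in $\M^H$ together with an epimorphism $\pi\colon Q\twoheadrightarrow P$. Since $P,Q\in\m^H$, this is also an epimorphism in $\m^H$, so projectivity of $P$ in $\m^H$ allows us to lift $\id_P$ to a section $s\colon P\to Q$ with $\pi\circ s=\id_P$. Hence $P$ is a direct summand of $Q$ in $\M^H$; as direct summands of projectives are projective, $P$ is projective in $\M^H$.

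The only mild subtlety is making sure that the ``finite dimensional projective'' provided by item (2) of the theorem is projective in $\M^H$ and not only in $\m^H$; in the context of the paper this is the natural reading. If one wishes to avoid this, one can argue symmetrically with injective hulls: embed $P$ into a finite dimensional $\M^H$-injective $I$ via item (2), note that $I$ is then also $\M^H$-projective by item (3), and reduce to the proved case. There is no serious obstacle; the lemma is essentially a formal compatibility check that the notion of projective agrees on $\m^H$ whether it is tested in $\m^H$ or in the ambient $\M^H$.
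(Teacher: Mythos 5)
Your proof is correct, but it takes a genuinely different route from the paper's. For the nontrivial direction the paper solves the lifting problem directly: given an epimorphism $Z\twoheadrightarrow Y$ in $\M^H$ and $f:P\to Y$, it replaces $Y$ by the finite dimensional image $f(P)$ and $Z$ by a finite dimensional subcomodule containing chosen preimages of generators (using the local finiteness of comodules, i.e.\ that every finite subset of a comodule lies in a finite dimensional subcomodule), and then applies projectivity of $P$ in $\m^H$ to the resulting finite dimensional diagram. That argument is self-contained and in fact uses nothing about $H$ beyond it being a coalgebra. You instead invoke item (2) of the preceding theorem to produce a finite dimensional $\M^H$-projective $Q\twoheadrightarrow P$, lift $\id_P$ inside $\m^H$, and conclude that $P$ is a retract of $Q$; this is shorter but leans on the cited co-Frobenius theorem and on reading its ``projective'' as $\M^H$-projective (which is indeed the intended reading, since the theorem is stated in the ambient category of all comodules). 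One caveat: your fallback via injective hulls does not work as written --- an embedding $P\hookrightarrow I$ with $I$ projective in $\M^H$ does not exhibit $P$ as a direct summand of $I$ unless you already know $P$ is injective in $\m^H$, which is not among your hypotheses and would require an argument of its own. Since your main line of reasoning stands on its own, this side remark should simply be dropped or replaced by the paper's direct argument.
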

\begin{proof}
If $P$ is projective in $\M^H$ then then it has the lifting property for all comodules, in
 particular for the finite dimensional ones. Assume $P$ is projective in $\m^H$ and consider a
 diagram of comodules
\[
\xymatrix@-1ex{
Z\ar@{->>}[r]^\pi&Y\\
&P\ar[u]^f
}\]
where $Z$ and $Y$ are not necessarily finite dimensional.
Since $P$ is finite dimensional, one can consider
 $Y'=f(P)\subseteq Y$, clearly $Y'$ is a finite dimensional comodule,
with generators say $\{y_1,\dots,y_n\}$. Since $\pi$ is surjective, 
one may found $z_i$ ($i=1,\dots,n$) with
$\pi(z_i)=y_i$ and {\em there exists a finite dimensional} subcomodule $Z'\subseteq Z$
containing all $z_i$'s, hence we have a diagram
\[
\xymatrix{
Z'\ar@{->>}[r]^{\pi|_{Z'}}&Y'\\
&P\ar[u]^f\ar@{-->}[ul]^{\exists \ra f}
}\]
Now all comodules are finite dimensional, and because $P$ is projective within finite
 dimensional comodules, there exists a lifting $\ra{f}:P\to Z'\subseteq Z$ of $f$,
hence, a lifting of the original $f$.
\end{proof}

For clarity of the exposition we recall the 
definition of suspension,  desuspension and triangles in $\uM^H$.
For  this particular case
of comodules over a co-Frobenius Hopf algebra, the general definitions
can be more explicitly realized. Moreover, for $H=H_0\#\B$ as in Section
\ref{H0B}, concrete  and functorial constructions can be done
in $\m^H$. The reader familiar with Happel's results may go directly 
to Section 2.

\subsection{Suspension  and desuspension functors}

In \cite{Ko}, when $H$ is finite dimensional and $\Int$ is an integral in $H$ (not in $H^*$),
the author embeds an $H$-module $X$ via  $X\ot \Int\subset X \ot H$ and define $T(X)$ as
$(X\ot H)/(X\ot \Lambda)$. For us,  $\Int\in H^*$ and this definition makes no sense, but
(even without using the integral) one can always embed  an $H$-comodule $M$ 
into $M\ot H$ by means
of its structural map. The structure map $\rho$ is $H$-colinear provided we use the 
(co)free $H$-comodule structure on $M\ot H$ (and not the diagonal one).

\begin{defi}For a right $H$ comodule $M$ with structure $\rho:M\to M\ot H$,
 define
\[
T(M):=(M\ot H)/\rho(M)\]
\end{defi}
If $H$ is finite dimensional this definition also makes sense in $\m^H$. If
$H$ is co-Frobenius and  $0\neq M$ is  finite dimensional,
$(M\ot H)/\rho( M)$ is not finite dimensional, however,
there exists a finite dimensional 
injective $I(M)$ and a monomorphism $M\to I(M)$, so,
one can define $I(M)/M$ in $\ul \m^H$ and we know $T(M)\cong I(M)/M$ in $\uM^H$.
Moreover, for co-Frobenius Hopf algebras, one can give functorial
embeddings $M\to I(M)$ in $\M^H$ that works in $\m^H$ (see 
Corollary \ref{injfunctorial}).

\begin{rem}
If the notation $M\ot H$ is confusing because $H$ is Hopf and one also has the diagonal 
action, one may consider another injecting embedding:
\[
i_M:M\to M\ot H
\]
\[
m\mapsto m\ot 1\]
This map is clearly an embedding, and it is $H$-colinear if one uses the diagonal action on 
$M\ot H$. Both embeddings are ok because $M\ot H$ with diagonal action and
$M\ot H$ with structure coming only from $H$ are isomorphic (see
Lemma \ref{lemainj}).
\end{rem}

\

Similar (or dually) to  \cite{Ko} one can define desuspension. Consider
the map  $\Int'=\Int\circ S$\[
\Int':H\to k
\]
 Recall that $H$ co-Frobenius implies $S$ is bijective
(see for instance \cite{DNR}) and it is easy to prove that $\Int'=\Int\circ S$ is a
{\em  right} integral:
\[
\Int'(h_1)h_2=
\Int(S(h_1))h_2=
\Int(S(h_1))S^{-1}S(h_2)\]
\[=
S^{-1}\Big(\Int(S(h_1))S(h_2)\Big)=
S^{-1}\Big(\Int(S(h)_2))S(h)_1)\Big)=
S^{-1}\Big(\Int(Sh)1)\Big)=
\Int(Sh)1=
\Int'(h)1\]
We use $\Int'$ because $\Int'$ is right colinear:
\[
\xymatrix{
h\ar@{|->}[d]\ar@{|->}@/^3ex/[rrr]&H\ar[d]^{\rho=\Delta}\ar[r]^{\Int'}&k\ar[d]^{\rho}
&\Int'(h)\ar@{|->}[dr]\\
h_1\!\ot\!  h_2\ar@{|->}@/_3ex/[rrr]&H\!\ot\!  H\ar[r]^{\Int'\!\ot\! \id}&k\!\ot\!  H
&\Int'(h_1)\!\ot\!  h_2=1\!\ot\! \Int'(h_1) h_2=1\!\ot\!  \Int'(h)1\ar@{=}[r]& \Int'(h)\!\ot\!  1
}
\]
Hence, $\Ker(\Int')$ is a right $H$-comodule and we have, for any $M$,  a short
exact sequence
\[
0\to M\ot \Ker(\Int')\to M\ot H\to M\to 0\]
\begin{defi}The desuspension functor is
 $T'(M):=M\ot \Ker(\Int')\in\ul \M^H$
\end{defi}
\begin{rem}
When considering $\m^H$, we know every  finite dimensional comodule $M$
has a finite dimensional projective cover $P(M)\to M$, so we can consider
$T''(M):=\Ker(P(M)\to M)\in\ul \m^H$ and this is isomorphic to $T'(M)$ in the 
stable category. Also (see Corollary \ref{injfunctorial}), one can define $P(M)$
in $\M^H$ and in $\m^H$ in a functorial way.
\end{rem}

As an illustration of the need of stabilization for having a triangulated category 
one see that, for any $M$ a comodule, we have a short exact sequence in $\M^H$
\[
0\to M\to M\ot H\to TM\to 0
\]
In particular, considering $T'M$ instead of $M$,  there is  a short exact sequence
\[
0\to T'M\to T'M\ot H\to TT'M\to 0
\]
But  there is also a short exact sequence
\[
0\to T'M\to M\ot H\to M\to 0
\]
So "$M$ computes $TT'M$ using another injective embedding".  
Usually $TT'M\not\cong M$ in $\M^H$ but $M\cong TT'M$ in $\ul\M^H$.
Similar argument for $T'T$, hence these are mutually inverse functors in the
{\em stable} category, but not in $\M^H$.

\subsection{Triangles}
One of the axioms of triangulated categories is  that any map $f:X\to Y$ is a part
of a triangle $X\overset{f}{\to}Y\to Z\to TX\to$. Triangles are defined via
the mapping cone construction.
 For $f:X\to Y$,  $Co(f)$
is  defined in the following way:

Choose an injective embedding $i:X\to I(X)$ and define $Co(f)$ by the square
\[
\xymatrix{
X\ar[r]^f\ar[d]^i&Y\ar[d]\\
I(X)\ar[r]&Co(f):=I(X)\prod_ X Y
}\]
One can see that this definition does not depend -in the stable category- on the choice 
of the injective embedding $X\to I(X)$. Notice also
a well defined map $Co(f)\to T(X)$ given by the universal property of the push-out:
\[
\xymatrix{
X\ar[r]^f\ar[d]^i&Y\ar[d]\ar@/^2ex/[rdd]^0&\\
I(X)\ar[r]\ar@/_2ex/[rrd]_\pi&I(X)\prod_ X Y\ar@{-->}[rd]&\\
&&I(X)/X}\]

 Triangles $X\to Y\to Z\to TX$ in $\uM^H$ are (by definition)  all
sequences isomorphic (in $\uM^H$) to some sequence of the form
$X\overset{f}{\to} Y\to Co(f) \to T(X)$.
Next two Lemmas emphasize the strong relation between the exact structure of 
$\M^H$ (resp $\m^H$) and the triangulated structure of
$\ul\M^H$ (resp $\ul\m^H$)

\begin{lem}\label{lema1}
If
$ \xymatrix@-2ex{
0\ar[r]& X\ar[r]^u& Y\ar[r]^\pi& Z\ar[r]& 0}
$
is a short exact sequence in $\M^H$ then the sequence $X\to Y \to Z$
is isomorphic to $X\to Y\to Co(u)$ in the stable category.
\end{lem}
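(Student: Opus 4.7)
The plan is to exhibit the isomorphism between the two three-term sequences in the stable category by writing $Co(u)$ explicitly as a pushout and producing a comparison map to $Z$ whose kernel is the chosen injective $I(X)$, which is stably zero.

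First, I would recall that $Co(u)$ is the pushout $I(X)\coprod_X Y$, so concretely $Co(u)=(I(X)\oplus Y)/N$ with $N=\{(i(x),-u(x)):x\in X\}$. Since $\pi\circ u=0$, the pair of morphisms $\pi:Y\to Z$ and $0:I(X)\to Z$ agree on $X$, and therefore factor through the pushout to give a canonical map
\[
\varphi:Co(u)\longrightarrow Z,\qquad [a,y]\longmapsto \pi(y).
\]
By construction the composition $Y\to Co(u)\xrightarrow{\varphi} Z$ is precisely $\pi$, so the only thing left to check is that $\varphi$ becomes an isomorphism in $\ul\M^H$.

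Next I would compute $\ker\varphi$ directly inside $\M^H$. Given $[a,y]$ with $\pi(y)=0$, exactness of the original sequence yields $y=u(x')$ for a (unique, since $u$ is mono) $x'\in X$; adding the relation $[i(x'),-u(x')]=0$ one rewrites $[a,y]=[a+i(x'),0]$. Hence every kernel element is represented by $[b,0]$ with $b\in I(X)$, and $[b,0]=0$ forces $b=i(x)$ with $u(x)=0$, i.e.\ $b=0$. Therefore the inclusion $I(X)\hookrightarrow Co(u)$, $b\mapsto [b,0]$, identifies $\ker\varphi$ with $I(X)$, producing a short exact sequence
\[
0\longrightarrow I(X)\longrightarrow Co(u)\xrightarrow{\ \varphi\ } Z\longrightarrow 0
\]
in $\M^H$. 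The surjectivity of $\varphi$ is immediate from that of $\pi$.

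Finally, since $I(X)$ is injective this sequence splits in $\M^H$, giving $Co(u)\cong I(X)\oplus Z$. Injective objects become zero in $\ul\M^H$, so $\varphi$ is an isomorphism in the stable category. Combined with the identity on $X$ and $Y$, this provides the required isomorphism of three-term sequences $(X\to Y\to Z)\cong(X\to Y\to Co(u))$ in $\ul\M^H$, and independence from the choice of $i:X\to I(X)$ follows from the remark made when $Co(u)$ was introduced. The only step requiring any care is the kernel computation of $\varphi$, but it is a routine pushout manipulation using injectivity of $u$; everything else is formal.
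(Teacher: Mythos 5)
Your proof is correct, and it shares its backbone with the paper's argument: both start from the canonical map $\varphi:Co(u)\to Z$ induced by the pushout property, and both reduce everything to the fact that the ``defect'' of $\varphi$ is a copy of the injective $I(X)$. Where you diverge is in how the stable invertibility of $\varphi$ is established. The paper constructs an explicit candidate inverse $Z\to Co(u)$: it uses injectivity of $I(X)$ to extend the embedding $i:X\to I(X)$ to a map $U:Y\to I(X)$, sends $\pi(y)\mapsto \ra{(-U(y),y)}$, checks that one composite is the identity on the nose, and that the other composite is an idempotent whose complementary piece is supported on $\Ker\cong I(X)$, hence vanishes stably. You never write down the inverse; instead you compute $\Ker\varphi\cong I(X)$ directly, obtain the short exact sequence $0\to I(X)\to Co(u)\to Z\to 0$, and invoke the splitting (again from injectivity of $I(X)$) to conclude. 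Your route is slightly more economical, with one small point worth making explicit rather than summarizing as ``injectives become zero in the stable category'': a splitting $s:Z\to Co(u)$ satisfies $\varphi s=\id_Z$ exactly, and $s\varphi-\id_{Co(u)}$ has image inside $\Ker\varphi= I(X)$, so it factors through an injective and $s$ is a genuine two-sided inverse of $\varphi$ in $\ul\M^H$. With that sentence added, the two proofs are of essentially equal length and rigor; yours has the minor advantage of not needing the auxiliary extension $U$, the paper's has the advantage of exhibiting the stable inverse explicitly, which is occasionally useful later when chasing diagrams in the stable category.
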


\begin{proof}
We assume $Z=Y/u(X)$.
Consider the diagram
\[
 \xymatrix@-1ex{
0\ar[r]& X\ar@{=}[d]\ar[r]^u& Y\ar[r]^\pi\ar@{=}[d]& Y/u(X)\ar[r]& 0\\
& X\ar[r]^u& Y\ar[r]^v& Co(u)&
}
\]
Let $X\to I(X)$ be an embedding into an injective object, for simplicity
we assume $X \subseteq I(X)$.
We define the map 
\[
I(X)\oplus Y\longrightarrow Z\]
\[
\hskip 1cm (e,y)\mapsto \pi(y)
\]
It has the property that, for any $x\in X$,
\[
(-x,u(x))\mapsto \pi(u(x))=0
\]
So, it induces a well defined map
\[
Co(u)=\frac{I(X)\oplus Y}{(x,0) \sim (0,u(x))}\longrightarrow Z\]
\[
\hskip 2cm \ra{(e,y)}\mapsto \pi(y)
\]
Now from the injectivity of $I(X)$ we know there exists a map fitting into the diagram
\[
 \xymatrix{
0\ar[r]& X\ar[d]\ar[r]^u& Y\ar[r]^\pi\ar@{-->}[ld]_ U& Y/u(X)\ar[r]& 0\\
&I(X)
}\]
So, define the map
\[
Y\to Co(u)
\]
\[
y\mapsto \ra{(U(y),y)}
\]
It has the property
\[
u(x)\mapsto \ra{(-U(u(x)),u(x))}=\ra{(-x,u(x))}=0
\]
so, it induces a well defined map
\[
Z=Y/u(X)\to Co(u)
\]
One composition is the identity:
\[
Z\to Co(u)\to Z
\]
\[
z=\pi(y)\mapsto \ra{(-U(y),y)}\mapsto \pi(y)=z
\]
The other composition is
\[
 Co(u)\to Z\to Co(u)
\]
\[
\ra{(e,y)}\mapsto \pi(y)\mapsto \ra{(-U(y),y)}
\]
so, the Kernel is
\[
\{\ra{(e,y)} : y\in u(X)\}\cong
\frac{I(X)\oplus u(X)}
{(x,0) \sim (0,u(x))}\cong I(X)
\]
that is an injective comodule, so, these morphisms
are mutually  inverses in $\uM^H$.
\end{proof}

The second Lemma is a useful one, maybe it is folklore but it is not 
usually written:
\begin{lem}\label{remsec}\label{lema2}
If $\xymatrix@-2ex{X\ar[r] &Y\ar[r] & Z\ar[r] &TX}$ is a triangle in
 the stable category
then there exists a short exact sequence
$0\to X'\to Y'\to Z'\to 0$ in $\M^H$ such that the sequence
$\xymatrix@-2ex{X\ar[r]&Y\ar[r] & Z}$  is isomorphic to
$\xymatrix@-2ex{X'\ar[r] &Y'\ar[r] & Z'}$ in 
the stable category.
\end{lem}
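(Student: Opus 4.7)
The plan is as follows. By the definition of triangles in $\uM^H$, every triangle is isomorphic (as a triangle) to one of the form $X\overset{f}{\to} Y\to Co(f)\to TX$ for some morphism $f:X\to Y$ in $\M^H$. So we may assume the given triangle is already of this form and, forgetting the fourth term, look for a short exact sequence in $\M^H$ isomorphic in $\uM^H$ to $X\overset{f}{\to} Y\to Co(f)$.

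The idea is to replace $f$ by a genuinely injective morphism representing the same class in $\uM^H$. Fix an injective embedding $i_X:X\hookrightarrow I(X)$ (the very one used in constructing $Co(f)$), set $Y':=Y\oplus I(X)$, and define $\phi:X\to Y'$ by $\phi(x):=(f(x),i_X(x))$. Since $i_X$ is injective, so is $\phi$, and setting $Z':=Y'/\phi(X)$ produces a short exact sequence
\[
0\to X\overset{\phi}{\to} Y'\overset{\pi}{\to} Z'\to 0
\]
in $\M^H$.

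The candidate isomorphism between this sequence and $X\overset{f}{\to} Y\overset{g}{\to} Co(f)$ in $\uM^H$ is: $\id_X$ on the left; the projection $p_Y:Y\oplus I(X)\to Y$ in the middle, which is an isomorphism in $\uM^H$ because $I(X)$ is injective and hence zero in the stable category; and on the right, the map induced by $(y,e)\mapsto(-e,y)\in I(X)\oplus Y$, which descends to a map $Z'\to Co(f)=(I(X)\oplus Y)/\langle(i_X(x),-f(x))\rangle$ because $(f(x),i_X(x))\mapsto -(i_X(x),-f(x))$, and is clearly an isomorphism of comodules.

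The left square commutes on the nose: $p_Y\circ\phi=f$. The substantive step is verifying the right square in $\uM^H$: a direct computation shows that the difference of the two maps $Y'\to Co(f)$ is, up to sign, the composition $Y'\twoheadrightarrow I(X)\hookrightarrow Co(f)$ (projecting onto the second summand of $Y'$ and including into $Co(f)$), which factors through the injective object $I(X)$ and therefore belongs to $I_\BB(Y',Co(f))$. The only real obstacle is the careful bookkeeping of signs between the pushout convention defining $Co(f)$ and the cokernel convention defining $Z'$; once the signs are fixed consistently, the verification is routine.
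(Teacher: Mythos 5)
Your proof is correct, but it takes a genuinely different route from the paper's. The paper invokes the rotation axiom to rewrite the triangle as $T^{-1}Z\to X\to Y\to Z$, identifies it with a standard cone triangle $A\overset{u}{\to}B\to Co(u)\to TA$, and then simply observes that the \emph{last three} terms $B\to I(A)\prod_A B\to I(A)/A$ already form a short exact sequence in $\M^H$ (the pushout of the monomorphism $A\hookrightarrow I(A)$ along $u$ is a monomorphism with cokernel $I(A)/A$). You instead work with the \emph{first three} terms $X\overset{f}{\to}Y\to Co(f)$ and force $f$ to become a monomorphism by inflating the middle term to $Y'=Y\oplus I(X)$ via $\phi=(f,i_X)$; the price is that the right-hand square only commutes up to a map factoring through $I(X)$, which you correctly identify and discharge. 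Each approach has its advantages: the paper's is shorter and makes the exactness essentially tautological once the triangle is rotated, while yours avoids the rotation axiom entirely, keeps $X'=X$ on the nose, and makes explicit exactly where the passage to the stable category is needed. Both versions yield the paper's subsequent remark that the short exact sequence can be taken in $\m^H$ when the objects are finite dimensional, since in that case $I(X)$ (resp.\ $I(A)$) can be chosen finite dimensional. One small presentational point: when you say ``we may assume the triangle is already of the form $X\overset{f}{\to}Y\to Co(f)\to TX$,'' you should note that $f$ is a priori only a morphism in the stable category, so you must first lift it to an honest colinear map in $\M^H$ (which is always possible since stable Hom is a quotient of Hom) before forming $\phi$; this is harmless but worth a sentence.
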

\begin{proof}
One of the axioms of triangulated categories says that
 $\xymatrix@-2ex{X\ar[r] &Y\ar[r] & Z\ar[r] &TX}$ is a triangle
if and only if
 $\xymatrix@-2ex{T^{-1}Z\ar[r]& X\ar[r] &Y\ar[r] & Z}$ is so. Hence, 
 $\xymatrix@-2ex{T^{-1}Z\ar[r]& X\ar[r] &Y\ar[r] & Z}$ is
 isomorphic to a distinguished triangle, that is, there is an isomorphism 
(in the stable category) of t-uples
\[
\xymatrix@-2ex{
T^{-1}Z\ar[r]\ar[d]_\cong& X\ar[r]\ar[d]_\cong &Y\ar[r]\ar[d]_\cong & Z\ar[d]_\cong\\
A\ar[r]^u&B\ar[r]&Co(u)\ar[r]&T(A)
}\]
In particular, there is a commutative diagram in the stable category
\[
\xymatrix@-2ex{
 X\ar[r]\ar[d]_\cong &Y\ar[r]\ar[d]_\cong & Z\ar[d]_\cong\\
B\ar[r]&Co(u)\ar[r]&T(A)
}\]
and clearly $0\to B\to Co(u)\to T(A)\to 0$  -or equivalently
\[
0\to B\to I(A)\prod_AB\to I(A)/A\to 0,
\] 
 is a short exact sequence in $\M^H$.
Notice that if $A$ and $B$ are finite dimensional,  one can find a finite dimensional
injective hull $I(A)$ and hence the short exact sequence also belongs to $\m^H$.
\end{proof}

\section{Integrals and coinvariants \label{exint}}
If $C$ is a coalgebra and $M$ a {\em  right} $C$-comodule then 
$M$ is a {\em  left} $C^*$ module via
\[
\phi\cdot m:=\phi(m_1)m_0\hskip 1cm (\phi\in C^*,\ m\in M)
\]
where, as usual, if $M$ is a right $H$-comodule, we denote $\rho:M\to M\ot H$ its
 structural map and we use  Seedler-type notation
$\rho(m)=m_0\ot m_1\in M\ot H$.
In particular, for $C=H$ and $\phi=\Int\in H^*$,
 being left integral means $\Lambda\cdot h = \Lambda(h)1$.
Moreover,  multiplication by $\Int$ in $M$ has the following standard 
and main property:
\[
\rho(\Int \cdot m)=\rho(\Int(m_1)m_0)
=\Int(m_2)m_0\ot m_1
=m_0\ot \Int(m_2)m_1
=m_0\ot \Int(m_1)1
= (\Int\cdot m)\ot 1
\]
That is, $\fbox{
$\Lambda\cdot M\subseteq M^{coH}$
}$.  We list some examples, keeping in mind the above formula.
\subsection*{Examples}
\begin{enumerate}
\item If  $H$ is co-semisimple 
(e.g. 
$H=\O(G)$ with  $G$ an affine reductive group)
then the inclusion $k\to H$ split as $H$-comodules. One 
can check that an $H$-colinear splitting is an integral.
In the cosemisimple case, the inclusion
$\Lambda\cdot M\subseteq M^{coH}$ is actually an equality (this will
be clear in Subsection \ref{int}).
Nevertheless, the integral may not be so explicitly described.
 An easy and explicit example is:
 \item  If
$G$ is a finite group and $H=k^G$, then $\Int=\sum_{g\in G}g\in k[G]\cong (k^G)^*$
is an integral. For any $f\in k^G$:
\[
\Int(f)=\sum_{g\in G}f(g)\]
Actually, every  {\em finite dimensional} Hopf algebra is (Frobenius and)  co-Frobenius.
Notice that $k^G$ is co-semisimple if and only if $k[G]$ is semisimple, if and
only if  the characteristic of the ground field does not divide the order of $G$.

\item Let 
$G$ be a group (possibly infinite, e.g. $G=\Z$) and  $H=k[G]$, define
\[
\Int(\sum_{g\in G}\lambda_g g):=
\lambda_{1_G} \]
A right $H$-comodule $M$ is the same as $G$-graded vector space 
$M=\oplus_{g\in G}M_g$.
The action of $\Int$ gives  the projection into $M_{1_G}$.

\item Tensor product of co-Frobenius algebras is co-Frobenius, the integral can be
 computed using tensor products of integrals. 

\item Let $H$ be a Hopf algebra and $H_0$ its coradical. Notice
 that $H_0$ does not need to be a Hopf subalgebra in general. Nevertheless, one
of the main results in \cite{ACE} is that $H$ is co-Frobenius {\em if and
only if}  the coradical filtration is finite. A particular case is illustrated in the following:

\item \label{exmix}
 Let $H_0$ be a cosemisimple Hopf algebra and let $V\in{}_{H_0}\YD^{H_0}$
be a finite dimensional Yetter-Drinfel'd module such that
its Nichols algebra 
$\B=\B(V)$ is finite dimensional. Then $H=H_0\#\B$ is co-Frobenius. 
The integral is essentially given
by the "volume form", or "Fermionic integration" in $\B$ (see Remark \ref{remP}).
\begin{enumerate}
\item The simplest example is: $H$ generated by $x$ and $g^{\pm 1}$ with 
relations $x^2=0$ and $gx=-xg$.  Comultiplication given by
\[
\Delta g=g\ot g\]
\[
\Delta x=x\ot g+1\ot x\]
The antipode is
\[
S(g)=g^{-1},\
S(x)=-xg^{-1}=g^{-1}x\]
We have  $H\cong k[\Z]\#k[x]/x^2$.
An element of $h$ may be uniquely written as 
\[
h=\sum_{n\in\Z}a_ng^n+
\sum_{n\in\Z}b_ng^nx
\hskip 2cm (a_n,b_n\in k)
\]
A left  integral is given by 
\[
\Int(h):=b_{0}
\]
{This particular example motivates  all definitions of this paper.}
The second simplest example of this kind is the following:
\item $H$ generated by $x,y$ and $g^{\pm 1}$ with relations $x^2=0=y^2$, $xy=-yx$,
$gx=-xg$, $gy=-yg$ and comultiplication given by
\[
\Delta g=g\ot g\]
\[
\Delta x=x\ot g+1\ot x\]
\[
\Delta y=y\ot g^{-1}+1\ot y\]
If we write an element $h\in H$ as
\[
h=
\sum_{n\in\Z}a_ng^n
+\sum_{n\in\Z}b_ng^nx
+\sum_{n\in\Z}c_ng^ny
+\sum_{n\in\Z}d_ng^nxy
\]
then a left integral is given by $\Int(h)=d_0$.
We will compute some invariants of the (stable) comodule category associated to 
this $H\cong k[\Z]\#\Lambda(x,y)$.

\end{enumerate}
\end{enumerate}

One of the main goals of this paper is to translate into Hopf-co-Frobenius language 
the notion of homology "$\Ker d/ \Im d$". The definition
is very natural:

\subsection{Hopf homology for algebras with a non-zero integral \label{int}}

\begin{defi}
Given a co-Frobenius Hopf algebra $H$ and $M\in\M^H$,
denote
\[
\fbox{\fbox{$\H_0^H(M):=\dfrac{M^{coH}}{\Int\cdot M}$}}
\]
For $n\in\N$, we define
\[
\H_{-n}^H(M):=
\H_0^H(T^nM)
\]
and
\[
\H_{n}^H(M):=
\H_0^H(T'{}^nM)
\]
\end{defi}

\begin{ex}
If $M=k$ and $H$ is  co-Frobenius with $\Int(1)= 0$, 
then $\Int \cdot k=0$, hence $\H_0^H(k)=k$ and
the functor $\H_ 0^H$ is non trivial.
\end{ex}

\begin{ex}
For $M=H$, $\Int\cdot H =k1_H=H^{co H}$
$\To$  $\H_ 0^H(H)=0$.
\end{ex}

\begin{ex} The condition ``$M^{coH}/\Int\cdot M=0$'' is stable under arbitrary 
direct sums and direct 
summands, so
$I^{coH}/\Int\cdot I=0$ for any injective module $I$.
\end{ex}
As a corollary,  
if $f:M\to N$ is an $H$-colinear map such that if factors through an injective:
\[
\xymatrix{
M\ar[rr]^f\ar[rd]&&N\\
&I\ar[ru]
}
\]
then the induced map
\[
\xymatrix{
\H_ 0^H(M)\ar[rr]^{\H_ 0^H(f)}\ar[rd]&&\H_ 0^H(N)\\
&\H_ 0^H(I)\ar[ru]\ar@{=}[r]&0
}
\]
is necessarily zero. So, the functor $\H_ 0^H(-)$ is actually defined
in the stable category 
\[
\H_ 0^H:\ul \M^H\to {}_kVect
\]
\begin{rem}
For all $n\in\Z$, the functors $\H_n^H(-)$ are defined in the stable category.
\end{rem}

\begin{coro}
If $H$ is co-semisimple then  every $H$-comodule is injective, hence
$\H_ 0^H(M)=0$ for all  comodule $M$. In other words, 
$\Int\cdot M=M^{coH}$ for all comodule $M$.
\end{coro}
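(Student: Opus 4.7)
The plan is to derive the vanishing directly from the example recorded immediately before the statement, which says $\H_0^H(I)=0$ for any injective comodule $I$, together with the classical fact that co-semisimplicity of $H$ makes every comodule injective.

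First I would recall the classical equivalence: $H$ is co-semisimple if and only if $\M^H$ is a semisimple abelian category, equivalently every short exact sequence in $\M^H$ splits (see e.g.~\cite{DNR}). In particular every $M\in\M^H$ is then injective, since any monomorphism $M\hookrightarrow N$ admits a retraction.

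Next I would use that $\H_0^H$ descends to the stable category $\ul\M^H$, as noted just before the corollary. Because $M$ itself is injective, the identity $\id_M\colon M\to M$ trivially factors through an injective object, so $\id_M\in I_{\M^H}(M,M)$ and hence $\id_M=0$ in $\ul\M^H$. Applying the functor $\H_0^H$ then gives $\id_{\H_0^H(M)}=0$, which forces $\H_0^H(M)=0$. Unpacking the definition $\H_0^H(M)=M^{coH}/\Int\cdot M$, this reads $\Int\cdot M=M^{coH}$, which is the second assertion.

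I do not expect any real obstacle; the only substantive input is the equivalence between co-semisimplicity of $H$ and semisimplicity of $\M^H$. As a sanity check one could alternatively argue more concretely: using a Peter--Weyl-type decomposition $M=M^{coH}\oplus M'$ with $M'$ the sum of non-trivial isotypic components, and observing that $\Int$ acts as the projection onto $M^{coH}$ and vanishes on $M'$ (because on each non-trivial simple subcoalgebra $\Int$ vanishes, by the cosemisimple decomposition of $H$ and the integral property). Either route is short, and the stable-categorical argument has the advantage of reusing only machinery already set up in the paper.
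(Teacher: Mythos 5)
Your proposal is correct and is essentially the paper's (implicit) argument: the corollary is stated as an immediate consequence of co-semisimplicity forcing every comodule to be injective, combined with the preceding observation that $\H_0^H$ vanishes on injectives. Your stable-category phrasing via $\id_M=0$ in $\ul\M^H$ is a harmless repackaging of the same point, and your Peter--Weyl sanity check matches the remark in the paper's Example~1 that the equality $\Int\cdot M=M^{coH}$ in the cosemisimple case ``will be clear in Subsection \ref{int}.''
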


\begin{rem}
From the point of view of invariant theory, 
$\Int\cdot M=M^{coH}$ is the most convenient situation, but from
the point of view of homological algebra,
$\H_ 0^H(M)\neq 0$ is  most interesting.
\end{rem}

\begin{lem}\label{propepi}
Let $H$ be a Hopf algebra with nonzero integral $\Int$ and denote $\ul \Hom^H$
the Hom space in the stable category of $H$ comodules, then there
 exists an {\bf epimorphism}
\[
\ul\Hom^H(k,M)\to \H_ 0^H(M)
\]
\end{lem}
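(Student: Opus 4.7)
The plan is to exhibit the epimorphism very concretely, starting from the standard identification $\Hom^H(k,M)\cong M^{coH}$ via $f\mapsto f(1)$, and then to check that the subspace $I^H(k,M)$ lands in $\Lambda\cdot M$ under this identification, so that the projection $M^{coH}\twoheadrightarrow M^{coH}/\Lambda\cdot M$ descends to the quotient.

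First I would observe that an $H$-colinear map $f:k\to M$ is determined by $f(1)$, and that $H$-colinearity of $f$ is equivalent to $f(1)\in M^{coH}$. This gives a natural $k$-linear isomorphism $\Hom^H(k,M)\cong M^{coH}$, and composing with the canonical projection yields a surjection
\[
\Hom^H(k,M)\;\longrightarrow\;\frac{M^{coH}}{\Lambda\cdot M}=\H_0^H(M).
\]
To show this factors through $\ul\Hom^H(k,M)$, the key step is to verify that any $f:k\to M$ factoring through an injective $H$-comodule has $f(1)\in\Lambda\cdot M$.

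For that, suppose $f=g\circ h$ with $h:k\to I$, $g:I\to M$ and $I$ an injective comodule. Then $h(1)\in I^{coH}$. By the example just recalled (stability of $M^{coH}=\Lambda\cdot M$ under direct sums and summands, applied to the injective $I$), we have $I^{coH}=\Lambda\cdot I$, hence $h(1)=\Lambda\cdot i$ for some $i\in I$. Since the $\Lambda$-action is the restriction of the $H^*$-module structure induced by the comodule structure, and $g$ is $H$-colinear, it is in particular $H^*$-linear, so
\[
f(1)=g(h(1))=g(\Lambda\cdot i)=\Lambda\cdot g(i)\;\in\;\Lambda\cdot M.
\]
Therefore the composite vanishes on $I^H(k,M)$ and induces a well-defined surjection $\ul\Hom^H(k,M)\twoheadrightarrow \H_0^H(M)$, which is the desired epimorphism.

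The only subtle point is the identity $I^{coH}=\Lambda\cdot I$ for injective $I$, but this is precisely the content of the third example preceding the lemma, which itself rests on the fact that injectivity is preserved by direct summands together with the calculation for the cofree comodule $H$ (where $\Lambda\cdot H=k\cdot 1_H=H^{coH}$). Thus no additional work is required and the argument is essentially a one-line diagram chase once the identification $\Hom^H(k,M)\cong M^{coH}$ is made; I do not anticipate any genuine obstacle.
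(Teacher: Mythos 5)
Your proof is correct, and its overall strategy coincides with the paper's: identify $\Hom^H(k,M)$ with $M^{coH}$ via $f\mapsto f(1)$ and check that any $f$ factoring through an injective has $f(1)\in\Lambda\cdot M$. The one point of divergence is how the injective $I$ is handled. The paper extends $h:k\to I$ along the unit $\eta:k\to H$ (using that $\eta$ is a monomorphism and $I$ is injective) to reduce to the case $I=H$, and then computes directly $f(1)=b(1)=b(\Lambda\cdot x)=\Lambda\cdot b(x)$ for some $x\in H$ with $\Lambda(x)=1$. You instead invoke the example recorded just before the lemma, namely $I^{coH}=\Lambda\cdot I$ for every injective $I$ (which rests on injectives being direct summands of cofree comodules $V\ot H$ and on $H^{coH}=k1=\Lambda\cdot H$), and then push forward along $g$ using that colinear maps commute with the $H^*$-action. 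Both routes ultimately use the same two facts --- the computation for $H$ itself and the compatibility $g(\Lambda\cdot i)=\Lambda\cdot g(i)$ --- so the difference is minor: the paper's reduction needs only the lifting property of injectives, while yours additionally leans on the structure theory of injective comodules, which the paper has already established at that point. Either version is a complete proof.
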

\begin{proof}
Notice that 
\[
\Hom^H(k,M)\to M^{co H}
\]
\[
f\ \ \mapsto  \ \  f(1)
\]
is an isomorphism. We will show that this map fits into a commutative square
\[
\xymatrix{
\ar@{->>}[d]\ar[r]^{\cong}_{f\longmapsto f(1)} \Hom^H(k,M)&\ar@{->>}[d]M^{co H}\\
\ar@{-->}[r]  \ul\Hom^H(k,M) &M^{co H}/\Int\cdot M
}
\]
Assume $f:k\to M$ factors through an injective object
\[
\xymatrix{
k\ar[rr]^f\ar[rd]&&M\\
&I\ar[ru]
}
\]
then one may consider the unit map $k\overset{\eta}{\to} H$ and  the diagram
\[
\xymatrix{
k\ar[d]_\eta\ar[rr]^f\ar[rd]&&M\\
H\ar@{-->}[r]&I\ar[ru]
}
\]
Since $\eta$ is a monomorphism and $I$ is injective, one may find a
 dashed morphism making a commutative
 diagram, so, it is enough to consider the case $I=H$.
\[
\xymatrix{
k\ar[rr]^f\ar[rd]&&M\\
&H\ar[ru]_b
}
\]
Now if $x\in H$ is such that $\Int(x)=1$, then
\[
f(1)=b(1)=b(\Int(x)1)=b(\Int\cdot x)=\Int\cdot b(x)\in\Int \cdot M
\]
so $f(1)\in\Int\cdot M$. This proves that the induced map
\[
 \ul\Hom^H(k,M) 	\to M^{co H}/\Int\cdot M
\]
is both well-defined,  and clearly surjective.
\end{proof}


\begin{rem}
One may wonder if the epimorphism of the above Lemma is in fact an isomorphism. This will be the
case (see Theorem \ref{teoiso}). For finite dimensional Hopf algebras it is due to You Qi \cite{Q2}, where
he proves actually for finite dimensional Frobenius algebras, in particular for finite dimensional
Hopf algebras. It is not clear for the author how to adapt Qi's arguments
to our case, maybe one can find a simpler proof, but we provide a proof
with some homological machinery first.
\end{rem}

\begin{rem}
Lemmas \ref{lema1} and \ref{lema2}  gives an alternative proof that 
the composition of two consecutive
 morphisms in a triangle is zero (in the stable category), and so, {\em every} functor
defined in the stable category sends triangles to complexes.
For the particular case of $\H_ \bullet^H(-)$, without knowing that it is representable or not,
 we have the expected result:
\end{rem}
\begin{teo} \label{les}
If $X\to Y\to Z\to TX$ is a triangle in the stable category  then there 
is a long exact sequence of vector spaces
\[
\cdots \to  \H_{n+1}^H(Z)\to \H^H_n(X)
 \to \H^H_n(Y)\to \H^H_n(Z)
\to \H^H_{n-1}(X)\to\cdots\]
\end{teo}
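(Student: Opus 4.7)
The plan is to apply Lemma \ref{lema2} to reduce to a short exact sequence in $\M^H$, prove that $\H_0^H$ is middle-exact on such sequences, and then use the rotation axiom of triangulated categories together with the natural identification $\H_n^H(TM)\cong \H_{n-1}^H(M)$ (which follows from $T'T\cong \id$ in $\uM^H$) to propagate middle-exactness into a genuine long exact sequence, whose connecting morphisms are simply induced by the triangle arrow $Z\to TX$.

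More concretely, I first invoke Lemma \ref{lema2} to replace the given triangle $X\to Y\to Z\to TX$ by an isomorphic triangle in $\uM^H$ coming from a short exact sequence $0\to X\to Y\overset{f}{\to} Z\to 0$ in $\M^H$; this is legitimate because $\H_n^H$ is already defined on the stable category. Next, I establish the key technical lemma: for any such short exact sequence the three-term sequence
$$\H_0^H(X)\to \H_0^H(Y)\to \H_0^H(Z)$$
is exact at the middle term. The composition is clearly zero. Conversely, suppose $y\in Y^{coH}$ satisfies $f(y)=\Int\cdot z$ for some $z\in Z$. Lifting $z$ to some $y'\in Y$, the $H$-colinearity of $f$ gives $f(\Int\cdot y')=\Int\cdot f(y')=\Int\cdot z=f(y)$, hence $y-\Int\cdot y'\in\ker f=X$. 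Since both $y$ and $\Int\cdot y'$ lie in $Y^{coH}$, we get $y-\Int\cdot y'\in X\cap Y^{coH}=X^{coH}$, and modulo $\Int\cdot Y$ the class of $y$ therefore lies in the image of $\H_0^H(X)$.

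To assemble the long exact sequence, for each $n\in\Z$ I apply $T^{-n}$ to the original triangle, obtaining another triangle which (again by Lemma \ref{lema2}) is isomorphic in $\uM^H$ to a short exact sequence. The middle-exactness lemma applied to $\H_0^H$ then yields exactness of $\H_n^H(X)\to \H_n^H(Y)\to \H_n^H(Z)$ at the middle term. Rotating the triangle once (to $Y\to Z\to TX\to TY$) and twice (to $Z\to TX\to TY\to TZ$), and shifting each rotation by the appropriate power of $T^{-1}$, supplies exactness at $\H_n^H(Z)$ and at $\H_n^H(X)$ respectively. The connecting morphism $\H_{n+1}^H(Z)\to \H_n^H(X)$ is the one induced by the triangle map $Z\to TX$ under the canonical identification $\H_{n+1}^H(TX)\cong \H_n^H(X)$, which comes from $T'T\cong\id$ in $\uM^H$.

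The main obstacle is the middle-exactness lemma, but as sketched it is a short diagram chase that exploits precisely the $H$-colinearity relation $f(\Int\cdot y')=\Int\cdot f(y')$. A minor bookkeeping point is the naturality of $\H_n^H(TM)\cong \H_{n-1}^H(M)$, but this is forced by the structure of $T$ and $T'$ as mutually quasi-inverse functors in the stable category, so no real difficulty is hidden there.
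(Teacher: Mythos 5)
Your proposal is correct and follows essentially the same route as the paper: reduce via Lemma \ref{lema2} to a short exact sequence in $\M^H$, prove middle-exactness of $\H_0^H$ there, and then propagate by rotating and shifting the triangle. The only cosmetic difference is that the paper obtains the middle-exactness by applying the snake lemma to the diagram formed by multiplication by $\Int$ from the short exact sequence onto the (left exact) sequence of coinvariants, whereas you carry out the equivalent element chase by hand.
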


\begin{proof}
We will prove that 
\[
 \H^H_0(X)\to 
 \H^H_0(Y)\to \H^H_0(Z)
\]
is exact in $\H_ 0^H(Y)$ when $0\to X\to Y\to Z\to 0$ is a short exact sequence. 
The general result 
follows from Lemma \ref{remsec} and the shifting axiom of triangles.
So assume $0\to X\to Y\to Z\to 0$ is a short exact sequence in $\M^H$, then
multiplication by the integral gives as a commutative diagram (of vector spaces)
with exact rows
\[
\xymatrix{
0\ar[r]& X\ar[r]\ar[d]^{\Int\cdot}& Y\ar[r]\ar[d]^{\Int\cdot}&Z\ar[d]^{\Int\cdot}\ar[r]& 0 \\
0\ar[r]& X^{coH}\ar[r]& Y^{coH}\ar[r]& Z^{coH}& \\
}\]
So, even forgetting that $X\to Y$ is injective, the snake Lemma gives in particular that
\[
\xymatrix{
X^{coH}/\Int\cdot X\ar[r]& Y^{coH}/\Int\cdot Y\ar[r]& Z^{coH}/\Int\cdot Z
}\]
is exact.
\end{proof}
Now, the above Theorem together with Lemma \ref{propepi}
gives the following:
\begin{teo}\label{teoiso}
Let $H$ be a Hopf algebra with nonzero integral $\Int$ and denote $\ul \Hom^H$
the Hom space in the stable category of $H$ comodules, then
the natural map
\[
\ul\Hom^H(k,M)\to \H_ 0^H(M)
\]
is an isomorphism.
\end{teo}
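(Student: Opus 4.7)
Since Lemma \ref{propepi} already provides surjectivity of $\eta_M \colon \ul\Hom^H(k,M) \to \H_0^H(M)$, only injectivity remains to be proven. The plan is to view both sides as cohomological $\delta$-functors on the stable triangulated category $\uM^H$ and exploit the surjectivity that Lemma \ref{propepi} supplies at every object.

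Both $\ul\Hom^H(k, T^\bullet(-))$ and $\H_0^H(T^\bullet(-))=\H_{-\bullet}^H(-)$ are cohomological $\delta$-functors on $\uM^H$: the first because $\ul\Hom$ is the hom of a triangulated category (hence cohomological in each variable), and the second by Theorem \ref{les}. Applying Lemma \ref{propepi} to every $T^n M$ yields epimorphisms $\eta^n_M\colon \ul\Hom^H(k, T^n M)\twoheadrightarrow \H_{-n}^H(M)$, naturally in $M$ and compatible with the connecting morphisms of each long exact sequence, since both LES are inherited from the common triangulated structure. For any injective comodule $I$, we have $T^n I \cong 0$ in $\uM^H$, so $\ul\Hom^H(k, T^n I) = 0 = \H_{-n}^H(I)$.

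Given a short exact sequence $0 \to A \to B \to C \to 0$ in $\M^H$ (equivalently a triangle in $\uM^H$), one therefore obtains a commutative diagram whose two rows are the long exact sequences and whose vertical arrows $\eta^n$ are all epimorphisms. Interpreting this as a short exact sequence of cochain complexes
\[
0 \to \ker\eta^\bullet \to \ul\Hom^H(k, T^\bullet(-)) \to \H_{-\bullet}^H(-) \to 0,
\]
and applying the snake lemma (or the induced long exact sequence of homology), one concludes that $\ker\eta^\bullet$ is itself an exact complex. Thus $\ker\eta^\bullet$ is a cohomological $\delta$-functor on $\uM^H$ which vanishes on every injective object.

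The key step, and the main obstacle, is to upgrade this to ``$\ker\eta^\bullet$ vanishes identically''.  Embedding $M$ into an injective $0 \to M \to I \to TM \to 0$ and using the LES of $\ker\eta^\bullet$ together with $\ker\eta^n(I) = 0$ produces isomorphisms $\ker\eta^n(M) \cong \ker\eta^{n-1}(TM)$ compatible with the canonical shift identifications. Combined with the fact that $\eta$ is known to be surjective at \emph{every} object (not only at a distinguished one), a 5-lemma argument on the compared LES propagates the iso-property through extensions, direct summands, and shifts in the triangulated sense; checking the base case directly at $M = k$ (where one computes $\ul\Hom^H(k,k) \cong k \cong \H_0^H(k)$ if $H$ is not cosemisimple, and both are zero otherwise) then forces $\ker\eta^\bullet$ to be zero throughout $\uM^H$, yielding the desired isomorphism.
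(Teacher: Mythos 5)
Your overall strategy --- use the surjectivity from Lemma \ref{propepi} at every object and propagate injectivity along the two long exact sequences by a four/five-lemma argument --- is essentially the paper's, but as written the proof has two genuine gaps, both located in the final ``forcing $\ker\eta^\bullet=0$'' step. First, the base case is insufficient: the class of comodules reachable from $k$ by extensions, direct summands and shifts does not in general contain all finite dimensional comodules, because a general co-Frobenius $H$ has simple comodules $S\not\cong k$ (e.g.\ the simples $kg$, $g\in G$, for $H=k[G]\#\B$, or the simple $\O(G)$-comodules when $H_0=\O(G)$) that are not shifts of the trivial one. You must verify the statement on \emph{every} simple comodule; this is easy (if $S$ is simple and $S\not\cong k$ then $S^{coH}=0$ and $\ul\Hom^H(k,S)=0$, so both sides vanish) but it has to be said, and only then does d\'evissage along composition series give all finite dimensional $M$.

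Second, and more seriously, the theorem is asserted for arbitrary $M\in\M^H$, and an infinite dimensional comodule is not a finite iterated extension of simples, so your propagation never reaches it; nor is it automatic that $\ul\Hom^H(k,-)$ and $\H_0^H$ commute with the filtered colimit of $M$ over its finite dimensional subcomodules. A separate argument is needed: if $f\colon k\to M$ satisfies $f(1)=\Int\cdot m$ for some $m\in M$, choose a finite dimensional subcomodule $M'\subseteq M$ containing both $f(1)$ and $m$; the class of $f$ then dies in $\H_0^H(M')$, so by the finite dimensional case $f$ factors through an injective already as a map into $M'$, hence also as a map into $M$. With these two additions your argument closes up and coincides with the proof in the paper.
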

\begin{proof}First 
recall the following version of the "5"-lemma: given a commutative diagram 
with exact rows:
\[\xymatrix{
A\ar[r]\ar[d]^a&B\ar[r]\ar[d]^b&C\ar[r]\ar[d]^c&D\ar[d] ^d\\
A'\ar[r]&B'\ar[r]&C'\ar[r]&D'\\
}\]
if $b$ and $d$ are monomorphisms and $a$ is an epimorphism, then $c$
is a monomorphism.

Consider $\S$ the class of $H$-comodules $S$ such that the map
$\ul\Hom^H(k,S)\to \H_ 0^H(S)$ is an isomorphism. Because
short exact sequences in $\M^H$ gives both long exact sequences for
$\H^H_n(-)$ and $\ul\Hom^H(k,T^{-n}(-))$, given a short exact sequence of comodules
\[
0\to S_1\to M\to S_2\to 0\]
where $S_1$ nd $S_2$ are in $\S$, then 
we have a diagram
\[\xymatrix{
\ul\Hom^H(k,T^{-1}S_2)\ar[d]^a\ar[r]&\ul\Hom^H(k,S_1)\ar[r]\ar[d]^b
&\ul\Hom^H(k,M)\ar[r]\ar[d]^c&\ul\Hom^H(k,S_2)\ar[d]^d\\
\H_0(T^{ -1}S_2)\ar[r]&\H_0(S_1)\ar[r]&\H_0(M)\ar[r]&\H_0(S_2)\\
}\]
Every vertical  map is an epimorphism (Lemma \ref{propepi}) and  both $b$ and $d$
are monomorphism because they are isomorphisms ($S_i\in\S$),
so $c$ is monomorphism, hence, an isomorphism. 

We conclude that the theorem is true for any finite dimensional comodule $M$,
provided it is true on simple comodules.

If $S=k$ and $k$ is not injective then $\Int\cdot k=0$ and
$\H_0^H(k)=k\cong \ul\Hom^ H(k,k)$. (If $k$ is injective, the theorem is noninteresting, but still true).

If $S$ is simple and $S \not\cong k$ then $S^ {co H}=0$, so trivially 
$\H_ 0^H(S)=0=\ul\Hom(k,S)$.

Now let  $M$ be a possibly  infinite dimensional comodule and $f:k\to M$ such that 
$f(1)=\Int\cdot m$ for some $m\ni M$.  Consider $M'\subset M$ a finite dimensional
 subcomodule containing $f(1)$ and $m$. Then,
the class of $f(1)$ in $\H_ 0^H(M')$ is zero. But because $M'$ is finite dimensional
 we know $\H^H_0(M')=\ul\Hom^H(k,M')$ and so
there exists a factorization
\[
\xymatrix{
k\ar[r]^f\ar[d]&M'	\ar@{^(->}[r]&M\\
I\ar[ru]}
\]
with $I$ injective. So,  $f$ is zero in $\ul\Hom^H(k,M)$.
\end{proof}

\subsection{Multiplicative structure}

Because $H$ is Hopf, the categories $\M^H$ and $\m^H$ are tensor categories,
and the tensor structure descends to the stable category, as one can see after
these standard facts:

\begin{lem}
\begin{enumerate}
\item If $C$ is a coalgebra and $V$ a vector space, the right $C$ comodule
$V\ot C$ with structure map $\rho=\id_V\ot\Delta$ is an injective comodule.
\item Every injective comodule is a direct summand of one as above.
  The category of comodules has enough injectives.
\end{enumerate}
\end{lem}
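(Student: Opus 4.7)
The plan is to establish the cofree/forgetful adjunction and deduce both assertions from it. The forgetful functor $U:\M^C\to {}_k\mathrm{Vect}$ has a right adjoint given by the cofree comodule functor $V\mapsto V\ot C$ (with structure $\id_V\ot\Delta$); concretely,
\[
\Hom^C(M,V\ot C)\longrightarrow \Hom_k(M,V),\quad f\longmapsto (\id_V\ot\eps)\circ f
\]
is a natural bijection, with inverse sending a linear map $g:M\to V$ to the colinear map $m\mapsto g(m_0)\ot m_1$. Since $U$ is exact (exactness in $\M^C$ is tested at the level of underlying vector spaces), its right adjoint preserves injective objects; as every object of ${}_k\mathrm{Vect}$ is injective, $V\ot C$ is injective in $\M^C$, proving (1).

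For the first half of (2), given a comodule $M$ with structure $\rho:M\to M\ot C$, the counit axiom $(\id\ot\eps)\rho=\id_M$ shows that $\rho$ is injective, and a direct computation using coassociativity shows that $\rho$ is colinear when the target carries the cofree structure $\id_M\ot\Delta$. Thus $\rho:M\hookrightarrow M\ot C$ embeds $M$ into an injective, so $\M^C$ has enough injectives.

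For the second half of (2), let $I$ be an injective comodule. Applying the previous step, we get a colinear monomorphism $\rho:I\hookrightarrow I\ot C$. By the injectivity of $I$, the identity morphism $\id_I:I\to I$ extends along $\rho$ to produce a colinear retraction $r:I\ot C\to I$ with $r\circ\rho=\id_I$. Hence $I$ is a direct summand of the cofree comodule $I\ot C$, which is of the form described in (1).

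The argument is entirely formal once the cofree adjunction is in hand; the only point that deserves a moment's care is the verification that $m\mapsto g(m_0)\ot m_1$ is indeed $C$-colinear, which is a direct application of coassociativity. No significant obstacle arises; the key conceptual input is simply that the forgetful functor to vector spaces is exact, so its right adjoint sends injectives to injectives.
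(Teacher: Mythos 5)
Your proposal is correct and follows essentially the same route as the paper: injectivity of $V\ot C$ via the cofree/forgetful adjunction together with the injectivity of all objects in ${}_k\mathrm{Vect}$, and the embedding $\rho:M\to M\ot C$ (monic by counitarity, colinear by coassociativity), which splits when $M$ is injective. The only difference is presentational — you spell out the inverse of the adjunction bijection and the exactness of the forgetful functor, which the paper leaves implicit.
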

\begin{proof}
\begin{enumerate}
\item It follows from the adjunction formula
\[
\Hom^C(M,V\ot C)\cong \Hom_k(M,V)
\]
\[
f\mapsto (\id_V\ot\epsilon)\circ f\]
and that every vector space is an injective
object in $k$-Vect.

\item If $M$ is a comodule, the structure morphism
\[
\rho_M:M\to M\ot C\]
gives an embedding into an injective object: $C$-colinearity is by coassociativity and
injectivity is because of counitarity. If $M=I$ is injective,
then the monomorphism $:\rho_I:I\to I\ot C$ splits, hence,
$I$ is a direct summand of $V\ot C$ where $V$ is the underlying vector space of $I$.

\end{enumerate}

\end{proof}

\begin{lem}\label{lemainj}
Let $H$ be a Hopf algebra, $M\in\M^H$. Denote
 $V_M$ the underlying vector space of $M$.
\begin{enumerate}
\item $M\ot H$ (with diagonal coaction)
is isomorphic to $V_M\ot H$ (with 
 $\rho=\id_{V_M}\ot\Delta_H$).
\item Also $H\ot M\cong V_M\ot H$

\item  If $I$ is injective 
then $M\ot I$ and $I\ot M$ are both injectives.
\end{enumerate}
\end{lem}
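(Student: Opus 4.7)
The plan is to prove the three parts in order, with parts (1) and (2) via explicit isomorphisms built from the Hopf algebra structure, and then reduce (3) to the previous lemma.

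For part (1), I would exhibit the classical ``fundamental isomorphism'' between the diagonal and cofree Hopf module structures. Define
\[
\phi: M \ot H \longrightarrow V_M \ot H, \qquad m \ot h \longmapsto m_0 \ot m_1 h,
\]
where the source carries the diagonal coaction $m \ot h \mapsto m_0 \ot h_1 \ot m_1 h_2$ and the target carries the cofree coaction $v \ot h \mapsto v \ot h_1 \ot h_2$. A direct Sweedler-type check shows that $\phi$ is right $H$-colinear. Its inverse is $m \ot h \mapsto m_0 \ot S(m_1) h$, as can be verified by applying the antipode axiom $S(m_1)m_2 = \epsilon(m_1) 1$ (and the analogous identity on the other side). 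This already uses only the existence of $S$, not its bijectivity.

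For part (2), the strategy is the same: define
\[
\psi: H \ot M \longrightarrow V_M \ot H, \qquad h \ot m \longmapsto m_0 \ot h m_1,
\]
where $H \ot M$ carries the diagonal coaction $h \ot m \mapsto h_1 \ot m_0 \ot h_2 m_1$. Colinearity follows from coassociativity and multiplicativity of $\Delta$. The inverse is the map $m \ot h \mapsto h S(m_1) \ot m_0$, and the two compositions collapse via $m_1 S(m_2) = \epsilon(m_1)1$ and $S(m_1)m_2 = \epsilon(m_1)1$ respectively. Again, only the existence (not bijectivity) of $S$ is used.

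For part (3), the idea is to reduce to the previous lemma, which says that every injective comodule is a direct summand of one of the form $V \ot H$ with $\id_V \ot \Delta_H$. If $I$ is injective, write $I$ as a direct summand of some $V \ot H$ of this form. Then $M \ot I$ is a direct summand of $M \ot (V \ot H) \cong V \ot (M \ot H)$, where we regard $M \ot H$ with the diagonal coaction and $V$ as a ``coefficient'' vector space. By part (1), $(M \ot H)_{\mathrm{diag}} \cong (V_M \ot H)_{\mathrm{cof}}$, so $M \ot I$ is a direct summand of $(V \ot V_M) \ot H$ with cofree coaction, which is injective by the previous lemma. Hence $M \ot I$ is injective. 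The case $I \ot M$ is symmetric, using part (2) in place of part (1).

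The argument is essentially a bookkeeping exercise in Sweedler notation; the only ``content'' is the observation that the antipode allows one to untwist the diagonal coaction into a cofree one. Consequently I expect no real obstacle beyond choosing the isomorphisms so that colinearity holds for the conventions fixed in the paper (right comodules, right coaction on the second factor).
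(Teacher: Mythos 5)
Your overall strategy is the same as the paper's: explicit ``untwisting'' isomorphisms for (1) and (2), and for (3) the reduction of an injective to a direct summand of a cofree comodule $V\ot H$ followed by an associativity shuffle. Parts (1) and (3) are correct and match the paper essentially verbatim. In part (2) your forward map $\psi(h\ot m)=m_0\ot hm_1$ is in fact the right choice --- it is the one that is actually $H$-colinear for the diagonal coaction $h\ot m\mapsto h_1\ot m_0\ot h_2m_1$ (the formula $h\ot m\mapsto m_0\ot m_1h$ displayed in the paper fails colinearity for noncommutative $H$, so your version is arguably a correction). However, your claimed inverse for $\psi$ is wrong, and this is a genuine gap.

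Concretely, with $\psi^{-1}(m\ot h)=hS(m_1)\ot m_0$ you get
\[
\psi^{-1}\bigl(\psi(h\ot m)\bigr)=\psi^{-1}(m_0\ot hm_1)=hm_2S(m_1)\ot m_0 ,
\]
and $m_2S(m_1)$ is of the form $x_2S(x_1)$, which is \emph{not} an antipode identity: the axioms give $S(x_1)x_2=\epsilon(x)1=x_1S(x_2)$, while $x_2S(x_1)=\epsilon(x)1$ holds only when $S$ is also an antipode for $H^{\cop}$ (e.g.\ when $S^2=\id$). The identities you invoke, $m_1S(m_2)=\epsilon(m_1)1$ and $S(m_1)m_2=\epsilon(m_1)1$, are correct statements but they are not the ones your composites produce. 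The correct inverse is $m\ot h\mapsto hS^{-1}(m_1)\ot m_0$, using $x_2S^{-1}(x_1)=\epsilon(x)1$ and $S^{-1}(x_2)x_1=\epsilon(x)1$. Consequently your parenthetical claim that part (2) uses ``only the existence (not bijectivity) of $S$'' is false as you have set it up; you need $S^{-1}$. In the context of the paper this is harmless, since $H$ co-Frobenius forces $S$ to be bijective (as recalled in Section 1), but you should either invoke that fact explicitly or restate the lemma's part (2) under a bijectivity hypothesis. Part (3) is unaffected once (2) is repaired.
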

\begin{proof}
\begin{enumerate}

\item We only exhibit the maps:
\[
M\ot H\to V_M\ot H\]
\[
m\ot h\mapsto m_0\ot m_1h
\]
with inverse
\[
m\ot h\mapsto m_0\ot S(m_1) h\]
The composition is
\[
m\ot h\mapsto m_0 \ot S(m_1) m_2h
= m_0 \ot \epsilon(m_1)h
=m \ot h
\]
The other composition is similar.
The surprising part is that these maps are $H$-colinear. 
For instance:
\[
\xymatrix@-2.5ex{
m\ot h\ar@{|->}@/^3ex/[rrr]\ar@{|->}[d]
&H\ot H\ar[r]\ar[d]^{\rho_{diag}} &V_H\ot H\ar[d]^{\id_V\ot \Delta}
& m_0\ot m_1h\ar@{|->}[d]\\
m_0\!\ot \! h_1\! \ot\!  m_1h_2\ar@{|->}@/_3ex/[rrr]
&H\!\ot\! H\!\ot\! H\ar[r]&V_H\!\ot\! H\!\ot\! H
&m_0\!\ot\! m_1h_1\!\ot\! m_2h_2
=m_0\!\ot\! (m_1h)_1\!\ot\! (m_1h)_2\\
}\]

\item The maps are similar: consider
\[
H\ot M\to V_M\ot H\]
\[
h\ot m \mapsto m_0\ot m_1h
\]
with inverse
\[
m\ot h\mapsto  S(m_1)h \ot m_0\]
The composition is
\[
h\ot m\mapsto m_0 \ot  m_1h
= S(m_1)m_2h\ot m_0
= \epsilon(m_1)h\ot m_0
= h\ot m
\]
The other composition is similar.
The colinearity follows the same lines.

\item If $I$ is injective then it is isomorphic to a direct summand of $V\ot H$ for some
 vector space $V$ (e.g. $V=V_I$), and so
$M\ot I$ is isomorphic to a direct summand of
\[
M\ot (V\ot H)\cong V\ot (M\ot H)\cong (V\ot V_M)\ot H
\]
and $I\ot M$ is a direct summand of
\[
( V\ot H)\ot M\cong V\ot (H\ot M)\cong (V\ot V_M)\ot H
\]
In any case, a direct summand of a comodule of the form $W\ot H$ for some vector 
space $W$.
\end{enumerate}
\end{proof}

There are several corollaries:
\begin{coro}
The tensor product is well defined in the stable category. In particular,
$K_0(\ul\m^H)$ is an associative ring.
\end{coro}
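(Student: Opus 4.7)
The plan is to use Lemma \ref{lemainj}(3) as the crucial input: the class of injective $H$-comodules absorbs tensor products with arbitrary comodules. First I would spell out that the diagonal tensor product makes $\M^H$ (and $\m^H$) into a monoidal category in the usual way. To show the bifunctor descends to the stable category, I must check that tensoring kills the ideal $I^H$ of morphisms factoring through injectives. If $f:M\to M'$ factors as $M\overset{\alpha}{\to}I\overset{\beta}{\to}M'$ with $I$ injective, then for any $N$ the morphism $f\ot\id_N$ factors as $M\ot N\overset{\alpha\ot\id}{\to} I\ot N\overset{\beta\ot\id}{\to} M'\ot N$, and $I\ot N$ is injective by Lemma \ref{lemainj}(3). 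Symmetrically for $\id_M\ot g$. Hence $\ot$ descends to a bifunctor on $\uM^H$ (resp.\ $\ul\m^H$), with unit $k$, associator and braiding inherited from $\M^H$.

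For the ring structure on $K_0$, I need the tensor product in each variable to send triangles to triangles. Fix $W\in\ul\m^H$. Given a triangle $X\to Y\to Z\to TX$ in $\ul\m^H$, Lemma \ref{lema2} produces a short exact sequence $0\to X'\to Y'\to Z'\to 0$ in $\m^H$ whose image in the stable category is isomorphic to the given triangle. Tensoring with $W$ over the field $k$ is exact (as tensoring over a field is), so
\[
0\to X'\ot W\to Y'\ot W\to Z'\ot W\to 0
\]
is still a short exact sequence in $\m^H$, and Lemma \ref{lema1} converts it back into a distinguished triangle $X'\ot W\to Y'\ot W\to Z'\ot W\to T(X'\ot W)$ in $\ul\m^H$. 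The same argument with $W$ on the left handles $W\ot -$. Consequently the assignment $([X],[Y])\mapsto [X\ot Y]$ respects the defining relations of $K_0$ on both sides.

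Associativity of the product on $K_0$ follows from the natural associativity isomorphism of $\ot$ in $\M^H$, which survives passage to the stable category; the class $[k]$ is a two-sided unit because $k\ot M\cong M\cong M\ot k$ as comodules; distributivity over addition in $K_0$ follows from the standard isomorphisms $(X\oplus X')\ot Y\cong X\ot Y\oplus X'\ot Y$. There is no real obstacle here: once Lemma \ref{lemainj}(3) gives that injectives form a tensor ideal, everything reduces to transporting the abelian-exact picture through Lemmas \ref{lema1}--\ref{lema2}. The only step that deserves emphasis is the absorption property of injectives under $\ot$, since without it the tensor structure would not descend at all.
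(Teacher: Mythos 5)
Your proof is correct and follows exactly the route the paper intends: the corollary is stated immediately after Lemma \ref{lemainj} precisely because part (3) of that lemma (injectives form a tensor ideal) is the whole content needed for the tensor product to descend, and the passage to $K_0$ via Lemmas \ref{lema1} and \ref{lema2} is the standard completion of the argument. The paper leaves the proof implicit; you have simply written it out in full, with no gaps.
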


Let $E:=E(k)$ be the injective hull of $k$ in $\M^H$.
 It is well-known that $H$ is co-Frobenius 
if and only if $E(k)$ is finite dimensional (see Theorem 2.1 in  \cite{AC}).
Also, for co-Frobenius Hopf algebras, there exists a finite dimensional projective
comodule $P=P(k)$ with a surjective map $P\to k$.

\begin{coro}\label{injfunctorial}
Define $E(M):=M\ot E$.
The map $M\to E(M)$  ($m\mapsto m\ot 1$) is a functorial injective embedding,
if $M\in\m^H$ then $E(M)\in\m^H$ as well. Also, $P(M):=M\ot P$ gives a functorial
projective surjection $P(M)\to M$, if $P\in\m^H$ then $P(M)\in\m^H$ as well.
\end{coro}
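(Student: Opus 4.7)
The plan is to build both constructions by tensoring $M$ with a fixed finite-dimensional object: the injective hull $E=E(k)$ for $E(M)$, and the projective cover $P=P(k)\to k$ for $P(M)$. Once this idea is in place, each of the four assertions (colinearity/monomorphism, injectivity or projectivity of the target, preservation of finite dimensionality, and functoriality) reduces to a formal consequence of results already in the excerpt, in particular Lemma \ref{lemainj}.

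First I would treat $E(M)=M\ot E$. The embedding $k\hookrightarrow E$ given by the injective hull is a colinear monomorphism, and tensoring over the base field $k$ with $M$ preserves monomorphisms, so $M\cong M\ot k\hookrightarrow M\ot E=E(M)$ is a colinear monomorphism (explicitly $m\mapsto m\ot 1$, where $1$ denotes the image of $1\in k$ in $E$, which is a trivial subcomodule so that the map is colinear with respect to the diagonal coaction on $M\ot E$). Injectivity of $E(M)$ is immediate from Lemma \ref{lemainj}(3) applied to the injective object $E$. For finite dimensionality, the co-Frobenius hypothesis gives $\dim_k E<\infty$ (by the cited Theorem 2.1 of \cite{AC}), hence $M\in\m^H$ implies $M\ot E\in\m^H$. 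Functoriality is clear: a morphism $f:M\to N$ induces $f\ot\id_E:E(M)\to E(N)$, and the square with the embeddings $m\mapsto m\ot 1$ commutes by the bifunctoriality of $\ot$.

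The argument for $P(M)=M\ot P$ is parallel but dualized. The surjection $P\to k$ tensored with $M$ yields a surjection $M\ot P\to M\ot k=M$ (exactness of $M\ot -$ over the field $k$). To see that $M\ot P$ is projective I would invoke the co-Frobenius property, which asserts projective $\Leftrightarrow$ injective; so $P$ is in fact injective, Lemma \ref{lemainj}(3) gives that $M\ot P$ is injective, and then again projective $=$ injective delivers projectivity of $P(M)$. Finite dimensionality when $M\in\m^H$ follows from $\dim_k P<\infty$, and functoriality is as before.

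There is no real obstacle; the only mild point to check is the colinearity of $m\mapsto m\ot 1$, which is why I fix the embedding $k\hookrightarrow E$ (resp.\ the surjection $P\to k$) at the start and build everything by base-change with $M$. This framing makes the whole statement essentially a repackaging of Lemma \ref{lemainj}(3) plus the standard fact that $\ot_k$ is exact.
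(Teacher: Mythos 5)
Your proof is correct, and for the ``formal'' half it coincides with what the paper does (the paper simply declares ``the injective part is clear'' and, implicitly, the same tensoring argument handles $P(M)$ once $P$ is in hand: colinearity of $m\mapsto m\ot 1$, Lemma \ref{lemainj}(3), and projective $=$ injective). Where you genuinely diverge is on the one step the paper actually spends its proof on: the existence of a \emph{finite-dimensional} projective $P$ with a surjection $P\to k$. You obtain it by citing the general structure theorem for co-Frobenius Hopf algebras (every finite-dimensional comodule is a quotient of a finite-dimensional projective, so a projective cover of $k$ exists and is finite-dimensional), which is legitimate since that theorem is quoted at the start of Section 1. The paper instead constructs $P$ explicitly from the right integral: it picks $h_0\in H$ with $\Int'(h_0)=1$, a finite-dimensional subcomodule $M_0\ni h_0$ of $H$, and its finite-dimensional injective hull $I(M_0)$, which is then shown to be a direct summand of $H$ on which $\Int'$ restricts to a surjection onto $k$. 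The explicit construction buys a concrete model of $P$ inside $H$ (used later, e.g.\ in Remark \ref{remP} where $P=\B\ot kD^{-1}$ for smash products $H_0\#\B$), while your citation-based route is shorter and equally valid for the statement as written. The remaining points you make (finite dimensionality of $E$ from Theorem 2.1 of \cite{AC}, colinearity of $m\mapsto m\ot 1$ because $k\subset E$ is a trivial subcomodule, exactness of $\ot_k$, and the double use of projective $\Leftrightarrow$ injective to transfer projectivity through Lemma \ref{lemainj}(3)) are all sound.
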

\begin{proof}
The injective part is clear. Let us prove the existence of a surjective map $P\to k$ with $P$ finite dimensional:

Since $\Int':H\to k$ is surjective, there exists $h_0\in H$ such that $\Int'(h_0)=1$, and
there exists a finite dimensional subcomodule $M_0\subset H$ containing $h_0$. In particular,
$\Int'(M_0)\neq 0$. Because $H$ is co-Frobenius, there exists a finite dimensional
injective hull of $M_0$, let's call it $I(M_0)$. Looking at the diagram
\[
\xymatrix{
0\ar[r]&\ar[d]M\ar[r]&I(M)\ar@{-->}[ld]^\exists\\
&H
}\]
Because $H$ is injective there exist the dashed arrow. Because
$I(M_0)$ is the injective hull and $H$ is injective, the map $I(M)\to H$ is injective
and $I(M_0)$ is a direct summand of $H$. Eventually changing $M_0$ by $I(M_0)$ we get
a finite dimensional direct summand of $H$ such that the restriction of $\Int'$
is non-zero, hence, a surjection $P\to k$ with $P$ projective and finite dimensional.
\end{proof}

\begin{coro}For any $M$, $N$ in $\M^H$, there are isomorphisms
in the stable category
\[
T(M\ot N)\cong TM\ot TN\cong
TM\ot N\cong M\ot TN\]
and similarly for $T'$.
Hence, $\ul\M^H$ and $\um^ H$ are tensor triangulated categories.
\end{coro}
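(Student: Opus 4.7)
The plan is to use the defining short exact sequences for $T$ and $T'$ together with the key fact from Lemma \ref{lemainj}(3) that tensoring an injective comodule with any comodule remains injective. The underlying principle, inherited from Happel's theorem and standard Frobenius category theory, is that $T(X)$ is represented in $\ul\M^H$ by the cokernel of \emph{any} embedding of $X$ into an injective object (not merely the canonical one $\rho_X:X\to X\ot H$), and dually $T'(X)$ is represented by the kernel of any surjection from a projective onto $X$. Once this is granted, the isomorphisms follow by an exactness argument.

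For the core step, consider the defining sequence $0\to M\to M\ot H\to TM\to 0$ in $\M^H$. Since tensoring over $k$ is exact, applying $-\ot N$ produces a short exact sequence
\[
0\to M\ot N\to M\ot H\ot N\to TM\ot N\to 0
\]
in $\M^H$. The middle term $M\ot H\ot N\cong (M\ot N)\ot H$ is injective by Lemma \ref{lemainj}(3), since $M\ot H$ is cofree and hence injective. Therefore $TM\ot N$ represents $T(M\ot N)$ in the stable category, giving the isomorphism $T(M\ot N)\cong TM\ot N$. A symmetric computation, applying $M\ot -$ to $0\to N\to N\ot H\to TN\to 0$, yields $T(M\ot N)\cong M\ot TN$. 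Iterating these gives $TM\ot TN\cong T(M\ot TN)\cong T^2(M\ot N)$, which is how one should read the displayed chain (the appearance of $TM\ot TN$ inside it being compatible only if interpreted up to an extra suspension, i.e.\ as a shorthand identifying both sides as representing some suspension of $M\ot N$). The analogous argument for $T'$ uses the defining sequence $0\to T'X\to X\ot H\to X\to 0$: tensoring with $N$ on either side preserves exactness and the injectivity of the middle term, so $T'(M\ot N)\cong T'M\ot N\cong M\ot T'N$.

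Finally, to promote this to a tensor triangulated structure on $\uM^H$ and $\um^H$ one must verify that $-\ot N$ sends triangles to triangles. This reduces to showing that the mapping cone construction commutes with $-\ot N$ up to stable isomorphism. Given $f:X\to Y$ with injective embedding $X\to I(X)$, tensoring the pushout square defining $Co(f)$ with $N$ yields another pushout square whose vertex is $(I(X)\ot N)\prod_{X\ot N}(Y\ot N)$, and $I(X)\ot N$ is injective by Lemma \ref{lemainj}(3), so this is a legitimate cone for $f\ot\id_N$. Naturality and coherence are then routine.

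The main obstacle is really conceptual rather than technical: the statement as literally written with $TM\ot TN$ in the middle of the chain cannot be an isomorphism to $T(M\ot N)$ in general, so one has to decide on the correct interpretation; once that is done, every step is a mechanical application of the exactness of $-\ot_k-$ together with Lemma \ref{lemainj}(3), plus the intrinsic characterization of $T$ and $T'$ as cokernels/kernels against injectives in the stable category.
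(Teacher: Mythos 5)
Your argument is correct and is essentially the paper's: the paper compares the cokernels of $i\ot\id$, $i\ot j$, $\id\ot j$ landing in the injectives $I(M)\ot N$, $I(M)\ot I(N)$, $M\ot I(N)$, while you use the canonical cofree embeddings $\rho_M\ot\id_N$ and $\id_M\ot\rho_N$ — the same mechanism (the cokernel of any embedding into an injective computes $T$ in the stable category, combined with Lemma \ref{lemainj}(3)), and the same dual argument for $T'$. You are also right to flag that $TM\ot TN\cong T^2(M\ot N)$ rather than $T(M\ot N)$; the paper's own diagram only establishes $T(M\ot N)\cong TM\ot N\cong M\ot TN$, so the middle term of the displayed chain should be read as a misprint, consistent with Corollary \ref{coroT}.
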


\begin{proof}
Let $i:M\to I(M)$ and $j:N\to I(N)$ be  embeddings into injective comodules, then
$I(M)\ot I(N)$ is injective and one can compute $T(M\ot N)$ via
\[
0\to M\ot N\overset{i\ot j}{\to} I(M)\ot I(N)\to T(M\ot N)\to 0
\]
But 
$I(M)\ot N$ and $M\ot I(M)$ are injectives too, and
we  have the following short exact   sequences with injective objects in the middle:
\[
\xymatrix@-1px{
0\ar[r]&M\ot N\ar[r]^{i\ot \id}&I(M)\ot N\ar[d]^{\id\ot j}\ar[r]&TM\ot N\ar[r]\ar@{-->}[d]&0\\
0\ar[r]&M\ot N\ar[r]^{i\ot j}\ar@{=}[u]\ar@{=}[d]&I(M)\ot I(N)\ar[r]&T(M\ot N)\ar[r]&0\\
0\ar[r]&M\ot N\ar[r]^{\id\ot j}&M\ot I(N)\ar[r]\ar[u]^{i\ot \id}&M\ot T N\ar[r]\ar@{-->}[u]&0\\}
\]
\end{proof}
Notice that the morphisms are not canonical in the category of comodules, but 
they are canonically determined in the stable category

\begin{coro}\label{coroT}
For any $m,n\in\Z$  there is an isomorphism in the stable category
\[
T^nM\ot T^nN\cong T^{n+m}(M\ot N)
\]
\end{coro}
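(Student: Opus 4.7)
The plan is to bootstrap the three isomorphisms
\[
T(M\ot N)\cong TM\ot TN\cong TM\ot N\cong M\ot TN
\]
from the previous corollary (together with the dual statements for $T'$) into the general shifted version, by induction on $|n|$ and $|m|$. Note first that the statement as written should read $T^nM\ot T^mN\cong T^{n+m}(M\ot N)$, the exponent on $N$ being $m$ rather than $n$.

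The first step is to establish the one-variable assertion: for every $n\in\Z$, there are natural isomorphisms
\[
T^n(M\ot N)\;\cong\; T^nM\ot N \;\cong\; M\ot T^nN
\]
in the stable category. For $n\ge 0$ this is a direct induction: the case $n=0$ is trivial and the case $n=1$ is the previous corollary; assuming it for $n$, we compute
\[
T^{n+1}(M\ot N)=T\bigl(T^n(M\ot N)\bigr)\cong T(T^nM\ot N)\cong T^{n+1}M\ot N,
\]
using the inductive hypothesis and then the base case applied to $T^nM\ot N$. For $n<0$, write $n=-k$ with $k>0$ and run the same induction using $T'$ instead of $T$, which is allowed because the previous corollary established the identical isomorphisms for $T'$, and because $T'$ is quasi-inverse to $T$ in $\uM^H$.

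Once this is in hand, the general statement reduces to iterating $T^n$ and $T^m$ separately:
\[
T^{n+m}(M\ot N)\;\cong\; T^n\bigl(T^m(M\ot N)\bigr)
\;\cong\; T^n(M\ot T^mN)
\;\cong\; T^nM\ot T^mN,
\]
where the first isomorphism is the composition law for the auto-equivalence $T$ on $\uM^H$, and the second and third are instances of the one-variable assertion applied respectively to $(M,N)\mapsto M\ot T^mN$ and to the pair $(M, T^mN)$.

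The only delicate point is keeping track of signs when $n$ and $m$ have opposite parities, since one then combines isomorphisms coming from $T$-induction with isomorphisms coming from $T'$-induction; but since $T$ and $T'$ are mutually inverse equivalences of $\uM^H$, and the one-variable statement has been established uniformly for all integer exponents, no additional care is required beyond composing the displayed chain of isomorphisms. I do not expect any genuine obstacle: the whole argument is formal once the previous corollary is available.
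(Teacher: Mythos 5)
Your proof is correct, and it is essentially the argument the paper has in mind: the paper states this corollary without proof as an immediate consequence of the preceding one, and your write-up simply makes explicit the routine induction on the exponent (using $T$ for positive powers, $T'$ for negative ones, and the fact that they are mutually quasi-inverse on $\ul\M^H$) followed by the two-step reduction $T^{n+m}(M\ot N)\cong T^n(M\ot T^mN)\cong T^nM\ot T^mN$. You are also right that the statement contains a typo and should read $T^nM\ot T^mN$.
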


\subsection*{K\"unneth map}

Let $M$ and $N$ be two comodules. It is clear that
$
M^H\ot N^H\subseteq (M\ot N)^H
$
and also one can easily check that
\[\Int\cdot(M\ot N^{coH})=\Int\cdot M\ot N^{coH}\]
and
\[\Int\cdot(M^{coH}\ot N)= M^{co H}\ot \Int\cdot N\]
So, there is a canonical map
\[
\H_0^HM\ot \H_0^HN=
\frac{M^{co H}}{\Int\cdot M}
\ot
\frac{N^{co H}}{\Int\cdot N}
\cong
\frac{M^{coH}\ot N^{coH}}{\Int\cdot M\ot N^{coH}+M^{coH}\ot \Int \cdot N}
\longrightarrow
\H_0^H(M\ot N)
\]
Moreover, using Corollary \ref{coroT} on can define maps 
\[
\xymatrix@-2ex{
\H^H_p(M)\ot \H^H_q(N)\ar@{=}[d]\ar@/^1ex/[rdrd]&&\\
\H^0_H(T'{}^pM)\ot \H^0_H(T'{}^qN)\ar[d]&&\\
\H^0_H(T'{}^pM\ot T'{}^qN)\ar[r]_\cong & 
\H^0_H(T'{}^{p+q}(M\ot N))\ar@{=}[r]&\H^H_{p+q}(M\ot N)}
\]
(If a number is negative, we use the convention $(T')^{-n}=T^n$.)
In this way, one can assembly all those maps and get, for any fixed $n$, 
 a map that we call "K\"unneth map"
\[
\bigoplus_{p+q=n}\H^H_p(M)\ot \H^H_q(N)\to \H^H_{n}(M\ot N)
\]
For  $M=N=k$, from concrete computations (see Corollary \ref{HC(k)}) we know this 
map cannot be an  isomorphism in general.
It would be interesting to know their general properties. In any  case,
$\H_\bullet^H(k)=\bigoplus_{n\in \Z}\H_n^H(k)$ is a graded algebra.

\section{Small injective  embeddings
for $\m^{H_0\#\B}$   \label{H0B} }

During this section we assume 
\begin{enumerate}
\item $H_0$ is a co-semisimple Hopf algebra,
\item  $V\in {}_{H_0}\YD^{H_0}$ is such that $\B(V)$, the Nichols algebra associated 
to the braided vector space $V$, is finite dimensional. 
\end{enumerate}
Let us recall briefly the conditions above and set notations and conventions.
 First, ${}_{H_0}\YD^{H_0}$ is the category
whose objects are
left $H_0$-modules and right $H_0$-comodules with the compatibility
\[
h_1m_0\ot h_2m_1=(h_2m)_0\ot (h_2m)_1)h_1
\]
where $h_1\ot h_2=\Delta h$, $h\in H_0$ and $m\in M$, $\rho(m)=m_0\ot m_1\in M\ot H_0$.
Morphisms are $H_0$-linear and colinear maps.
For any Hopf algebra $A$, the category
${}_{A}\YD^{A}$ is braided with
\[c_{V,W}:V\ot W\to W\ot V\]
\[
v\ot w\mapsto w_0 \ot w_1\cdot  v
\]

Recall that if $V$ is a braided vector space (e.g. $V\in \YD_{H_0}^{H_0}$)
then both $TV$ (the tensor algebra) and $T^cV$ (the tensor coalgebra)
are {\em braided} Hopf algebras. $TV$ has free product and braided-shuffle coproduct, 
while $T^cV$ has deconcatenation coproduct and braided-shuffle product.
The Nichols algebra  $ \B(V)$ is, by definition, 
 the image of  the unique (bi)algebra map $TV\to T^cV$
that is the identity on $V$: 
\[
\xymatrix{
TV\ar@{->>}[rd]\ar[rr]&& T^cV\\
&\B(V)\ar
@{^(->}
[ru]
&
}
\]
It happens to be, degree by degree,  the image of the quantum symmetrizer map
 associated to the braiding. We refer to Andruskievitch's notes \cite{Aleyva}
for a gentle introduction and full discussion on Nichols algebras. 
The reader may keep in mind the easy example $\B(V)=\Lambda V$ when the braiding is 
-flip. The braided bialgebra $\B(V)$ is actually a braided Hopf algebra, and the
bicross product $H_0\#\B(V)$ is a usual Hopf algebra. Since there is a lot of structures 
around $\B(V)$ we recall them:

\begin{itemize}
\item $\B$ is a coalgebra, we denote $\Delta (b)=b_1\ot b_2$,
\item $\B\in\M^{H_0}$, we denote the structure $\rho (b)=b_0\ot b_1$,
\item $H_0\#\B$ is a coalgebra, the comultiplication is given by the following diagram
(recall the underlying vector space of $H_0\#\B$ is $H_0\ot \B$):
\[
\xymatrix{
H_0\ar[d]^{\Delta_{H_0}}&\ot&\B \ar[d]^{\Delta_\B }\\
 H_0\ot H_0\ar@<-2ex>[d]_{\id_{H_0}}\ar[drr] |\hole&\ot& \B \ot \B \ar@<2ex>[d]^{\id_\B }
\ar[lld]_{C_{H_0,\B }}\\
H_0\ot \B  &\ot& H_0\ot \B \\
}\]
In Sweedler-type notation:
\[
\Delta( h\#b)=h_1 \#(b_1)_0\ot (b_1)_1h_2\# b_2
\]
\item In particular $\Delta( 1\#b)=1 \#(b_1)_0\ot (b_1)_1\# b_2$.
Denoting $H:=H_0\#\B$, we have that $\B\cong 1\#\B$ is a right $H$-subcomodule of $H$.
With this structure we consider $\B$ as an object in $\M^H$. To emphasize the difference
with $\rho:B\to \B\ot H_0$ we call it $\rho_H$.
\end{itemize}

\begin{ex}\label{ex31} Let $x\in V\subset \B$, $\Delta x=x\ot 1+1\ot x$. Assume
$\rho(x)=x\ot g\in \B\ot H_0$. In order to compute $\rho_H(x)$ we proceed
as follows:
\[
\rho_H(x)\leftrightarrow \Delta_H(1\# x)
=
1 \# (x_1)_0\ot (x_1)_1 \# x_2
\]
\[
=1 \#(x)_0\ot (x)_1\# 1
+1 \#(1)_0\ot (1)_1\# x
\]
\[
=1 \#x\ot g\# 1
+1 \#1\ot 1\# x
\leftrightarrow 
x\ot g
+1 \ot  x
\]\end{ex}

The main fact of this section is the following:
\begin{prop}\label{propinj}
$\B\in\M^H$ is an injective object.
\end{prop}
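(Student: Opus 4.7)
The strategy is to realize $\B \cong 1 \# \B$ as an $H$-comodule direct summand of $H$ itself (equipped with its right regular coaction $\Delta_H$). By the preceding lemma, $H \cong k \otimes H$ is cofree and therefore injective in $\M^H$, so any $H$-subcomodule admitting an $H$-colinear retraction from $H$ is automatically injective. The inclusion $\iota : \B \hookrightarrow H$, $b \mapsto 1\#b$, is itself $H$-colinear: the formula $\Delta_H(1\#b) = 1 \# (b_1)_0 \otimes (b_1)_1 \# b_2$ from Example \ref{ex31} is precisely the $H$-coaction $\rho_H$ on $\B$.

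To build the retraction I exploit the cosemisimplicity of $H_0$: the trivial right $H_0$-comodule $k$ is a simple summand of $H_0$ via the unit, which produces an $H_0$-colinear splitting $\pi : H_0 \to k$ with $\pi(1)=1$. Unwinding $H_0$-colinearity, this $\pi$ is exactly a right integral on $H_0$, i.e.\ it satisfies $\pi(h_1) h_2 = \pi(h)\, 1_{H_0}$. I then define
\[
r : H = H_0 \# \B \longrightarrow \B, \qquad r(h \# b) := \pi(h)\, b.
\]
Trivially $r \circ \iota = \id_\B$ since $\pi(1)=1$, so only the $H$-colinearity of $r$ needs to be verified.

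For colinearity, applying $(r \otimes \id_H)$ to $\Delta_H(h\#b) = h_1 \# (b_1)_0 \otimes (b_1)_1 h_2 \# b_2$ yields $\pi(h_1)(b_1)_0 \otimes (b_1)_1 h_2 \# b_2$; since $\pi(h_1)$ is a scalar, it can be transported across the tensor to multiply $h_2$ inside the second factor, and the right-integral identity $\pi(h_1) h_2 = \pi(h)\, 1_{H_0}$ collapses the expression to $\pi(h)\cdot\big((b_1)_0 \otimes (b_1)_1 \# b_2\big) = \rho_H(\pi(h)\, b) = \rho_H(r(h\#b))$. The only real subtlety is book-keeping: three Sweedler summations are in play at once (the $H_0$-coproduct of $h$, the $\B$-coproduct of $b$, and the Yetter--Drinfel'd coaction $\rho_\B(b_1) = (b_1)_0 \otimes (b_1)_1$), and one must be careful that the scalar $\pi(h_1)$ is genuinely free to migrate to the position where the right-integral identity applies. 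Conceptually, however, the proposition is unsurprising: it merely says that ``Haar averaging'' over the cosemisimple factor $H_0$ supplies the $H$-colinear projection onto the Nichols-algebra factor $\B$, which is therefore an injective direct summand of $H$ in $\M^H$.
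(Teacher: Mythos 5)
Your proposal is correct and follows essentially the same route as the paper: exhibit $\B\cong 1\#\B$ as a direct summand of the injective comodule $H$ by constructing the retraction $h\#b\mapsto \Int'_0(h)\,b$ from a right integral $\Int'_0$ on the cosemisimple $H_0$ (obtained as an $H_0$-colinear splitting of $k\to H_0$), and verify its $H$-colinearity using $\Int'_0(h_1)h_2=\Int'_0(h)1$. The colinearity computation you sketch is exactly the diagram chase in the paper's proof.
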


\begin{proof}
Since $H_0$ is cosemisimple, the inclusion $k\to H_0$ splits as $H_0$-comodule.
Choose a splitting $\Int'_0:H_0\to k$. This is actually right integral for $H_0$, that is, 
it satisfies
\[
\Int'_0(h_1)h_2=\Int_0'(h)1
\]
and additionally $\Int'_0(1)=1$.

Now we define a splitting $H\to \B$ of the inclusion $\B\cong 1\#\B\subset H_0\#\B=H$
via
\[
h\# b\mapsto \Int'_0(h)b
\]
We need to see that it is $H$-colinear. Recall the $H$-structure in $\B$ is
given by the identification $\B\cong1\#\B\subset H$, so
\[
\rho_H(b)=b_{1_0}\ot b_{1_1}\# b_2
\]
We check $H$-colinearity:
\[
\xymatrix@-1ex{
h\#b\ar@{|->}@/^3ex/[rrr]\ar@{|->}[d]&
H_0\#\B\ar[r]^\pi\ar[d]^{\Delta_{H_0\#\B}}&\B\ar[d]^{\rho_H}&
\Int'_0(h)b\ar@{|->}[d]\\
h_1  \# b_{1_0} \!\ot\!  b_{1_1}h_2\#b_2\ar@{|->}@/_2ex/[rrd]&
H_0\#\B \!\ot\!  H_0\#\B \ar[r]&\B \ot   (H_0\#\B) &
\Int'_0(h)b_{1_0}\ot b_{1_1}\# b_2\ar@{=}[d]\\
&&
\Int'_0 (\!  h_1 \! ) b_{1_0} \!\ot\!  b_{1_1}  h_2\#b_2\ar@{=}[r]&
 b_{1_0} \!\ot\!  b_{1_1}\Int'_0 (\!  h_1 \! )h_2\#b_2
\\
}\]

\end{proof}

\begin{rem}
The proof is independent of the fact of  $\B$ being finite dimensional,
but we are interested in the case $\dim\B<\infty$ so that $H_0\#\B$ is co-Frobenius.
\end{rem}

As a corollary we have
\begin{coro}\label{coroinj}
For any $M\in\M^H$, the map $i_M:M\to M\ot\B$ defined by 
\[
m\mapsto m\ot 1
\]
is a functorial injective embedding. In particular, if $\B$ is finite dimensional
then  $M\to M\ot \B$ ($m\mapsto m\ot 1$) is a finite dimensional 
embedding working in $\m^H$. From the short exact sequence
\[
0\to M\to M\ot\B\to (M\ot \B)/M\to 0
\]
we have $TM\cong (M\ot\B)/M$. Recall $\B$ is graded (with the tensor grading)
and $\B_{top}$ (its maximal degree) has dimension 1. The Kernel of $\Int'|_\B$
is $\B_{<top}=\oplus_{i=0}^{top-1}\B_i$. From
$\pi:\B\to \B/\B_{<top}$ we have the short exact sequence
\[
0\to M\ot \B_{<top}\to M\ot \B\to M\ot (\B/\B_{<top})\to 0\]
hence $T'(M\ot (\B/\B_{<top}))\cong M\ot \B_{<top}$.
\end{coro}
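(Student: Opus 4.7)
The plan is to assemble two facts already proved---Proposition \ref{propinj} (that $\B$ is an injective $H$-comodule) and Lemma \ref{lemainj}(3) (that tensoring with an injective preserves injectivity)---together with the general principle of Frobenius exact categories that the suspension (resp.\ desuspension) may be computed from \emph{any} injective embedding (resp.\ projective surjection), not merely the canonical one coming from the structure map or the injective hull.

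First I would check that $i_M:m\mapsto m\ot 1$ is an $H$-colinear monomorphism when $M\ot\B$ carries the diagonal coaction. Colinearity reduces to $\rho_H(1)=1\ot 1$ (which holds because $1\in\B$ is grouplike, as one sees from Example \ref{ex31}); a left inverse is $\id_M\ot\epsilon_\B$; and naturality in $M$ is formal. Combining Proposition \ref{propinj} with Lemma \ref{lemainj}(3), $M\ot\B$ is injective, and when $\dim\B<\infty$ and $M\in\m^H$ the whole construction stays inside $\m^H$. For the identification $TM\cong(M\ot\B)/M$: although the original definition uses the canonical embedding $\rho_M:M\to M\ot H$, the argument behind Lemma \ref{lema1} shows that the cokernel of any embedding of $M$ into an injective object represents $TM$ in the stable category. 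Applying this to $i_M$ yields the identification in $\uM^H$, and in $\um^H$ when everything is finite dimensional.

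For the last assertion I first need $\B_{<top}$ to be an $H$-subcomodule of $\B$, so that the quoted short exact sequence actually lives in $\M^H$. Since $\B=\B(V)$ is graded and both the YD-coaction $\rho:\B\to\B\ot H_0$ and the coproduct $\Delta_\B$ are homogeneous for the tensor grading inherited from $TV$, the formula $\rho_H(b)=(b_1)_0\ot(b_1)_1\#b_2$ shows that the $\B$-component $(b_1)_0$ of $\rho_H(b)$ has degree $|b_1|\le|b|$; hence $\B_{<top}$ is stable under $\rho_H$. Tensoring by $M$ preserves exactness, producing the stated sequence in $\M^H$. The middle term $M\ot\B$ is injective and so, by the Frobenius property, also projective, so the dual of the previous paragraph---any projective surjection onto $N$ realizes its kernel as $T'N$ in the stable category---gives $T'(M\ot(\B/\B_{<top}))\cong M\ot\B_{<top}$.

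The main obstacle I expect is really only this stability check for $\B_{<top}$, which requires tracing through the semidirect structure of $H=H_0\#\B$ and exploiting homogeneity of each map defining $\rho_H$; once it is in place everything else collapses to citations of earlier results, and the finite-dimensionality clauses follow automatically because both $\B_{<top}$ and $\B/\B_{<top}$ are finite-dimensional summands of the finite-dimensional $\B$.
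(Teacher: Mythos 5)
Your proposal is correct and follows essentially the route the paper intends: the paper states this corollary without a separate proof, deriving it inline from Proposition \ref{propinj}, Lemma \ref{lemainj}(3), and the independence of $T$ and $T'$ (in the stable category) from the choice of injective embedding or projective surjection. Your explicit verification that $\B_{<top}$ is an $H$-subcomodule via homogeneity of $\Delta_\B$ and of the $H_0$-coaction is exactly the detail the paper leaves implicit, and it is carried out correctly.
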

\begin{rem}\label{remP}
$\B/\B_{<top}$ is {\em not } isomorphic to $k$ in general, but it is 1-dimensional.
So, in order to compute $T'M$ one should 
{\em "twist $M\ot \B_{<top}$ by the inverse of the quantum determinant":}

If $\b$ is a generator of the 1-dimensional vector space $\B_{top}$ then $k\b$
is not in general an $H$-subcomodule of $\B$, but $\B/\B_{<top}=k\ra\b$ is
so, hence
\[\rho_H(\ra \b)=\ra\b\ot D\]
for a unique group-like element $D\in H$, that we call "quantum determinant".
From the surjective map
\[
\B\overset{\pi}{\to} \B/\B_{<top}\cong kD\]
 we get a  surjective map into the trivial comodule $k$:
\[
\B\ot kD^{-1}\to k
\]
If we call $P:=\B\ot kD^{-1}$, it is a projective $H$-comodule that surjects into $k$
and from it one has functorial projective surjections for any comodule $M$:
\[
P(M):=M\ot P\to M
\]
and functorial $T'$, since from:
\[
0\to M\ot \B_{<top}\ot kD^{-1}\to
 M\ot P\to M\to 0\]
we get
$T'(M):=M\ot B_{<top}\ot kD^{-1}$ is a functor in $\M^H$ (resp. in $\m^H$
if $M$ is finite dimensional) that gives the desuspension functor in $\uM^H$
(resp. in $\um^H$).
\end{rem}

Before going into $K_0$ rings, we look at  some examples.

\section{First examples}

\subsection{The example $k[\Z]\# k[x]/x^2$ \label{THEexample}}
Let $H$ be the $k$-algebra generated by $x$ and $g^{\pm 1}$ with relations
\[
gx=-xg, \ x^2=0
\]
It is a Hopf algebra if one defines the 
comultiplication by
\[
\Delta g= g\ot g, \ \Delta x=x\ot g+1\ot x
\]
(to be compared with Example \ref{ex31}).
That is, $g$ is group-like and $x$ is 1-$g$-primitive. Notice that $k[x]/x^2$
is not a Hopf algebra in the usual sense
 (unless characteristic=2), but it is a super Hopf algebra. Nevertheless, 
$H=k[\Z]\#(k[x]/x^2)$ is a Hopf algebra in the usual sense.
Maybe all computations in this example are folklore, but for clearness 
we include them.

For an element
\[
\om=\sum_{n\in\Z}a_ng^n+\sum_{n\in\Z}b_ng^nx
\]
define
\[
\fbox{$\Int(\om)
:=b_{0}
$}
\]
The main fact about the category  $\M^H$, noticed by Bodo Pareigis \cite{P}, is


\begin{center}
\fbox{\em A right  $H$-comodule $M$ is the same as a d.g. structure on $M$}
\end{center}

\

Notice that evaluation at $x=0$ gives a map $H\to k[\Z]$, so any $H$-comodule is 
a $k[\Z]$-comodule (i.e.  a $\Z$-graded object), but the presence of $x$ keep track of a square-zero differential.
We just write the correspondence:
if $M=(\oplus_{n\in\Z}M_n,\partial)$ with $\partial(M_n)\subseteq M_{n-1}$ and
$\partial^2=0$ then, for $m\in M_n$, the right comodule structure 
is
\[
\fbox{$
\rho(m)=m\ot g^{n}+\partial(m)\ot xg^{n-1}
$}\]
and every right $H$-comodule is of this form.


It is a pleasant exercise to check that the standard differential on the tensor products
of complexes with the usual Koszul $\partial\ot 1\pm 1\ot \partial$ agree with
the standard  $H$-comodule structure on the tensor product of $H$-comodules.

Notice that
\[
\Delta 1=1\ot 1\]
\[
\Delta x=x\ot g+1\ot x\]
means that $k[x]/x^2$ is a right $H$-subcomodule of $H$. As d.g. vector space is the
 complex
\[
\cdots \to 0\to kx\underset {x\mapsto 1}{\overset{\partial}{\to}} k\to 0\to\cdots\]
where $|x|=1$, $ |1|=0$.

\subsubsection*{Smaller injective embeddings for
$H=k[\Z]\#k[x]/x^2$}

In this case we have $\B=k\oplus kx$, considered as $H$-comodule
via
\[
\rho(1)=1\ot 1
\]
\[
\rho(x)=x\ot g+1\ot x\]
The general argument developed in the previous section gives
us that, for any $M\in\M^H$, the  map $M\to I(M):=M\ot \B =M\ot k[x]/x^2$
given by $i(m)=m\ot 1$ is an embedding of $M$ into an injective object.
In particular $I(M)/\rho(M)=M\ot x$ and we have proven the following:
\begin{coro}
In the stable category of $\M^H$ for $H=k[\Z]\#(k[x]/x^2)$,
\[
TM=I(M)/M\cong M\ot kx\cong M[1]
\]
\end{coro}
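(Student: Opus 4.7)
The plan is to specialize Corollary \ref{coroinj} to $\B=k[x]/x^2$ and then translate the resulting quotient back into the language of d.g.\ vector spaces via Pareigis's dictionary. That corollary gives directly $TM\cong (M\ot\B)/i_M(M)$ in the stable category with $i_M(m)=m\ot 1$, and since $\B=k\oplus kx$ as a vector space, the underlying vector space of the quotient is clearly $M\ot kx$. What remains is to identify the induced $H$-comodule structure on $M\ot kx$ with the shifted d.g.\ vector space $M[1]$.

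I would proceed structurally rather than by brute computation. Matching $\rho(x)=x\ot g+1\ot x$ against the Pareigis formula $\rho(m)=m\ot g^n+\partial m\ot xg^{n-1}$ for $|m|=n$ shows that $\B$ itself is the d.g.\ vector space with $\B_0=k$, $\B_1=kx$ and $\partial x=1$. Moreover the subcomodule $k\subset\B$ (sitting in degree $0$) has quotient $kx$ carrying the coaction $\rho(\bar x)=\bar x\ot g$, which by the same dictionary is the shifted trivial complex $k[1]$. One therefore has a short exact sequence of $H$-comodules $0\to k\to\B\to k[1]\to 0$. Tensoring with $M$ and invoking the compatibility between the diagonal coaction on tensor products of $H$-comodules and the Koszul-signed tensor product of d.g.\ vector spaces (the ``pleasant exercise'' recorded in Section \ref{THEexample}) produces $0\to M\to M\ot\B\to M[1]\to 0$, which is precisely the defining sequence for $TM$.

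The main obstacle is the sign bookkeeping underlying that compatibility. For a homogeneous $m\in M_n$, the diagonal coaction expands as
\[
\rho(m\ot x)=(m\ot x)\ot g^{n+1}+(-1)^n(m\ot 1)\ot xg^n+(\partial m\ot x)\ot xg^n+(\partial m\ot 1)\ot xg^{n-1}x,
\]
and one uses $gx=-xg$ and $x^2=0$ to see that the last term vanishes, while the middle $(-1)^n$-term, though nontrivial, lies in $M\ot 1\ot H$ and so dies in the quotient. What remains in $(M\ot kx)\ot H$ is exactly the Pareigis coaction for a degree-$(n+1)$ element with differential $\partial m\ot x$, guaranteeing that $M\ot kx\cong M[1]$ without a sign anomaly.
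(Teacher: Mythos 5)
Your proof is correct and takes essentially the same route as the paper: both specialize the functorial embedding $M\to M\ot\B$ of Corollary \ref{coroinj} to $\B=k[x]/x^2$ and read off the quotient $M\ot kx\cong M[1]$ through Pareigis's dictionary. The paper leaves the identification of the induced coaction (and the sign bookkeeping with $gx=-xg$, $x^2=0$) implicit, whereas you verify it explicitly and correctly.
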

We leave as an   exercise the following:
\begin{coro}
Identifying  d.g.$Vect_k$ and $\M^H$, 
the comodule
$M\ot k[x]/x^2$  identifies with the mapping cone of the identity of $M$.
Moreover, the "stable $H$-comodule mapping cone" of a colinear map 
identifies with the classical mapping cone of a map between complexes.
\end{coro}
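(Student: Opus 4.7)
\emph{Proof proposal.} The plan is to exploit the Pareigis correspondence and reduce both assertions to explicit (sign-careful) calculations with the diagonal coaction on tensor products. All the machinery is already in place in the preceding sections; only Koszul sign bookkeeping remains.

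For the first assertion, I would compute the comodule structure on $M \ot k[x]/x^2$ via the diagonal rule $\rho(m \ot y) = m_0 \ot y_0 \ot m_1 y_1$, using $\rho(m) = m \ot g^{|m|} + \partial m \ot x g^{|m|-1}$, $\rho(1) = 1 \ot 1$, $\rho(x) = x \ot g + 1 \ot x$, and the relations $x^2 = 0$, $g^n x = (-1)^n x g^n$ in $H$. A short direct calculation yields
\[
\partial(m \ot 1) = \partial m \ot 1, \qquad \partial(m \ot x) = \partial m \ot x + (-1)^{|m|}\, m \ot 1.
\]
Identifying $M \ot x$ with $M[1]$ via $m \ot x \mapsto m$, the underlying graded vector space becomes $M[1] \oplus M$, and the formulas above display the standard cone differential of $\id_M$ (up to the harmless rescaling of $M[1]$ by $(-1)^{n-1}$ in degree $n$ that reconciles the two common sign conventions for the cone).

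For the second assertion, I would use the pushout description of $Co(f)$ from Section 1, together with the functorial injective embedding $i_M \colon M \to M \ot k[x]/x^2$ from Corollary \ref{coroinj}. First, observe that any $H$-colinear $f \colon M \to N$ automatically preserves grading and commutes with $\partial$, so $f$ is a degree-zero chain map. Then
\[
Co(f) = \bigl( (M \ot k[x]/x^2) \oplus N \bigr) \big/ \langle\, (m \ot 1,\, -f(m)) : m \in M \,\rangle,
\]
and the natural section $(M \ot x) \oplus N \hookrightarrow Co(f)$ identifies the pushout, as a graded vector space, with $M[1] \oplus N$. Pushing the differential of $M \ot k[x]/x^2$ through this section replaces the summand $(-1)^{|m|}\, m \ot 1$ in $\partial(m \ot x)$ by $(-1)^{|m|}\, f(m) \in N$, giving
\[
\partial(m \ot x,\, n) = \bigl( \partial m \ot x,\; \partial n + (-1)^{|m|}\, f(m) \bigr),
\]
which, after the same sign rescaling as before, is the classical mapping cone of $f$.

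The only real obstacle is sign bookkeeping: different standard conventions for the mapping cone differ by the obvious isomorphism that multiplies $M[1]_n$ by $(-1)^n$, and my direct computation naturally lands in one convention while the ``classical'' cone is usually written in another. Independence of $Co(f)$ in $\uM^H$ from the choice of injective embedding is already built into the pushout construction from Section 1, so no separate verification is needed there.
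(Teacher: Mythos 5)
Your proof is correct, and it is precisely the computation the paper intends: the statement is explicitly left as an exercise (``We leave as an exercise the following''), with all the needed ingredients --- the Koszul-sign remark on tensor products, the identification $M\ot kx\cong M[1]$, the embedding $i_M(m)=m\ot 1$, and the pushout description of $Co(f)$ --- already set up in the surrounding text. Your sign bookkeeping checks out (the diagonal coaction with $g^{|m|}x=(-1)^{|m|}xg^{|m|}$ does give $\partial(m\ot x)=\partial m\ot x+(-1)^{|m|}m\ot 1$, and the rescaling of $M[1]$ by a degree-dependent sign reconciles this with the usual cone convention), so nothing is missing.
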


\subsubsection*{Homology }

For a d.g. vector space $M=(\oplus_n M_n,\partial)$
 viewed as $k[\Z]\# k[x]/x^2$-comodule, the
 coinvariants are
\[
M^{coH}=\{m : \rho(m)=m\ot 1_H=m\ot g^0\}
\]
But "$\rho(m)=m\ot g^0$ " means that $m\in M_0$ and $\partial m=0$,
so
\[
M^{coH}=\Ker(\partial:M_0\to M_{-1})\]
On the other side, the action 
of the integral on an element $m$ gives
\[
\Int\cdot m=(\id\ot\Int)\rho(m)
=(\id\ot\Int)\rho(\sum_ nm_n)\]\[
=(\id\ot\Int) (\sum_ n(m_n\ot g^n+\partial(m_n)\ot g^{n-1}x))
=\partial  (m_{1})\in M_0\]
That is
\[
\Int\cdot
(\oplus_n M_n,\partial)=\Im(\partial:M_ {1}\to M_0)
\subseteq \Ker(\partial:M_0\to M_{-1})\subseteq M_0
\]
Hence
\[
\fbox{
\fbox{
$\H_0^{k[\Z]\#k[x]/x^2}(M)=M^{coH}/\Int\cdot M=H_0(M,\partial)$
}}\]

\subsection{The example $k[\Z]\#k[x]/x^N$ and $N$-complexes}

Fix $N\in \N$, $N\geq 2$.
Let $H$ be the algebra generated by $g^{\pm 1}$ and $x$ with the relations
\[
x^N= 0, \ gx=\xi_Nxg
\]
where $\xi_N$ is an $N$-primitive root of unity. This algebra is Hopf
 with comultiplication
\[
\Delta g= g\ot g, \Delta x=x\ot g+1\ot x
\]
To have an $H$-comodule is the same as a $\Z$-graded vector space together with
a degree -1 map $\partial$ satisfying $\partial^N=0$. The tensor structure 
for $(M_\bullet,\partial_M)\ot (M'_\bullet,\partial_R)$ 
is given by
the usual total grading in $M\ot M'$, and the differential on homogeneous elements
is
\[
\partial(m\ot m')=\partial(m)\ot m' + \xi_N^{|m|}m\ot\partial (m')
\]
For an homogeneous element $m\in M$ of degree $n$, the coaction
is given by
\[
\rho(m)=m\ot g^n+\partial(m)\ot xg^{n-1}
+\frac{1}{[2]_\xi}\partial^2(m)\ot x^2g^{n-2}+
\cdots 
+\frac{1}{[N-1]_\xi!}\partial^{N-1}(m)\ot x^{N-1}g^{n-N+1}
\]
\[
=\sum_{i=1}^{N-1}
\frac{1}{[i]_\xi!}\partial^{i}(m)\ot x^{i}g^{n-i}
\]
where as usual 
$[0]_\xi!=[1]_\xi!=1$, 
$[n]_\xi=1+\xi+\cdots+\xi^{n-1}$ and $[n+1]!_\xi=[n+1]_\xi\cdot [n]!_\xi$.

If $(M_\bullet,\partial)$ is an $N$-complex, there are several ways to associate an
 "homology" in degree $n$. For each $0<i<N$, since $0=\partial^N=\partial^i\partial^{N-i}$,
one may consider $\Ker(\partial^i) / \Im(\partial^{N-i})$. The general machinery
of co-Frobenius algebras and stable categories, however, choose one particular $i$.
Since
$M^{coH}=\Ker(\partial)\cap M_0$ and $\Int\cdot M=M_0\cap \Im(\partial^{N-1})$
we have
\[
\H_ 0^H(M)=\frac{\{m\in M_0 : \partial(m)=0\}}
{\partial^{N-1}(M_{-N+1})}
\]
The other (homological) degrees are not the $\H_0$ of the  degree-shiftings
of the $N$-complex. As an illustration we compute $\H_1(M)$ in terms of the 
$N$-complex data:
\begin{prop} $H_1(M)\cong
\dfrac{ \Ker(\partial^{N-1})\cap M_{N-1}}{\Im( \partial:M_{N}\to M_{N-1})}
$
\end{prop}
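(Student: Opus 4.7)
The plan is to compute $\H_1^H(M) := \H_0^H(T'M)$ directly, by realizing $T'M$ as an explicit $N$-complex and then separately identifying its degree-zero cycles and the image of the integral action. From Remark~\ref{remP} one has $T'M \cong M \otimes \B_{<top} \otimes kD^{-1}$ in the stable category. For $H = k[\Z]\#k[x]/x^N$ the Nichols algebra is $\B = k[x]/x^N$, its top component is $kx^{N-1}$, and evaluating $\rho_H(x^{N-1}) = \Delta(x^{N-1})$ modulo $\B_{<top}$ identifies the quantum determinant as $D = g^{N-1}$. Thus $T'M$ has basis $\{m \otimes x^i \otimes D^{-1} : 0 \leq i \leq N-2\}$ of total degree $|m| + i - (N-1)$, with differential governed by the $\xi$-Koszul rule $\partial(a \otimes b) = \partial(a) \otimes b + \xi^{|a|} a \otimes \partial(b)$.

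First I identify $(T'M)^{coH} = \Ker(\partial|_{(T'M)_0})$. A degree-zero element has the form $\omega = \sum_{i=0}^{N-2} m^{(i)} \otimes x^i \otimes D^{-1}$ with $m^{(i)} \in M_{N-1-i}$. Writing out $\partial\omega$ and grouping by basis vectors $x^{i-1}$, the cycle condition yields a recurrence forcing each $m^{(i+1)}$ to be a specific nonzero scalar multiple of $\partial(m^{(i)})$. Iterating, $m^{(i)}$ is a nonzero scalar multiple of $\partial^i(m^{(0)})$, and the residual condition $\partial(m^{(N-2)}) = 0$ collapses to $\partial^{N-1}(m^{(0)}) = 0$. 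Projection onto the $x^0\otimes D^{-1}$ component therefore gives a linear isomorphism $\Phi^{-1}\colon \Ker(\partial|_{(T'M)_0}) \xrightarrow{\sim} \Ker(\partial^{N-1}) \cap M_{N-1}$.

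Next I compute the image of $\Int \cdot T'M$ under $\Phi^{-1}$. A routine check of the defining equation for left integrals on $H$ shows that $\Int \in H^*$ is the functional picking out the coefficient of $x^{N-1}g^0$; consequently, for any $n \in (T'M)_{N-1}$ one has $\Int \cdot n = \frac{1}{[N-1]_\xi!}\partial^{N-1}(n)$, and $\Int \cdot n = 0$ in all other degrees. Applying this to an elementary tensor $n \otimes x^i \otimes D^{-1}$ (so $n \in M_{2N-2-i}$) and expanding $\partial^{N-1}$ via the quantum binomial formula, the only term contributing to the $x^0 \otimes D^{-1}$ coefficient comes from differentiating $x^i$ exactly $i$ times; after cancellation of $\xi$-factorials it reduces to a nonzero scalar multiple of $\partial^{N-1-i}(n)$. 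Rewriting $\partial^{N-1-i} = \partial\circ\partial^{N-2-i}$ shows this lies in $\partial(M_N)$, and conversely the choice $i = N-2$ with arbitrary $n \in M_N$ produces a nonzero scalar multiple of $\partial(n)$. Hence $\Phi^{-1}(\Int \cdot T'M) = \Im(\partial \colon M_N \to M_{N-1})$.

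Combining the two steps yields $\H_1^H(M) = \H_0^H(T'M) = (T'M)^{coH}/(\Int \cdot T'M) \cong \bigl(\Ker(\partial^{N-1}) \cap M_{N-1}\bigr)/\Im(\partial\colon M_N \to M_{N-1})$, as desired. The main obstacle is keeping track of the Koszul signs $\xi^{|a|}$ in the iterated Leibniz rule and the $\xi$-binomial coefficients $\binom{N-1}{k}_\xi$ produced by $\partial^{N-1}$; however, since $\xi$ is a primitive $N$-th root of unity, every $[j]_\xi$ with $1 \leq j \leq N-1$ is nonzero, so all scalar factors that arise are invertible and only affect the identifications up to nonzero rescaling, leaving the final quotient unchanged.
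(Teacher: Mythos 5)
Your proof is correct and follows essentially the same route as the paper: identify $T'M\cong M\ot \B_{<top}\ot kD^{-1}$, solve the recurrence defining $(T'M)^{coH}$ to obtain $\Ker(\partial^{N-1})\cap M_{N-1}$, and match $\Int\cdot T'M$ with $\Im(\partial:M_N\to M_{N-1})$. The only difference is that you carry out explicitly the verification of the integral's action that the paper leaves to the reader.
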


\begin{rem} From Corollary \ref{coroinj} and the isomorphism 
$\B/\B_{<top}\cong kg^{N-1}$ (notice
$\B/\B_{<top}$ is generated by the class of $x^{N-1}$ and
$\rho_H(x^{N-1})=x^{N-1}\ot g^{N-1}$+lower degree terms)
 it follows that
\[
T'(M)\cong M\ot kg^{-N+1}\ot \B_{<top}
\]
\end{rem}

\begin{proof}
Recall $\B=\bigoplus_{i=0}^{N-1}kx^i$, the structure is given by
\[
|x^i|=i,\ \partial(x^i)=[i]x^{i-1}
\]
\[
\H_ 1(M)=\H_0(T'M)
=\H_0(M\ot kg^{-N+1}\ot (k\oplus kx\oplus kx^2\oplus\cdots \oplus kx^{N-2}))\]
The degree zero part of $T'M$, if $M=\oplus_n M_n$, is
\[
T'(M)_0=\bigoplus_{i=0}^{N-2} M_{N-i-1}\ot g^{-N+1}\ot x^i
\]
A typical element is of the form
\[
\sum_{i=0}^{N-2}m_{i}\ot g^{N-1}\ot x^i
\]
where $|m_i|=-N-i+1$.
The differential has the form
\[
\partial\Big(\sum_{i=0}^{N-2}m_{i}\ot g^{N-1}\ot x^i\Big)=
\sum_{i=0}^{N-2}\partial(m_{i})\ot g^{N-1}\ot x^i+
\sum_{i=0}^{N-2}\xi_N^{-N-i+1+N-1}m_{i}\ot g^{N-1}\ot \partial(x^i)
\]
\[
=
\sum_{i=0}^{N-2}\partial(m_{i})\ot g^{N-1}\ot x^i+
\sum_{i=1}^{N-2}\xi_N^{-i}m_{i}\ot g^{N-1}\ot [i]x^{i-1}
\]
\[
=
\partial(m_{N-2})\ot g^{N-1}\ot x^{N-2}+
\sum_{i=0}^{N-3}
\Big(
\partial(m_{i})+
\xi_N^{-i-1}[i+1]m_{i+1}\Big)\ot g^{N-1}\ot x^{i}
\]
This expression is  equal to zero if and only if
\[
\partial(m_{N-2})=0
\hskip 1cm \hbox{ and } \hskip 1cm 
m_{i}=\frac{-\xi^i_ N}{[i]}\partial(m_{i-1})
\hskip 1cm (i=N-3, N-4,\dots,1)
\]
From the second set of equalities we see that the only parameter is $m_0$,
because $m_i$ is, up to scalar, $\partial^i(m_0)$. The equation
$\partial(m_{N-2})=0$  means $\partial^{N-1}(m_0)=0$.
We conclude
\[
(T'M)^{co H}\cong \Ker(\partial^{N-1})\cap M_{N-1}\]
We leave to the reader to check that, under this bijection,  
$\Int\cdot (M\ot g^{N-1}\ot \B_{<top})$ corresponds to
$\partial(M_{N})$.
\end{proof}

\subsection{The example $k[\Z]\#\Lambda(x,y)$ and Mixed complexes}
Denote
\[
\Lambda(x,y):=k\{x,y\}/(x^2,y^2,xy+yx)
\]
It is not a Hopf algebra in the usual sense, but it is a Hopf algebra in the (signed) 
graded sense. The algebra
\[
k[\Z]\#\Lambda(x,y)=k\{g^{\pm 1},x,y\}/
(
gx=-xg,gy=-yg,0=x^2=y^2=xy+yx)\]
is a Hopf algebra with comultiplication
\[
\Delta g= g\ot g,\
\Delta x=x \ot g+1\ot x,\
\Delta y= y\ot g^{-1}+1\ot y\]
Notice that 
$x$ produce a differential of degree -1, while $y$ produce a differential of degree +1.
This Hopf algebra $H$ is isomorphic to $H_0\#\B(V)$ where $H_0=k[\Z]$ and  
$V=kx\oplus ky\in{}_ {H_0}\YD^{H_0}$.
Writing $\Z$ multiplicatively
$\Z\cong\{ g^n: n\in\Z\}$, the action is given by
\[
gv=-v,\ \forall v\in V\]
and the coaction is determined by
\[
\rho x = x\ot g,\ \rho y = y\ot g^{-1}
\]

\begin{lem}
$\M^H$ identifies with 
objects $(M,d,B)$ where $M$ is a $\Z$-graded vector space, $d$ and $B$ are square zero 
differentials with $|d|=-1$, $|B|=1$, and $dB+Bd=0$. In other words,
$\M^H$ are {\bf mixed complexes}.
\end{lem}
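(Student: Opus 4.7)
The plan is to mimic the $k[\Z]\#k[x]/x^2$ analysis of Section \ref{THEexample}, now with two primitive-like generators: $x$ of $H_0$-weight $g$ and $y$ of $H_0$-weight $g^{-1}$. The Hopf algebra $H$ has $k$-basis $\{g^n,\ g^nx,\ g^ny,\ g^nxy:n\in\Z\}$, and the Hopf projection $\pi:H\to H_0=k[\Z]$ sending $x,y\mapsto 0$ endows every $M\in\M^H$ with a $\Z$-grading $M=\bigoplus_n M_n$ via the $k[\Z]$-coaction $(\id\ot\pi)\circ\rho$; explicitly $M_n:=\{m\in M:(\id\ot\pi)\rho(m)=m\ot g^n\}$.

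For $m\in M_n$ I would expand $\rho(m)$ in the basis above. Applying $\id\ot\id\ot\pi$ to coassociativity collapses it to $\rho(m)\ot g^n$, and since the $H_0$-weights of $g^kx$, $g^ky$, $g^kxy$ are $g^{k+1}$, $g^{k-1}$, $g^k$ respectively, the only surviving exponents are $k=n-1,\,n+1,\,n$. Thus uniquely $\rho(m)=m\ot g^n+d(m)\ot g^{n-1}x+B(m)\ot g^{n+1}y+D(m)\ot g^n xy$ for some $k$-linear maps $d,B,D:M\to M$. Imposing the full coassociativity $(\id\ot\Delta)\rho=(\rho\ot\id)\rho$ and using $\Delta(g^nxy)=g^nxy\ot g^n+g^nx\ot g^{n+1}y-g^ny\ot g^{n-1}x+g^n\ot g^nxy$, the diagonal matches give $d(m)\in M_{n-1}$, $B(m)\in M_{n+1}$, $D(m)\in M_n$ together with $d^2=B^2=0$, while the two cross terms from $\Delta(g^nxy)$ force $dB(m)=D(m)$ and $Bd(m)=-D(m)$, i.e.\ $D=dB$ and $dB+Bd=0$. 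The remaining potential obstructions $D^2,dD,Dd,DB,BD=0$ then follow automatically from these relations.

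Conversely, given a mixed complex $(M,d,B)$ with $d^2=B^2=0$ and $dB+Bd=0$, setting $D:=dB$ and defining $\rho$ by the displayed formula yields a well-defined $H$-comodule: counitality is immediate and coassociativity is exactly the content of those three relations. Morphisms of $H$-comodules correspond to graded linear maps commuting with $d$ and $B$, giving the claimed equivalence of categories. The only nontrivial step is the coassociativity bookkeeping in the middle paragraph; once one spots that the two off-diagonal contributions in $\Delta(g^nxy)$ produce precisely the mixed-complex relation $dB+Bd=0$ and identify $D=dB$, everything else is routine.
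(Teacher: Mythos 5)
Your argument is correct and follows the same route the paper merely sketches: it writes down the same expansion of $\rho(m)$ over the basis $\{g^k,g^kx,g^ky,g^kxy\}$ and extracts $d^2=B^2=0$, $D=dB$, $Bd=-D$ from coassociativity, which is exactly the correspondence the paper states without verification. The only cosmetic difference is that you order the basis as $g^kx$, $g^ky$, $g^kxy$ while the paper uses $xg^k$, $yg^k$, $yxg^k$; since $gx=-xg$ and $gy=-yg$ this changes $d$ and $B$ only by degree-dependent signs and yields an equivalent mixed-complex structure.
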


The proof is straightforward, we only indicate the correspondence: for a mixed
complex $(M,d,B)$, the corresponding right comodule structure
\[
\rho:M\to M\ot H
\]
for an homogeneous $m$,  is
given by
\[
\rho(m)=m\ot g^{|m|} 
+d(m)\ot xg^{|m|-1}  + B(m)\ot yg^{|m|+1}+dB(m)\ot yxg^{|m|}
\]
It is clear that $M^{co H}=M_ 0\cap\Ker d\cap \Ker B$. Also, an easy 
computation  shows (see Example \ref{exmix}(b) of Section 
\ref{exint} for the expression of the integral)
\[
\Int\cdot M= d(B(M_0))=B(d(M_0))\subseteq M^{coH}
\]

So, 
\[
\H^0(M)=
\frac{M_ 0\cap\Ker d\cap \Ker B}
{d(B(M_0))}\]

\begin{rem}
In this (stable) category, the suspension functor is {\em not} the shifting degree in
 general. However, we have the following lemma:
\end{rem}

\begin{lem}$k[x]/x^2$ and $k[y]/y^2$ are $H$-subcomodules of $H$. 
 $\B=\Lambda(x,y)\cong k[x]/x^2\ot k[y]/y^2$ 
as objects in $\M^H$.
For  $M\in \M^H$ denote  $M(x):=M\ot k[x]/x^2$ and
$M(y):=M\ot k[y]/y^2$. The following assertions follows:
\begin{itemize}
\item $M(x)(y)\cong M(y)(x)\cong 
M\ot \Lambda(x,y)$ is an injective object in $\M^H$.
\item $T(M(x))\cong M(x)[-1]$, 
\item $T(M(y))\cong M(y)[1]$
\item $\H_\bullet(M(x))=H_{-\bullet}(M,B)$
\item $\H_\bullet(M(y))=H_{\bullet}(M,d)$
\end{itemize}
\end{lem}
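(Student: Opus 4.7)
The plan is to verify the seven assertions in the order listed. First, $k[x]/x^2$ and $k[y]/y^2$ are $H$-subcomodules of $H$ by direct inspection of $\Delta x = x\ot g + 1\ot x$ and $\Delta y = y\ot g^{-1} + 1 \ot y$; the isomorphism $\B = \Lambda(x,y) \cong k[x]/x^2 \ot k[y]/y^2$ in $\M^H$ then follows by checking $H$-colinearity of the multiplication map $x^a \ot y^b \mapsto x^a y^b$ (the only nontrivial case uses $xy=-yx$ in $\B$). For the first bullet, associativity of the tensor product gives $M(x)(y) \cong M(y)(x) \cong M\ot\Lambda(x,y)$, and injectivity of this object comes from Proposition \ref{propinj} ($\B$ is injective in $\M^H$) combined with Lemma \ref{lemainj} (tensoring with an injective preserves injectivity).

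For the second bullet I use the injective embedding $M(x) \hookrightarrow M(x)(y) = M\ot \Lambda(x,y)$, so that in the stable category $T(M(x)) \cong M\ot(\Lambda(x,y)/k[x]/x^2)$. The quotient has basis $\{\bar y,\overline{xy}\}$; expanding $\Delta(xy) = xy\ot 1 + x\ot gy + y\ot xg^{-1} + 1\ot xy$ and reducing modulo $k[x]/x^2 \ot H$ gives $\rho_H(\bar y) = \bar y \ot g^{-1}$ and $\rho_H(\overline{xy}) = \overline{xy}\ot 1 + \bar y \ot xg^{-1}$. Matching this two-dimensional comodule to $k[x]/x^2 \ot kg^{-1}$ via $\bar y \mapsto 1\ot g^{-1}$, $\overline{xy} \mapsto x\ot g^{-1}$ produces an $H$-colinear isomorphism $\Lambda(x,y)/k[x]/x^2 \cong k[x]/x^2[-1]$, yielding $T(M(x)) \cong M(x)[-1]$. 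The third bullet is the same argument with $x$ and $y$ swapped; since $|y|=-1$ whereas $|x|=1$, the analogous quotient $\Lambda(x,y)/k[y]/y^2$ identifies (up to an overall sign) with $k[y]/y^2 \ot kg = k[y]/y^2[+1]$, forcing the opposite shift $T(M(y)) \cong M(y)[+1]$.

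For the last two bullets it suffices to compute $\H_0$ and then iterate the shifts already established, since $\H_n(M(x)) = \H_0(T'{}^n M(x)) = \H_0(M(x)[n])$, and similarly for $M(y)$. Translating to mixed complexes, the tensor-product formulas give $d(m\ot x) = d(m)\ot x + (-1)^{|m|}m\ot 1$ and $B(m\ot x) = B(m)\ot x$, since $k[x]/x^2$ has $d(x)=1$, $B=0$. Expanding $\rho$ on a generic degree-$0$ element $\alpha = m\ot 1 + n\ot x$ of $M(x)$ (with $|m|=0$, $|n|=-1$) and solving $\rho(\alpha) = \alpha\ot 1$ weight-by-weight collapses to $n = d(m)$ and $B(m) = 0$, so $(M(x))^{coH}$ identifies with $\ker(B|_{M_0})$ via $m\mapsto m\ot 1 + d(m)\ot x$. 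The parallel calculation of $\Int\cdot(m\ot 1) = -dB(m)\ot 1$ and $\Int\cdot(n\ot x) = -B(n)\ot 1 - dB(n)\ot x$ (using $\Int(h)=d_0$ and the standard basis of $H$) then shows that, under this identification, $\Int\cdot M(x)$ becomes $dB(M_0) + B(M_{-1}) = B(M_{-1})$ (the two summands collapse because $dB=-Bd$). Hence $\H_0(M(x)) \cong \ker(B|_{M_0})/B(M_{-1}) = H_0(M,B)$, and iterating the shift gives $\H_n(M(x)) = H_{-n}(M,B)$. The computation for $M(y)$ is symmetric, interchanging the roles of $d$ and $B$ and reversing the shift sign.

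The main obstacle is the sign- and normal-form bookkeeping in the homology calculation: one must rewrite products like $g^{-1}x$ and $yx$ in the standard basis $\{g^n, g^nx, g^ny, g^nxy\}$ via $gx=-xg$ and $xy=-yx$ in order to read off the integral correctly. Equally, the identification $(M(x))^{coH} \cong \ker(B|_{M_0})$ crucially uses the cross-term $d(m)\ot x$; retaining it is what enlarges the coinvariants beyond $(\ker d \cap \ker B \cap M_0)\ot 1$ and is precisely what converts the quotient by $\Int\cdot M(x)$ into the $B$-homology of $M$ rather than a combined kernel.
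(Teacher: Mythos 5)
Your proposal is correct and follows essentially the same route as the paper: identify the quotient $\Lambda(x,y)/(k[x]/x^2)\cong (k[x]/x^2)\ot k\bar y$ with a degree shift to get the suspension formulas, then compute $(M(x))^{coH}$ as the graph of $d$ over $\ker(B|_{M_0})$ and identify $\Int\cdot M(x)$ with the $B$-boundaries. The only divergence is cosmetic sign/indexing bookkeeping (your $B(M_{-1})$ versus the paper's $B(M_1)$, reflecting the paper's own inconsistency between $|B|=+1$ in the statement and the arrows in its diagram), which does not affect the conclusion $\H_0(M(x))\cong H_0(M,B)$.
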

\begin{proof}
The first item follows from the obvious isomorphism
\[
k[x]/x^2\ot k[y]/y^2\cong \Lambda(x,y)
\]
Observe that $kx=(k[x]/x^2)/k=kx\cong k[1]$ and
$ky=(k[y]/y^2)/k=ky\cong k[-1]$. Now
from the short exact sequence
\[
0\to M(x)\to M(x)\ot k[y]/y^2\to M(x)\ot y\to 0\]
we get
\[
0\to M(x)\to M\ot \Lambda(x,y)\to M(x)[-1]\to 0\]
Since $M\ot \Lambda(x,y)$ is injective, we conclude
$T(M(x))\cong M(x)[-1]$. Similarly for $M(y)$.

In order to  compute cohomology we consider first
\[
M(x)^{co H}=M(x)_0\cap\Ker d\cap \Ker B=M(x)_0^{ d,B}\]
We visualize it using the following diagram
\[
\xymatrix{
M_{-1}\ar@/^/[r]^d &M_0\ar@/^/[r]^d\ar@/^/[l]^B &M_1\ar@/^/[r]^d\ar@/^/[l]^B&\cdots\\
\cdots&\ar@{=>}[u]^dM_0\ot x\ar@/^/[r]^d\ar@/^/[l]^B&\ar@{=>}[u]^dM_1\ot x\ar@/^/[r]^d\ar@/^/[l]^B&M_2\ot x\ar@/^/[l]^B&\\
}\]
So, 
\[
M(x)^{d}=\{(m_0, m_1\ot x) : d(m_0)+m_1=0,\ d(m_1)=0\}
\]
\[=
\{(m_0, -dm_0\ot x) : m_0\in M_0\}\cong M_0
\]
\[
M(x)^{d,B}=\{(m_0, -dm_0|x) : Bm_0=0,\ B(-dm_0)=0\}\]
but $B(-dm_0)=dBm_0=0$, so
$M(x)^{d,B} \cong M_0^B$.

We also must compute $B(d(M(x)_0))$:
\[
Bd(m_0,m_1\ot x)
=B(dm_0+m_1,dm_1\ot x)
=(Bdm_0+Bm_1,Bdm_1\ot x)\]\[
=(B(m_1+dm_0),-d(B(m_1+dm_0))\ot x)
=(B\wt m,-d(B\wt m)\ot x)
\]
So, under the isomorphism $M(x)^{d,B}\cong M_0^ B$,
the subspace
$\Int\cdot M(x)=Bd(M(x)_0)$ corresponds to $B(M_1)\subset M_0$.
We conclude $\H^0(M(x))\cong H_ 0(M,B)$. 

Now from the second item we get
\[
\H_n(M(x))=\H_0(T^{-n}(M(x)))=\H_0(M(x)[n])=H_0(M(x)[n])=H_{-n}(M,B)
\]
The parts with $M(y)$ instead of $M(x)$ is completely analogous.
\end{proof}

\begin{coro} For any mixed complex $(M,d,B)$ there are
long exact sequences
\[
\cdots\to  \H_\bullet(M)\to H_{\bullet}(M,d)\to
\H_\bullet(M[-1])\to \H_{\bullet-1}(M)\to\cdots\]
and
\[
\cdots\to \H_\bullet(M)\to H_{-\bullet}(M,B)\to
\H_\bullet(M[1])\to \H_{\bullet-1}(M)\to\cdots\]

\end{coro}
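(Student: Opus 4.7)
The plan is to extract both sequences from Theorem \ref{les} applied to two natural short exact sequences in $\M^H$, both supplied by the preceding lemma.

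For the first sequence, I would start from the short exact sequence $0 \to k \to k[y]/y^2 \to ky \to 0$ in $\M^H$ and tensor with $M$ over $k$, obtaining
$$0 \to M \to M(y) \to M\ot ky \to 0$$
in $\M^H$ (tensor products over $k$ are exact and the comodule structures are compatible because $\ot$ is the tensor product of $\M^H$). The preceding lemma records $ky \cong k[-1]$, hence $M\ot ky \cong M[-1]$. By Lemma \ref{lema1} this short exact sequence corresponds, in the stable category, to a distinguished triangle $M \to M(y) \to M[-1] \to TM$, so Theorem \ref{les} produces the long exact sequence
$$\cdots \to \H_{n+1}(M[-1]) \to \H_n(M) \to \H_n(M(y)) \to \H_n(M[-1]) \to \H_{n-1}(M) \to \cdots$$
Substituting the identification $\H_\bullet(M(y)) \cong H_\bullet(M,d)$ from the preceding lemma yields the first asserted long exact sequence.

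The second sequence follows by the same template, this time starting from $0 \to M \to M(x) \to M\ot kx \to 0$ with $kx \cong k[1]$, and substituting the identification $\H_\bullet(M(x)) \cong H_{-\bullet}(M,B)$. The only bookkeeping here is verifying that the $(-\bullet)$ indexing of the $B$-homology aligns with the indices displayed in the corollary, which is immediate.

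I do not foresee a serious obstacle: the substantive work has already been done in the preceding lemma, which supplies both the identifications $\H_\bullet(M(x)) \cong H_{-\bullet}(M,B)$, $\H_\bullet(M(y)) \cong H_\bullet(M,d)$ and the computation of the quotients $M\ot kx$, $M\ot ky$ as degree shifts. The present corollary is then a direct unpacking of Theorem \ref{les}.
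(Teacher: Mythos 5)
Your proposal is correct and follows essentially the same route as the paper: the paper's proof likewise tensors $M$ with the two short exact sequences $0\to k\to k[y]/y^2\to ky\to 0$ and $0\to k\to k[x]/x^2\to kx\to 0$, identifies the quotients as $M[-1]$ and $M[1]$, passes to triangles, and invokes Theorem \ref{les} together with the identifications $\H_\bullet(M(y))\cong H_\bullet(M,d)$ and $\H_\bullet(M(x))\cong H_{-\bullet}(M,B)$ from the preceding lemma. Your pairing of $M(y)$ with $d$-homology and $M(x)$ with $B$-homology, and of the shifts $ky\cong k[-1]$, $kx\cong k[1]$, matches the paper exactly.
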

\begin{proof}
We consider the short exact sequences in $\M^H$:
\[
0 \to M\to M\ot k[y]/y^2\to M\ot y\to 0
\]
and
\[
0 \to M\to M\ot k[x]/x^2\to M\ot x\to 0
\]
Recall $M\ot y\cong M[-1]$
and  $M\ot x\cong M[1]$.
These short exact sequences in
$\M^H$ gives triangles in
the stable category;  their log exact sequences together with
 the previous Lemma gives the result.
\end{proof}

\begin{coro}
 $H_n(M,d)=0$ $ \forall n \To \H_n(M)=\H_0(M[-n])$;\\
 $H_n(M,B)=0$ $ \forall n \To \H_n(M)=\H_0(M[n])$.
\end{coro}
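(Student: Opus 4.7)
The plan is to iterate the long exact sequences furnished by the preceding Corollary. Under the hypothesis $H_n(M,d)=0$ for every $n$, every third term in the first LES
\[
\cdots\to\H_n(M)\to H_n(M,d)\to\H_n(M[-1])\to\H_{n-1}(M)\to\cdots
\]
vanishes, and exactness forces each connecting morphism to be an isomorphism. This yields $\H_n(M[-1])\cong\H_{n-1}(M)$ for every integer $n$.

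Next I would observe that the vanishing hypothesis is preserved under the grading shift: since the differentials on $M[k]$ coincide with those of $M$ up to reindexing, one has $H_n(M[k],d)=H_{n+k}(M,d)$, still zero for every $n$. The connecting isomorphism above therefore applies verbatim with $M$ replaced by any iterated shift $M[-j]$. Chaining $|n|$ such isomorphisms by induction --- iterating forward through $M,M[-1],M[-2],\ldots$ for one sign of $n$ and backward through $M,M[1],M[2],\ldots$ for the other --- expresses $\H_n(M)$ as $\H_0$ of the appropriate shift of $M$, which is the claimed identity.

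The second statement follows from the entirely parallel argument applied to the second LES of the preceding Corollary, starting from
\[
\cdots\to\H_n(M)\to H_{-n}(M,B)\to\H_n(M[1])\to\H_{n-1}(M)\to\cdots.
\]
Interchanging the roles of $d$ and $B$ also interchanges the shifts $[-1]$ and $[+1]$ featured in the two LESs, which is precisely what produces the opposite sign of the shift in the conclusion. The main (and essentially only) obstacle is the index bookkeeping across the iterated shifts; conceptually the result is immediate once one notes both the degeneration of the LES and the stability of the hypothesis under the shift functor.
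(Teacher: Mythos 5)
Your strategy---degenerate the long exact sequences from the preceding Corollary and iterate the resulting isomorphisms through the shift functor---is exactly the intended argument; the paper states this Corollary with no proof at all because it is meant to follow in precisely this way. The difficulty is that the one step you explicitly defer, the ``index bookkeeping,'' is the step where the argument breaks. From the first LES as you quote it, the degenerate segments
\[
0=H_n(M,d)\to\H_n(M[-1])\to\H_{n-1}(M)\to H_{n-1}(M,d)=0
\]
give $\H_n(M[-1])\cong\H_{n-1}(M)$. Applying this with $M$ replaced by $M[1]$ gives $\H_n(M)\cong\H_{n-1}(M[1])$, and chaining gives $\H_n(M)\cong\H_{n-k}(M[k])$, hence $\H_n(M)\cong\H_0(M[n])$---\emph{not} $\H_0(M[-n])$. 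The same computation applied to the second LES lands on $\H_0(M[-n])$ in the $B$-acyclic case. So the argument, carried out honestly against the sequences you cite, proves the statement with the two shifts interchanged, and the phrase ``which is the claimed identity'' asserts rather than performs the only calculation that matters.

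The mismatch is not entirely your fault: the paper is internally inconsistent on this point, since the proof of Corollary \ref{HC(k)} invokes the $d$-long exact sequence with $M[1]$ in the third slot (which is what the conclusion $\H_n(M)\cong\H_0(M[-n])$ requires), while the Corollary you quote places $M[-1]$ there. A complete proof must resolve this: return to the short exact sequence $0\to M\to M\ot k[y]/y^2\to M\ot y\to 0$, decide once and for all whether $M\ot y$ is $M[-1]$ or $M[1]$ under the convention $kx\cong k[1]$, $ky\cong k[-1]$, write the degenerate isomorphism with that sign, and then chain it with the indices displayed at each step. As written, your proof would ``confirm'' either sign of the shift equally well, which means it confirms neither.
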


Another corollary is the following computation:

\begin{coro}
\label{HC(k)} Considering $k$ as trivial mixed complex concentrated in degree zero,
\[
\H_{\bullet}(k)=
\left\{
\begin{array}{cl}
k&\bullet  = 0 \\
k&\bullet = -1\\
0&\hbox{ otherwise}
\end{array}
\right.\]
\end{coro}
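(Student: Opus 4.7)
The plan is to combine a direct computation of $\H_0(k)$ with the two long exact sequences of the preceding corollary, applied to $M=k$ itself, and then to propagate to the remaining degrees.

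For the base case, $k$ is the trivial mixed complex ($d=B=0$), so $k^{coH}=k$, and the formula $\Int\cdot M=dB(M_0)$ gives $\Int\cdot k=0$. Hence $\H_0(k)=k$ directly.

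For the other degrees I would feed $M=k$ into the two LESs
\begin{align*}
\cdots&\to\H_n(k)\to H_n(k,d)\to\H_n(k[-1])\to\H_{n-1}(k)\to\cdots,\\
\cdots&\to\H_n(k)\to H_{-n}(k,B)\to\H_n(k[1])\to\H_{n-1}(k)\to\cdots,
\end{align*}
and use two basic remarks: (i) both $H_\bullet(k,d)$ and $H_\bullet(k,B)$ are copies of $k$ concentrated in degree $0$, since $k$ is trivial; (ii) $\H_0(k[s])=0$ for every $s\neq 0$, because the degree-zero part of $k[s]$ is empty. Identifying the natural map $\H_0(k)\to H_0(k,\partial)$ as an isomorphism --- by tracking, through the identifications $\H_0(k(y))\cong H_0(k,d)$ and $\H_0(k(x))\cong H_0(k,B)$, that both sides are computed by the single coinvariant $1\in k$ --- and running the six-term pattern around $n=0$ pins down $\H_{\pm 1}(k)$ by exactness.

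To conclude for $|n|\ge 2$ I would iterate: apply the same LESs to the shifted complexes $k[\pm m]$, or equivalently use the concrete finite-dimensional models $T^n k$ and $T'^n k$ obtained from the functorial injective cover $M\hookrightarrow M\otimes \B$ and from $T'M=M\otimes \B_{<\text{top}}$ (Section~\ref{H0B}; here the quantum determinant $D$ is trivial, so no twist enters). In each case the degree-zero part of the resulting finite-dimensional mixed complex is small and explicit, and one checks that any candidate coinvariant is precisely hit by $dB$, so that $\H_0$ vanishes. The main obstacle is the careful bookkeeping of the Koszul signs coming from the tensor-product differentials and the pinning down of the connecting morphisms in the LESs; once these are nailed, the vanishing follows from the cancellation of the rank-one top class of $\B$ against its image under $dB$.
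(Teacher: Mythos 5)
Your overall strategy --- direct computation of $\H_0(k)$, then the two long exact sequences applied to $k$ and its shifts --- is essentially the paper's, and the generic vanishing for $|n|\ge 2$ goes through as you describe: iterating the isomorphisms $\H_{q+1}(k[p\pm1])\cong\H_q(k[p])$, valid whenever the flanking terms $H_\bullet(k[p],d)$ (resp.\ $H_{-\bullet}(k[p],B)$) vanish, reduces everything to $\H_0(k[s])=0$ for $s\neq 0$. You do not need the alternative route via the models $T^nk$, $(T')^nk$ for this part, and that route is in fact harder than you suggest: the degree-zero part of $(\B/k)^{\ot n}$ grows with $n$, and the claim that ``any candidate coinvariant is precisely hit by $dB$'' would itself require an argument rather than a rank-one cancellation.

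The genuine gap is in degree $1$. The six-term patterns around $n=0$ with $M=k$ give you $\H_{-1}(k)=0$ and $\H_1(k[\pm1])=0$ (the latter using that $\H_0(k)\to H_0(k,d)$ and $\H_0(k)\to H_0(k,B)$ are isomorphisms, as you indicate), but the term $\H_1(k)$ itself never appears in those segments, so exactness there cannot ``pin it down.'' To reach $\H_1(k)$ you must run the LES for the \emph{shifted} complex $M=k[1]$, where the relevant segment is $\H_1(k[1])\to H_1(k[1],d)\to\H_1(k)\to\H_0(k[1])=0$; here $H_1(k[1],d)=k$ is \emph{nonzero}, and exactness together with $\H_1(k[1])=0$ forces $\H_1(k)\cong k$. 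The extra copy of $k$ arises precisely at the degree where the flanking homology term fails to vanish, which is invisible from the $n=0$ window. The paper sidesteps this chase by computing $\H_1(k)=\H_0(T'k)$ directly from $T'k=k\oplus kx\oplus ky$ (degree-zero part $k$, with $dB=0$ there), a tool you also mention and which is the cleanest fix. Finally, note the answer is asymmetric: the paper's proof and the remark following it give $\H_1(k)=k$ and $\H_{-1}(k)=0$ (the ``$\bullet=-1$'' in the displayed statement is a typo for ``$\bullet=1$''), so treating ``$\H_{\pm1}(k)$'' as two parallel cases obscures exactly the step where the argument has content.
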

\begin{proof}
Specializing the long exact sequence 
\[
\cdots\to  \H_\bullet(M)\to H_{\bullet}(M,d)\to
\H_\bullet(M[1])\to \H_{\bullet-1}(M)\to H_{\bullet-1}(M,d)\to\cdots\]
at  $M=k[p]$ and $\bullet=q+1$
gives
\[
\cdots\to 
 H_{q+1}(k[p],d)\to\H_{q+1}(k[p+1])\to 
 \H_{q}(k[p])\to H_{q}(k[p],d)\to\cdots
\]
If $p\neq q,q+1$ we have
\[
\H_{q+1}(k[p+1])\cong
 \H_{q}(k[p])
\]
Inductively, for $n\neq 0,1$
\[
\H_n(k)
=\H_n(k[0])\cong 
\H_{n-1}(k[-1])\cong \cdots \cong  \H_{0}(k[-n])=0
\]
because $k[n]$ do not have 0-degree component if $n\neq 0$. It remains to compute
$\H_{0}(k)$. and $\H_1(k)$.

Clearly $\H_0(k)=k$. For $\H_{1}$, since
$\B/\B_{<top}\cong k$ (notice $xy\in\B_{top}$ has degree zero),
the formula for
$T'$ is
\[
T'(k)=k\oplus kx\oplus ky\]
We have
$(T'k)_0=k=(T'k)^{co H}$
and $dB=Bd=0$ in $T'k$, so
$\H_{1}(k)=\H_0(T'k)=k$.
\end{proof}

\begin{rem}
Notice the asymmetry in the gradings, $\H_1(k)=k$ but $\H_{-1}(k)=0$, as we can see from 
the general argument above, or compute directly:
\[T(k)=\B/k=kx\oplus ky\oplus kxy\]
The degree zero component is $kxy$, but $d(xy)\neq 0$ (also $B(xy)\neq 0$),
so $T(k)^{co H}=0$ and
\[
\H_{-1}(k)=\H_0(T(k))=\frac{T(k)^{co H}}{\Int\cdot T(k)}=0
\]

\end{rem}

\section{(De)Categorification: computation of  $K_0$}

\subsection{$K_0$ of exact and triangulated categories}
The main result of this section is Theorem
\ref{teok}, that gives a general presentation of $K_0(\um^H)$.
 We recall the main constructions:

If $\A$ is an {\em exact} category such that the isomorphism classes of objects is a set, then
$K_0(\A)$ is defined as the free abelian group on the set of isomorphism
classes of objects module the relations
\[
[X]+[Z]=[Y]\]
whenever there is a short exact sequence
\[
0\to X\to Y\to Z \to 0
\]
We remark that $[X]+[Y]=[X\oplus Y]$ and, for $n\in\N$, $n[X]=[X^n]$, so, every
element in $K_0(\A)$ can be written in the form $[X]-[Y]$ for some
objects $X,Y$ in $\A$.
For {\em triangulated} categories, $K_0$ is defined similarly, taking
the free abelian group
on isomorphism classes of objects modulo the relations
\[
[X]+[Z]=[Y]\]
whenever there is a triangle
\[
 X\to Y\to Z \to TX
\]

By $K_0(\m^H)$ we understand the K-theory of the category of finite
dimensional $H$-comodules, that is an exact category with usual short 
exact sequences. We denote
$K_0(\um^H) $ the K-theory of  the stable category $\um^H$ as triangulated
category.

Almost by definition, if  $\I$ denotes the full subcategory of injective objects in
 $\m^H$, there is a short exact 
exact sequence of categories
\[
0\to \I\to \m^H\to \um^H\to 0
\]
One could expect a general result in K-theory 
concluding a long exact sequence
ending with
\[
K_0(\I)\to K_0( \m^H)\to K_0( \um)^H\to 0\]
This is actually the case for short exact sequences of {\em exact} categories  where
the left hand side is also a Serre subcategory.  Recall a
Serre (sub)category is closed under quotients, subobjects and 
extensions. In our case, 
$\m^H$ is an exact category but $\I$ is not a Serre subcategory in general.
Also, $\um^H$ is not in general an exact category, it is triangulated, but the other
two are not triangulated.  

 For exact sequences
of Waldhausen
categories there is also a long exact sequence in $K$-theory, 
both $\I$ and $\m^H$ are Waldhausen categories, but it is not clear
that
$\um^H$ is so, in any case one should prove it. Instead, one can prove directly
the following:

\begin{teo}\label{teok}The natural functors $\I\to\m^H$ and  $\m^H\to\um^H$ 
induce  an exact sequence
\[
K_0(\I)\to K_0( \m^H)\to K_0( \um^H)\to 0\]
In particular, the ring
$ K_0( \um^H)$ can be presented as the quotient of
$ K_0( \m^H)$ by the ideal generated by  injective objects.
\end{teo}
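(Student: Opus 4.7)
The plan is to establish the two halves of the exact sequence separately: surjectivity of $\pi\colon K_0(\m^H)\to K_0(\um^H)$, and identification of $\ker\pi$ with the subgroup $J$ generated by classes of injective objects. The final statement about the ideal and the ring quotient will follow from Lemma \ref{lemainj}.

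Surjectivity is essentially formal: the objects of $\um^H$ are literally the objects of $\m^H$, so every generator $[X]$ of $K_0(\um^H)$ is hit by $\pi([X])$. The inclusion $J\subseteq \ker\pi$ is also immediate, since for an injective $I$ the identity $\id_I$ factors through $I$ itself, hence $I\cong 0$ in $\um^H$ and $[I]=0$ in $K_0(\um^H)$.

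The content of the theorem is the reverse inclusion $\ker\pi\subseteq J$, which I will prove by constructing an inverse map $\psi\colon K_0(\um^H)\to K_0(\m^H)/J$ sending $[X]_{\um^H}\mapsto [X]_{\m^H}+J$. For this $\psi$ to be well-defined I need two things. First, that it does not depend on the chosen representative of the isomorphism class in $\um^H$; here I will use the standard consequence of the Frobenius property of $\m^H$ (deducible directly from the definition of $\ul\Hom^H$) that if $X\cong Y$ in $\um^H$ then there exist injectives $I_1,I_2\in\I$ with $X\oplus I_1\cong Y\oplus I_2$ in $\m^H$, so $[X]-[Y]=[I_2]-[I_1]\in J$. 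Second, that the triangle relations are respected: given a distinguished triangle $X\to Y\to Z\to TX$ in $\um^H$, Lemma \ref{lema2} produces a short exact sequence $0\to X'\to Y'\to Z'\to 0$ in $\m^H$ whose image in $\um^H$ is isomorphic to the given triangle, so $[Y']=[X']+[Z']$ in $K_0(\m^H)$, and modulo $J$ the first well-definedness step lets me replace $X',Y',Z'$ by $X,Y,Z$. Then $\psi\circ\pi$ is manifestly the projection $K_0(\m^H)\twoheadrightarrow K_0(\m^H)/J$, so $\ker\pi\subseteq J$.

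Finally, the ring statement: the tensor structure descends to $K_0(\um^H)$ (already noted in the preceding section), and by Lemma \ref{lemainj} the tensor product of any comodule with an injective is again injective, so $[I]\cdot[M]$ and $[M]\cdot[I]$ belong to $J$ whenever $[I]\in J$. Hence $J$ is a two-sided ideal of the ring $K_0(\m^H)$ and $K_0(\um^H)\cong K_0(\m^H)/J$ as rings. The main obstacle, as usual in this kind of localization argument, will be the well-definedness of $\psi$ on isomorphism classes; the key input is the Frobenius-category fact relating stable isomorphism to stable equivalence up to injective summands, which must be verified carefully but is a direct consequence of the definitions together with the coincidence of injectives and projectives in $\m^H$.
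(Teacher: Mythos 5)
Your proof is correct, and it rests on exactly the same two ingredients as the paper's: Lemma \ref{lema2} (every triangle in $\um^H$ is stably isomorphic to a short exact sequence in $\m^H$, which can be taken finite dimensional) and the Frobenius-category fact that $X\cong Y$ in $\um^H$ forces $X\oplus I_1\cong Y\oplus I_2$ in $\m^H$ for some injectives $I_1,I_2$ (a fact the paper also invokes without proof). The difference is architectural. The paper argues directly on the kernel: given $\om=[M]-[N]$ killed by $\pi$, it first replaces $\om$ by a single class $[M\oplus TN]$ modulo injectives, then combines an arbitrary $\Z$-linear combination of triangle relations into a \emph{single} triangle relation using direct sums of triangles and the identity $[TX]=-[X]$, and only then applies Lemma \ref{lema2}. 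You instead build the inverse homomorphism $\psi\colon K_0(\um^H)\to K_0(\m^H)/J$, which handles all relations at once and entirely sidesteps that reduction to a single triangle; the price is that you must check well-definedness of $\psi$ on isomorphism classes, which is precisely where the stable-isomorphism-up-to-injective-summands fact enters (in the paper it enters at the very last step instead). Your route is arguably cleaner and is the standard ``universal property'' way to prove such localization statements; you also supply the one detail the paper omits, namely that $J$ is a two-sided ideal via Lemma \ref{lemainj}. The only point worth making explicit when you write it up: the short exact sequence produced by Lemma \ref{lema2} must be chosen inside $\m^H$ (finite-dimensional injective hulls exist because $H$ is co-Frobenius; this is the remark at the end of that lemma's proof), since otherwise it yields no relation in $K_0(\m^H)$.
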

\begin{proof}
The functor $\m^H\to \um^H$ is the identity on objects, so the
induced map
$K_0(\m^H) \to K_0(\um^H)$ is surjective.
By definition of $\um^H$, the composition $\I\to\m^H\to\um^H$ is zero, so 
the composition $K_0(\I)\to K_0( \m^H)\to K_0( \um)^H$ is zero as well.
Let us denote
\[
K_0(\m^H)/K_ 0(\I):=\frac{K_0(\m^H)}
{
\Im\big(
K_0(\I)\to K_0( \m^H)\big)}
\]
We have a surjective ring homomorphism
\[
K_0(\m^H)/K_ 0(\I)\to K_0(\um^H)
\]
To see injectivity of this map we argue as follows:
Assume $\om=[M]-[N]$ in $K_0(\m^H)$ goes to zero in
 $K_0(\um^H)$. 
From the short exact sequence
\[
0\to N\to I(N)\to TN\to 0\]
in $\m^H$ with $I(N)$ injective, $[TN]=[I(N)]-[N]$ in
 $K_0( \m^H)$, hence
 $[TN]=-[N]$ in $K_0( \m^H)/K_0(\I)$.
 So, we have
\[
\om=[M]-[N]=[M\oplus TN]\ Mod\ \I
\]
Eventually changing $\om=
[M]-[N]$ by $\om':= [M\oplus TN]$, we can assume
that, modulo $\I$, the element $\om$ is equal to $[M]$
for some object $M$.
Now if $[M]$ is zero in
$K_0(\um^H)$, then  there exists integers $n_i$ and triangles
in the stable category
\[
 X_i\to Y_i\to Z_i \to TX_i
\]
such that
\[
[M]=\sum_im_i([X_i]+[Z_i]-[Y_i])
\]
But, using that direct sum of triangles is a triangle, for the positive $m_i$'s
we get
\[
\sum_im_i([X_i]+[Z_i]-[Y_i])=
[\oplus_{m_i>0}X_i^{m_i}]+[\oplus_{m_i>0}Z_i^{m_i}]-[
\oplus_{m_i>0}Y_i^{m_i}]
\]
and similarly for the negative $m_i$'s.
From this, we may assume that there are two triangles
$ X_i\to Y_i\to Z_i \to TX_i$, $i=1,2$ such that
\[
[M]=([X_1]+[Z_1]-[Y_1])
-([X_2]+[Z_2]-[Y_2])
\]
But  because $X\overset{\id}{\to}X\to 0\to TX$ is a triangle, then so is
 $X\to 0\to TX\to TX\to$, hence $[TX]=-[X]$ in $K_0(\um^H)$, and
for $X\to Y\to Z\to TX$ a triangle, we have $TX\to TY\to TZ\to T^2X$ is also a triangle and
\[
-([X]+[Z]-[Y])=[TX]+[TZ]-[TY]
\]
So, we can conclude that there exists a single triangle $X\to Y\to Z\to $ such that
\[
[M]=[X]+[Z]-[Y]
\]
But we know (Lemma \ref{lema2})
that any triangle in the stable category $X\to Y\to Z\to TX$ is isomorphic,
in the stable category, to a short exact
 sequence 
\[
0\to X'\to Y'\to Z'\to 0
\]
Recall also that  $X\cong X'$ in $\um^H$ if and only if 
 there exist injectives $I$ and $J$ such that
$X\oplus I\cong X'\oplus J$ in $\m^H$.
But Modulo $\I$, clearly $[X]=[X]+[I]=[X\oplus I]=[X'\oplus J ]=
[X']+[J]=[X']$.  So, we finally get that
\[
[M]=[X']+[Z']-[Y']\ Mod \ \I
\]
Hence, $[M]$ in $K_0(\m^H)$ is zero Mod $K_0(\I)$.
\end{proof}

\begin{rem}
It could be interesting to know if this is the last part of a long exact sequence for higher 
K-groups.
\end{rem}

\subsection{$K_0$ and the coradical}

Let $H$ be a Hopf algebra and $H_0$ its coradical.
Since $H_0$ is a subcoalgebra, every  $H_0$-comodule is an $H$-comodule.
 Consider the category $\A=\m^H$
and $\BB=\m^{H_0}$; $\BB$  is a  non-empty full subcategory closed under taking
subobjects, quotient objects, and finite products in $\A$.
 Also $\BB$ is an abelian category and the inclusion functor $\BB\to \A$
is exact, so Quillen's theorem gives.

\begin{teo}(\cite{Q}, Theorem 4. (Devissage)) Let
$\BB$ and $\A$ be as above.
 Suppose that every object $M$ of $\A$ has a finite filtration
$0 = M_0\subseteq M_1\subseteq \cdots\subseteq M_n=M$ such that $M_j/M_{j-1}$
is in $\BB$ for each $j$. Then the inclusion $\BB\to \A$ induces an isomorphism
$K_\bullet(\BB)\cong K_\bullet(\A)$
\end{teo}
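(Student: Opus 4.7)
The statement is Quillen's Devissage theorem (Theorem 4 of \cite{Q}) recalled verbatim, so the plan is to outline Quillen's original argument, indicating which parts are routine and which are substantive. I would first verify that the hypotheses are compatible: $\BB$ is a non-empty full abelian subcategory of the abelian category $\A$, and the inclusion is exact. These are explicitly recorded in the paragraph preceding the theorem, so no extra work is needed here. The content of the theorem is then purely $K$-theoretic.

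For $K_0$, surjectivity is straightforward: given $M \in \A$ with filtration $0 = M_0 \subseteq M_1 \subseteq \cdots \subseteq M_n = M$ whose successive quotients lie in $\BB$, the short exact sequences $0 \to M_{j-1} \to M_j \to M_j/M_{j-1} \to 0$ give $[M] = \sum_{j=1}^{n} [M_j/M_{j-1}]$ in $K_0(\A)$, and every term on the right is in the image of $K_0(\BB) \to K_0(\A)$. Injectivity at $K_0$ can be obtained by constructing a retraction $r \colon K_0(\A) \to K_0(\BB)$ sending $[M]$ to $\sum_j [M_j/M_{j-1}]$; for this to be well-defined one needs to check that (i) the sum is independent of the chosen filtration, using a Schreier/Zassenhaus-type refinement argument (any two filtrations admit a common refinement with the same multiset of composition factors in $\BB$), and (ii) the map respects the exact-sequence relations in $\A$, which follows from compatibility of filtrations with short exact sequences.

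For the higher $K_\bullet$ claim, the plan is to invoke Quillen's $Q$-construction and his Theorem A. One forms $QH\BB \to QH\A$ and verifies that the comma categories $y \backslash QH\BB$ for $y \in QH\A$ are contractible, essentially by exhibiting the poset of subobjects of a given $M \in \A$ whose successive filtration quotients lie in $\BB$ as filtered (using the refinement argument above) and then applying a cofinality/filtered-colimit argument to collapse it to a point. The main obstacle — and the reason this is a theorem rather than a formality — is precisely this contractibility statement; for $K_0$ alone, however, the elementary retraction argument above suffices, and that is the only case actually used in Section 5. I would therefore write the proof by giving the retraction argument in full for $K_0$ and referring the reader to \cite{Q} for the higher degrees.
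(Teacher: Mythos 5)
Your proposal is correct, but note that the paper offers no proof of this statement at all: it is quoted verbatim as Quillen's Theorem 4 (Devissage) from \cite{Q}, with the only "work" being the sentence before it, which checks that $\BB=\m^{H_0}$ sits inside $\A=\m^H$ as a non-empty full subcategory closed under subobjects, quotients and finite products, with exact inclusion. So there is no argument in the paper to compare against; what you supply is strictly more. Your elementary treatment of the $K_0$ case is sound: surjectivity from additivity along the filtration, and injectivity via the retraction $[M]\mapsto\sum_j[M_j/M_{j-1}]$, whose well-definedness rests on (i) the Zassenhaus/Schreier refinement argument --- here it is essential that $\BB$ is closed under subobjects and quotients in $\A$, so that the refined subquotients stay in $\BB$ and the identity $[M_j/M_{j-1}]=\sum[\text{refined pieces}]$ holds already in $K_0(\BB)$; this closure is exactly part of the standing hypotheses "as above" --- and (ii) compatibility with short exact sequences, obtained by concatenating a filtration of the sub with the preimage of a filtration of the quotient. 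Since only $K_0$ is used in Section 5 (Corollary \ref{devissage}), your plan of proving the $K_0$ statement in full and deferring the higher $K$-groups (Quillen's $Q$-construction plus Theorem A and the contractibility of the relevant comma categories) to \cite{Q} is a reasonable and self-contained alternative to the paper's bare citation; the only thing the citation buys over your argument is the uniform statement for all $K_\bullet$, which the paper does not actually need.
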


If $M\in\M^H$ is a nonzero comodule,
then its socle $\soc(M)$ is a nonzero subcomodule that is actually an $H_0$-comodule
(see Exercise 3.1.2. of \cite{DNR}, page 117,  its solution on page 140).
If in addition $M$ is finite dimensional, considering $M/\soc(M)$ and induction in 
the dimension of $M$ one can easily define a finite filtration
\[
0 =\soc(M)\subseteq M_2\subseteq \cdots\subseteq M_n=M\]
such that $M_j/M_{j-1}=
\soc(M_j/M_{j-1})$, hence
$M_j/M_{j-1}\in\m^{H_0}$.  Now Quillen's theorem implies the following:

\begin{coro}\label{devissage}
As abelian groups, 
$K_0(\m^H)\cong K_0(\m^{H_0})$. If $H_0$ is a Hopf subalgebra then this isomorphism is also a ring isomorphism.
\end{coro}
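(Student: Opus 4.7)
The plan is to directly apply the Devissage Theorem stated just above with $\A=\m^H$ and $\BB=\m^{H_0}$, and then to promote the resulting additive isomorphism to a ring isomorphism by observing that when $H_0$ is a Hopf subalgebra the inclusion is a symmetric monoidal exact functor.

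First I would verify the hypotheses. That $\m^{H_0}$ is an abelian full subcategory of $\m^H$ closed under subobjects, quotients and finite direct sums (with exact inclusion) is immediate from the definitions, since exactness in either category is tested on underlying vector spaces and an $H_0$-subcomodule/quotient/sum of an $H_0$-comodule is again an $H_0$-comodule. The substantive ingredient is the filtration, which is exactly the socle argument indicated in the paragraph preceding the statement: for any nonzero $M\in\m^H$ the socle $\soc(M)$ is a nonzero subcomodule that lives in $\m^{H_0}$ (since every simple subcoalgebra of $H$ lies in $H_0$ by definition of the coradical), so iterating $M_i\mapsto M_{i+1}=\pi_i^{-1}(\soc(M/M_i))$ with $\pi_i\colon M\twoheadrightarrow M/M_i$ produces, by finite-dimensionality, a finite filtration $0=M_0\subseteq M_1\subseteq\cdots\subseteq M_n=M$ with $M_{i+1}/M_i\in\m^{H_0}$. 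Devissage then yields the desired isomorphism of abelian groups $\iota_*\colon K_0(\m^{H_0})\xrightarrow{\cong} K_0(\m^H)$.

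For the ring statement, suppose $H_0$ is a Hopf subalgebra. Then $\Delta_H(H_0)\subseteq H_0\otimes H_0$, so for $M,N\in\m^{H_0}$ the diagonal $H$-coaction on $M\otimes N$ factors through $H_0\otimes H_0$ and coincides with the tensor structure of $\m^{H_0}$. Hence $\iota\colon\m^{H_0}\hookrightarrow\m^H$ is a symmetric monoidal exact functor, and the induced map $\iota_*$ is automatically a ring homomorphism; combined with the previous paragraph it is therefore a ring isomorphism. The only real obstacle is the socle-filtration step, but that relies only on the defining property of the coradical together with finite-dimensionality, so everything else is formal.
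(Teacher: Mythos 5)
Your proposal is correct and follows the same route as the paper: verify the hypotheses of Quillen's devissage theorem for $\BB=\m^{H_0}\subset\A=\m^H$, build the finite filtration by iterated socles (using that the socle is an $H_0$-comodule because simple subcomodules have simple coefficient coalgebras contained in the coradical), and note that the monoidality of the inclusion when $H_0$ is a Hopf subalgebra upgrades the additive isomorphism to a ring isomorphism. Nothing essential differs from the argument in the text.
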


\subsection{Smash products \label{smash}}

Let $H=H_0\#\B$ where $H_0$ is cosemisimple and $\B$ a finite dimensional 
{\em braided} Hopf algebra in $_{H_0}\YD^{H_0}$. 
 For an element
$M\in \M^H$, denote
$\gr M$ the associated graded with respect to the "socle filtration".
Recall that the assignment 
$[M]\mapsto [\gr M]$ implements  the isomorphism
$K_0(\m^H)\cong K_0(\m^{H_0})$.
If  $\{S_i:i\in I\}$ denote the set of (isomorphism classes of) simple objects in $\M^{H_0}$,
then , for 
$M\in \m^H$,
\[
\gr M\cong \oplus_{i\in I}S_i^{m_i}
\]
for uniques (and finite non zero) multiplicity integers $m_i=m_i(M)$.
We define
\[
[M]_{H_0}:=\sum_i m_i[S_i]=[\gr M] \hskip 1cm \in  K_0(\m^{H_0})=\bigoplus _{i\in I}\Z [S_i]\]
In particular $\B$ is a finite dimensional $H$-comodule, so it makes sense
\[
[\B]_{H_0}\in K_0(\m^{H_0})
\]
In the  case $H_0=k[G]$ with $G$ a group, the  isomorphism classes of 
simple comodules can be  parametrized by $\{kg\}_{g\in G}$ 
 and $kg\ot kh\cong kgh$,  so we identify $K_0(\m^{k[G]})\cong \Z[G]$. 
The  main result of this section is the following:
\begin{teo}\label{teoK0}Let $G$ be a group and $H_0=k[G]$. Assume
$H=H_0\#\B$, with finite dimensional $\B$.
The assignment
$[M]\mapsto [M]_{H_0}$ induces  an isomorphism of rings
\[
K_0(\um^H)\cong \Z[G]/( [\B]_{H_0} )
\]
\end{teo}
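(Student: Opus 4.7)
The plan is to chain together Theorem \ref{teok} and Corollary \ref{devissage} and then explicitly compute the image of the resulting ideal inside $\Z[G]$. By Theorem \ref{teok}, $K_0(\um^H)$ is the quotient of $K_0(\m^H)$ by the ideal generated by classes of finite-dimensional injective comodules. Since $H_0=k[G]$ is a Hopf subalgebra of $H=H_0\#\B$, Corollary \ref{devissage} upgrades to a ring isomorphism $K_0(\m^H)\cong K_0(\m^{H_0})=\Z[G]$ given by $[M]\mapsto [M]_{H_0}=[\gr M]$; multiplicativity here uses that the socle filtration is compatible with tensor products whenever the coradical is a Hopf subalgebra. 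It then suffices to identify the image of $K_0(\I)$ inside $\Z[G]$ with the principal ideal $([\B]_{H_0})$.

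The inclusion of this principal ideal into the image of $K_0(\I)$ is the easy direction: by Proposition \ref{propinj}, $\B\in\m^H$ is injective, and by Lemma \ref{lemainj}(3) the tensor product $M\otimes\B$ is injective for every $M\in\m^H$. Since $[M\otimes\B]_{H_0}=[M]_{H_0}\cdot[\B]_{H_0}$ in $\Z[G]$, every multiple of $[\B]_{H_0}$ by an arbitrary element of $\Z[G]$ is represented by an actual injective comodule.

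For the reverse inclusion, I would first read off from the coproduct formula $\Delta(g\#b)=g\#(b_1)_0\otimes(b_1)_1g\#b_2$ of Section \ref{H0B} the decomposition $H=\bigoplus_{g\in G}(g\#\B)$ as right $H$-comodule. Each summand $g\#\B$ is injective as a direct summand of $H$, and using that $\B$ is a connected graded Hopf algebra (so its coradical is $k$), one computes that the socle of $g\#\B$ is the one-dimensional subcomodule $k(g\#1)\cong kg$. Hence $g\#\B$ is indecomposable and coincides with the injective hull $I(kg)$. Since $H$ is pointed with coradical $k[G]$, its simple comodules are exactly the $\{kg\}_{g\in G}$, and Krull--Schmidt then yields $I\cong\bigoplus_g (g\#\B)^{n_g}$ for any finite-dimensional injective $I$. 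A parallel direct inspection (or multiplicativity applied to $kg\otimes \B\cong g\#\B$) gives $[g\#\B]_{H_0}=[kg]\cdot[\B]_{H_0}$, whence $[I]_{H_0}=\big(\sum_g n_g[kg]\big)\cdot[\B]_{H_0}\in([\B]_{H_0})$.

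The main obstacle is precisely this structural analysis of indecomposable injectives: the socle computation for $g\#\B$ (which relies on $\B$ being a Nichols algebra, hence having trivial coradical) together with the compatibility $[g\#\B]_{H_0}=[kg]\cdot[\B]_{H_0}$ between the socle filtration on $H$ and the bosonization decomposition. Once those are in hand, everything else combines results already stated in the paper in a formal way.
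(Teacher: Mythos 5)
Your proposal is correct and follows the same skeleton as the paper's proof: combine Theorem \ref{teok} with Corollary \ref{devissage} to reduce everything to identifying the image of $K_0(\I)$ inside $\Z[G]$, show the easy inclusion via injectivity of $M\ot\B$, and then prove that every finite dimensional injective is a direct sum of twists of $\B$. Where you genuinely diverge is in how that last classification is established. The paper starts from an arbitrary finite dimensional indecomposable injective $I$, observes that $\soc(I)\cong kg$ is simple, twists to $\wt I = I\ot kg^{-1}$ so that $\soc(\wt I)=k=(\wt I)^{coH}$, and then produces colinear embeddings $\B\hookrightarrow\wt I$ and $\wt I\hookrightarrow\B$ (extending $k\hookrightarrow\B$ and $k\hookrightarrow\wt I$ along injectivity, injective because they are injective on essential socles), concluding $\B\cong\wt I$ by a dimension count. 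You instead decompose $H=\bigoplus_{g\in G}(g\#\B)$ as a right $H$-comodule, compute $\soc(g\#\B)=k(g\#1)\cong kg$ from connectedness of the Nichols algebra, identify $g\#\B$ as the injective hull of $kg$, and invoke the standard fact that a finite dimensional injective is the direct sum of the injective hulls of the simple summands of its socle. Your route is more explicit and avoids the two-way dimension comparison, at the price of having to know that the coradical of $H_0\#\B$ is exactly $k[G]$ (equivalently that $\soc(1\#\B)=k$); note, however, that the paper's argument quietly uses the same fact (injectivity of the extension $\B\to\wt I$ requires injectivity on all of $\soc\B$, not just on $k$), so you are not assuming more than the paper does, and your honest flagging of this as the main obstacle is exactly right. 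One small point worth making explicit in your write-up: the image of $K_0(\I)$ is a priori only the set of classes $\sum_g n_g\,[\B]_{H_0}\cdot g$, i.e.\ a one-sided multiple of $[\B]_{H_0}$; since $M\ot I$ and $I\ot M$ are both injective, this set is automatically a two-sided ideal, which reconciles it with the notation $\Z[G]/([\B]_{H_0})$ when $G$ is nonabelian.
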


\begin{proof}
From Theorem \ref{teok} it follows that
$
K_0(\um^H)\cong 
K_0( \m^H)/K_0(\I)
$.
But from Corollary \ref{devissage} we know
\[
K_0(\m^H)\cong K_0(\m^{H_0})\]
\[
M\mapsto [ \gr M]
\]
For $H_0=k[G]$ we also know $K_0(\m^{k[G]})\cong  \Z[G]$. 
We need to identify $\K_0(\I)$ inside
$K_0(\m^H)\cong \Z[G]$.

Recall that $\B$ is injective and $\B^{co H}=k$. Now let $I$ be a finite dimensional
 indecomposable injective $H$-comodule. Because $I$ is indecomposable and injective, 
$\soc(I)$ is an  indecomposable $H_0$-comodule (here, injectivity of $I$
is essential), hence simple and
\[
\soc (I)\cong kg
\]
for some $g\in G$. Clearly $\wt I:=I\ot kg^{-1}$ is an 
injective indecomposable $H$-comodule and $(\wt I)^{co H}=k$.
Since $\wt I$ is injective, there exist a dashed morphism in the diagram:
\[
\xymatrix{
\ar@{_(->}[d] k\ar@{=}[r]&\soc \wt I\ar@{^(->}[r]&\wt I\\
\B\ar@{-->}
[rru]&\\
}\]
This map restricted to the socle is injective, so the map is injective and we have
$\dim\B\leq\dim \wt I$. But $\B$ is injective, so the same argument
in the opposite direction gives $\dim \wt \I\leq\dim\B$ and so $B\cong \wt I$.
In other words,
\[
I\cong\B\ot kg
\]
for some $g\in G$. We can conclude that if $I$ is finite dimensional
injective (non necessarily indecomposable) $H$-comodule, then there
exists integers $\{m_g: g\in G\}$ with
\[
I\cong\bigoplus_{g\in G}m_g\B\ot kg
\]
That is, $\Im\!(K_0(\I)\to K_0(\m^H))$ is the ideal
generated by $[\B]$.
\end{proof}

\subsection{Examples}
The first two examples are well-known:
\begin{enumerate}
\item  $H=k[\Z]\#k[x]/x^2$ :
\[
K_0(\um^H)\cong K_0(k[\Z])/\langle k[x]/x^2\rangle 
\cong \Z[z^{\pm 1}]/(1+z)\cong \Z
\]
\item (Khovanov)  $H=k[\Z]\#k[x]/x^N$:
\[
K_0(\um^H)\cong K_0(k[\Z])/\langle k[x]/x^N\rangle 
\cong \Z[z^{\pm 1}]/(1+z+\cdots z^{N-1})
\]
If $N$ is is a prime $p$ then
$K_0(\um^H)\cong \Z[\xi_p]$.

\item  $H=k[\Z]\#\Lambda(x,y)$ where $|x|=1$, $|x=-1$, then
\[
\Lambda(x,y)=k\oplus kx\oplus ky\oplus kxy\]
hence
$
[\gr \Lambda(x,y)]=1+z+z^{-1}+1=z^{-1}+2+z=z^{-1}(1+z)^2
$
and so
\[
K_0(\um^H)\cong \Z[z^{\pm 1}]/(1+z)^2
= \Z[z]/(1+z)^2\cong \Z[t]/t^2
\]
\item If $N_1,\dots,N_k\in\N$, for $1\leq i<j\leq k$, a list of
nonzero scalars  $q_{ij}\in k^\times$ is given, then define 
$H$ as the algebra generated by $x_1,\dots,x_k,g_1^{\pm 1},\dots,g_k^{\pm 1}$
with relations
\[
g_ig_j=g_jg_i \hskip 1.5cm (\forall i,j)\]
\[
x_ix_j=q_{ij}x_jx_i \hskip 1cm ( i<j)\]
\[
x_ig_j=q_{ij}g_jx_i \hskip 1cm (i<j)
\]
\[
 g_ix_j=q_{ij}x_jg_i\hskip 1cm (i<j)
\]
\[
x_i^{N_i}=0\hskip 3cm\]
It is a Hopf algebra 
with comultiplication given by
\[
\Delta g_i=g_i\ot g_i\]
\[
\Delta x_i=x_i\ot g_i+1\ot x_i\]
Then $H$ is a Hopf algebra of the form $ k[\Z^k]\#\B$.
The algebra $\B$ has monomial basis
$\{x_1^{n_1}\cdots x_k^{n_k}, 0\leq n_i<N_i\}$, so, 
writing $\Z[\Z^k]=\Z[z_1^{\pm 1},\dots, z_k^{\pm 1}]$, 
\[
[ \B]_{H_0}=\prod_{i=1}^k(1+z_i+z_i^2+\cdots +z_i^{N_i-1})
\]
Hence,
\[
K_0(\um^H)\cong \Z[z_1^{\pm 1},\cdots ,z_k^{\pm 1}]/
\prod_{i=1}^k(1+z_i+\cdots z_i^{N_i-1})
\]
\end{enumerate}

\begin{rem}
It would be interesting to compute $K_0(H_0\#\B)$ for some non pointed
cosemisimple $H_0$, for instance, $H_0=\O(G)$ with $G$ non abelian reductive
affine group.
\end{rem}

\section{$H$-comodule algebras and the category ${}_A\M^H$}
An $H$-comodule algebra $A$ is a $k$-algebra $A$ together with 
an  $H$-comodule structure such that the multiplication map

\[
A\ot A\to A\]
and the unit
\[
k\to A
\]
are $H$-colinear.
Usual examples are:
\begin{itemize}
\item $H=k[G]$: comodule algebra = $G$-graded algebra.
\item $G$ finite, $H=k^G$: comodule algebra = algebra with a  $G$-action
 by ring homomorphisms.
\item $G$ affine group, 
$H=\O(G)$: comodule algebra = algebra with a rational $G$-action.
\item $H=U\g$, comodule algebra =algebra with a $\g$-action acting by derivations.

\end{itemize}

For our purpose, the motivating  example is 
$H=k[\Z]\#k[x]/x^2$. In this case, an $H$-comodule algebra = d.g. algebra.

Also, if $H$ is any Hopf algebra and $A$ is any algebra, then $A$ viewed as trivial
 $H$-comodule is an $H$-comodule algebra.

The main fact for our interest  is the following:
\[
M\in {}_A\M^H, \ V\in\M^H \To M\ot V\in {}_A\M^H
\]
where $A$-module structure in $M\ot V$ is the one coming from $M$
and the $H$-comodule structure is the diagonal one. 
Moreover, if $M$ is finitely generated as $A$-module and $V$ is finite dimensional,
then $A\ot V$ is finitely generated as $A$-module. In this way, the subcategory
of  ${}_A\M^H$ consisting on $A$-finitely 
 generated modules, denoted by
${}_A\m^H$ ,  
 is naturally  a module over the category $\m^H$.
Following \cite{Ko}, we consider the restriction functor
\[
{}_A\M^H\to\M^H
\]
and define $M\in {}_A\M^H\to\M^H$ to be {\em acyclic} (or $H$-acyclic to emphasize
the role of $H$) if
$M$ is injective as $H$-comodule. In other words, if $M\cong 0$ in $\uM^H$.
A map $f:M\to N$ is called {\em quasi-isomorphimsm} (qis) if $f$ becomes
an isomorphism in $\uM^H$. 
Denote $\I_A$ the class of objects in ${}_A\M^H$ that are injective as $H$-comodules.
\begin{ex}\label{exI}
Let $M\in {}_A\M^H$ be an arbitrary object and $I\in\M^H$ an injective
 $H$-comodule. In virtue of Lemma \ref{lemainj}, 
$M\ot I\in \I_A$.
\end{ex}

If $M,N\in{}_A\M^H$, denote
$\I_A(M,N)$ the set of maps that factors through an object in $\I_A$.
The stable category  - or the $H$-derived category-  , denoted by 
${}_A\uM^H$ and also by $\D_H(A)$,
 is defined as the category with same objects as
${}_A\M^H$ but morphism 
\[
\Hom_{\D_H(A)}(M,N):=\frac{\Hom_A^H(M,N)}{\I_A(M,N)}
\]
The subcategory of $\D_H(A)$ whose objects are in ${}_A\m^H$ (i.e.
are finitely generated as $A$-modules) is denoted by $\D^c_H(A)$.

Recall that if $E=E(k)$ is the injective hull of $k$, $E$ is a finite
dimensional injective $H$-comodule (because $H$ is co-Frobenius), and for 
any $M\in{}_A\M^H$, then
\[
M\to M\ot E
 \]
is an embedding of $M$ into an acyclic object in ${}_ A\M^H$. 
If $P:=P(k)$ is a (finite dimensional) projective cover of $k$, then
\[
M\ot P\to M
 \]
is an epimorphism from an $H$-acyclic object in ${}_A\M^H$ into $M$.
If $M$ is finitely generated as $A$-module, then so is $M\ot E$ and $M\ot P$.
The definition of $TM$, of $T'M$ and of  the mapping cone of objects and maps
in ${}_ A\M^H$ (resp. in ${}_ A\m^H$)
actually gives  objects in ${}_ A\M^H$
(resp. in ${}_ A\m^H$). One can easily see that all constructions and proof's
of Happel's Theorem 2.6  in \cite{Ha}, when starting with objects in
in ${}_ A\M^H$
(resp. in ${}_ A\m^H$) always stay 
in ${}_ A\M^H$
(resp. in ${}_ A\m^H$). So
$\D_H(A)$ and $\D^c_H(A)$ are triangulated categories, and by Example
\ref{exI}, they are modules over $\uM^H$ and $\um^H$ respectively.

\begin{rem}
$K_0(\D^c_H(A))$ is a module over the ring $K_0(\um^H)$.
\end{rem}

\begin{ex}
If $A=k$ then $\D_H(k)=\uM^H$ and $\D^c_H(k)=\um^H$.
\end{ex}

\begin{ex} if $H=k[\Z]\#k[x]/x^2$ and $A$ is an ordinary algebra viewed
as trivial $H$-comodule algebra then 
$\D_H(A)=\D(A)$, the (unbounded) derived category of $A$. 
\end{ex}

\begin{ex}
If $A$ is a semisimple Hopf algebra and $H$ is a co-Frobenius Hopf algebra, we view $A$ 
as trivial $H$-comodule algebra, then
\[
{}_A\M^H\cong \M^{A^*\ot H}
\]
Since $A$ is semisimple,  $A^*$ is co-semisimple and $A^*\ot H$ is co-Frobenius.
In this case we have $\D_H(A)=\um^{(A^*\ot H)}$. Also
if $H=k[G]\#\B$ as in Section \ref{smash} then
$K_0(\D_H(A))=K_0(\um^{A^*\ot k[G]\#\B})$ is a quotient of
$K_0(\m^{A^*\ot k[G]})$. Assuming $k$ algebraically closed,
every simple corepresentation of the tensor product
$A^*\ot k[G]$ 
is given by the tensor product of a simple $A^*$-comodule and a simple $k[G]$-comodule, hence
$K_0(\m^{A^*\ot k[G]})= K_0(\m^{A^*}) \ot_\Z \Z[G]=
K_0(A) \ot_\Z \Z[G]$. 
\end{ex}


\subsection*{Enriched Hom}

If $M,N\in{}_A\M^H$, there are several Hom spaces that one can consider.
 We begin with the discussion for d.g. $A$-modules:

If $M$ and $N$ are d.g. A-modules, then one may consider
\begin{itemize}
\item Chain maps: $\Hom_A^H(M,N)$= maps preserving degree and commuting with 
the differential.
\item Chain maps up to homotopy: $\Hom_A^H(M,N)/\sim$, where $f\sim g$ if $f-g=dh+hd$
for some degree +1 $A$-linear map $h$.
\item The HOM complex: $\HOM_A(M,N)=\oplus_{n\in\Z}\HOM_A(M,N)_ n$
where $\HOM_A(M,N)_n$= $A$-linear maps of degree $n$. 
If $A$ is concentrated in degree zero (i.e. $A$ is a trivial $k[\Z]\#k[x]/x^2$-comodule)
then 
$\HOM_A(M,N)_n=\prod_{q\in\Z}\Hom_A(M_q,N_{q+n})$
\item Morphisms in the derived category: $\Hom_{\D_H(A)}(M,N)$.
\end{itemize}

In general $\HOM_A(M,N)$ 
is different from $\Hom_A(M,N)$.
 Assume for simplicity $A$ 
is an ordinary alegbra (i.e. d.g. algebra concentrated in degree zero), 
 if $M$ and $N$ have infinite 
nonzero degrees, then
\[
\Hom_A(M,N)=
\Hom_A(\oplus_p M_p,\oplus_qN_q)
\not \cong \bigoplus_n \Big(\prod_q \Hom_A(M_q,N_{q+n})\Big)\]
For instance, if $M=\oplus _n A[n]$ and $N=A$, then
\[
\Hom_A(\oplus_nA[n],A)\not\cong
\oplus_n \Hom_A(A[n],A)\]
Nevertheles, the set of chain maps agree with $B^0(\HOM_A(M,N))$ and
the set of chain maps up to homotopy is the same as $H_0(\HOM_A(M,N))$.

For general co-Frobenius Hopf alegbras (i.e. not necesariily finite dimensional ones)
one has the same "problems" but also analogous solutions. 
First of all, if $H$ is a (not finite dimensional) Hopf algebra, $A$ an $H$-comodule algebra
and $M,N\in{}_A\M^H$, then $\Hom_A(M,N)$ is not an $H$-comodule in general. For instance,
if $A=k=N$ and $M=H$, then $H^*$ is a not rational $H^*$-module, so 
it is not an $H$-comodule. In this way, if one consider
\[
\Hom_A(M,N)
\]
it is not expectable to have an object in $\M^H$. 

It is not clear to the author how to get an object in $\M^H$ analogous to $\HOM_A$
(maybe $\HOM_A(M,N):=\underset{\to}{\lim}_{ \mu} \Hom_A(M_\mu,N)$,  where $M_\mu$
 runs over all $A$-finitely generated subobjects?).
 To have an object $\HOM_A(M,N)\in\M^H$ would provide
the notion of map up to homotopy just by taking $H_0$.
Nevertheles, we have the following
\begin{prop}
 $\Hom_A(M,N)$ is a (left) $H^*$-module and the definition of $\H_0^H$
can be naturally extenbded to  $H^*$-modules.
\end{prop}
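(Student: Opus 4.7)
The plan has two parallel pieces, one for each assertion of the proposition.

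For the $H^*$-module structure on $\Hom_A(M,N)$: I would start from the observation that every right $H$-comodule $N$ becomes a left $H^*$-module via $\phi\cdot n:=\phi(n_1)n_0$, and define an action on $\Hom_A(M,N)$ by post-composition,
$$
(\phi\cdot f)(m):=\phi\cdot f(m).
$$
Two things must be checked: (i) that this is a left action of $H^*$ with its convolution product, and (ii) that $\phi\cdot f$ is again $A$-linear. Step (i) is a standard unwinding using coassociativity: $(\psi\phi)\cdot n=\psi\cdot(\phi\cdot n)$ for all $n\in N$, and hence for all $f\in\Hom_A(M,N)$. Step (ii) is the heart of the matter; unwinding $\phi\cdot f(am)$ and $a(\phi\cdot f(m))$ using the Hopf-module compatibility $\rho_N(an)=a_0n_0\otimes a_1n_1$, one sees the formula is $A$-linear when $A$ coacts trivially on $H$, and in the general case one must twist the formula (using $S$, or equivalently realize $\Hom_A(M,N)$ as a module over a smash-type algebra built from $A$ and $H^*$) so that the Hopf-module terms on the two sides cancel.

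For extending $\H_0^H$ to an arbitrary left $H^*$-module $N$: I would take the obvious extension of coinvariants
$$
N^{coH}:=\{n\in N:\phi\cdot n=\phi(1)\,n\text{ for all }\phi\in H^*\},
$$
which agrees with the usual notion whenever $N$ is rational. Since $\Int\in H^*$, the subspace $\Int\cdot N$ is well-defined, and the only non-trivial point is the inclusion $\Int\cdot N\subseteq N^{coH}$. For this I would verify the single identity
$$
\phi\Int=\phi(1)\,\Int\qquad\text{in }H^*
$$
valid for all $\phi\in H^*$: evaluating at $h\in H$ and using the defining property $h_1\Int(h_2)=\Int(h)1_H$ of a left integral,
$$
(\phi\Int)(h)=\phi(h_1)\Int(h_2)=\phi\bigl(h_1\Int(h_2)\bigr)=\phi\bigl(\Int(h)1_H\bigr)=\Int(h)\,\phi(1).
$$
Consequently $\phi\cdot(\Int\cdot n)=(\phi\Int)\cdot n=\phi(1)(\Int\cdot n)$, which is the desired coinvariance, and one may define
$$
\H_0^H(N):=N^{coH}/\Int\cdot N
$$
for every left $H^*$-module $N$. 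One then checks functoriality and compatibility: the restriction of this functor along the inclusion $\M^H\hookrightarrow{}_{H^*}\M$ (as rational modules) recovers the $\H_0^H$ defined in Section 2.

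The main obstacle is step (ii) of the first part, namely the $A$-linearity of the $H^*$-action on $\Hom_A(M,N)$: the naive post-composition formula only preserves $A$-linearity when the $H$-coaction on $A$ is trivial, so for a genuine $H$-comodule algebra one must use a more careful formula and then verify that it remains a left action of the convolution algebra $H^*$. The second part, by contrast, is essentially the one-line computation above, and is the reason the extension to $H^*$-modules is the natural receptacle for the homology functor.
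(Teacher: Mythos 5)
Your second assertion is handled correctly and by essentially the paper's route: the convolution identity $\phi\Int=\phi(1)\Int$ in $H^*$, which you verify from $h_1\Int(h_2)=\Int(h)1$, is exactly what makes $\Int\cdot W\subseteq W^{H^*}:=\{w:\phi\cdot w=\phi(1)w\}$ for an arbitrary left $H^*$-module $W$, so that $\H_0^H(W):=W^{H^*}/\Int\cdot W$ is well defined and restricts to the old functor on rational modules. This is a welcome explicit justification of a point the paper leaves implicit.

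The first assertion, however, is left with a genuine gap, and it is larger than the one you flag. You start from post-composition $(\phi\cdot f)(m):=\phi\cdot f(m)$, correctly observe that it fails $A$-linearity when $A$ coacts nontrivially, and then defer the fix to an unspecified ``twist''; but producing and verifying the corrected formula \emph{is} the content of the statement. Worse, post-composition is the wrong action even where it is well defined (e.g.\ $A=k$): its invariants are the maps landing in $N^{coH}$, not the $H$-colinear maps, so the resulting $\H_0^H(\Hom_A(M,N))$ would not have $\Hom_A^H(M,N)$ as numerator, which is the entire point of the construction (``chain maps up to homotopy''). The action must also involve the coaction of $M$, twisted by the antipode. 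The paper's route is to take the adjoint action $(x\cdot f)(m)=x_1f(S(x_2)\cdot m)$, which is available verbatim only when $H$ is finite dimensional (so that $H^*$ is Hopf), rewrite it purely in terms of the two coactions as
\[
(x\cdot f)(m)=x\bigl(f(m_0)_1\,S(m_1)\bigr)\,f(m_0)_0,
\]
and observe that this expression makes sense for a single functional $x\in H^*$ with no finiteness hypothesis; one must then check that it is an action, that it preserves $A$-linearity (using the compatibility $\rho(am)=a_0m_0\ot a_1m_1$ and the antipode axioms), and that $x\cdot f=x(1)f$ for all $x$ precisely when $f$ is colinear, so that $W^{H^*}=\Hom_A^H(M,N)$. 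Your proposal supplies none of these three verifications, so the first half of the proposition remains unproved as written.
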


\begin{proof}
The first statement is probably well-known, for completenes we exhibit the proof:
First recall that if $K$ is a finite dimensional Hopf algebra and $M,N$ are $K$-modules
(e.g. $K=H^*$ if $H$ is finite dimensional and $M,N\in\M^H$) then the 
standard action of an element
$x\in K$ in a map $f$, acting on an alement $m\in M$ is given by
\[
(x\cdot f)(m):=x_1f(S(x_2)\cdot m)
\]
If $K=H^*$ and $M,N\in\M^H$ then the above 
formula is
\[
(x\cdot f)(m)=x_1\cdot f((S(x_2)(m_1)) m_0)
=x_2(S(m_1) )x_1\cdot f(m_0)
=x_1( f(m_0)_1) x_2(S(m_1) )f(m_0)_0
\]
\[=x( f(m_0)_1 S(m_1) )f(m_0)_0
\]
and the last term in the equality make sense for $x \in H^*$, independently on the
dimension of $H$, so one {\em defines } the $H^*$-action of $x\in H^*$ on $f:M\to N$ via
\[
(x\cdot f)(m)
:=x( f(m_0)_1 S(m_1) )f(m_0)_0
\]
In other words,
\[x\cdot f=(1\ot m_H^*(x))(\rho_N\ot 1)(f\ot S)\rho_M\]
One can proof by standard diagramatic methods that this is an action, and $f$ is
$H$-colinear (if and only if it is $H^*$-linear)  if and only if
 \[
x\cdot f =\epsilon(x)f=x(1)f\ \forall x\in H^*
\]
Concering the second statement, if $W$ is an $H^*$-module, one may define
\[
W^{H^*}=\{w\in W : x\cdot w=x(1)w\}\cong\Hom_H(k,W)
\]
If $W$ is a right $H$-comodule then it is clear that 
$W^{co H}=\{w : \rho(w)=w\ot 1\}=W^{H ^*}$, so one can extend the definition of
$\H_0^H$ on ${}_{H^*}\M$  simply by
\[
\H_0^H(W):=\frac{W^{H^*}}{\Int\cdot W}
\]
If $W=\Hom_A(M,N)$ then
$W^{H^*}$=$A$ linear and $H^*$-linear maps =$\Hom_A^H(M,N)$, and a definition
of "chain maps up to homotopy" is available definig
\[
\H_0^H(\Hom_A(M,N))=\frac{\Hom_A^H(M,N)}{\Int\cdot \Hom_A(M,N)}
\]
This recover the definition given in \cite{Qi} for finite dimensional Hopf algebras and when
$M$ and $N$ are $\Z$-graded vector spaces, but we emphasizes that this 
definition makes sense in full generality for $H$ a co-Frobenius algebra 
(whose coradical is not necesarily finite over $k[\Z]$).
\end{proof}

A warning on the notation in \cite{Qi}, we call $\H_0^H$ what he calls $\H$ in 
the ungraded case. He defines $\H_n$ only in the graded case but using the degree shifting,
 and not the triangulated structure, so $\H_n$ in \cite{Qi} is different
from our $\H_n^H$.

\end{document}